\theoremstyle{plain}
\newtheorem*{claim*}{Claim}
\newtheorem{thm}{Theorem}[section]
\newtheorem{cor}[thm]{Corollary}
\newtheorem{lem}[thm]{Lemma}
\newtheorem{prop}[thm]{Proposition}
\theoremstyle{definition}
\newtheorem{defn}[thm]{Definition}
\newtheorem{ex}[thm]{Example}
\newcommand{\ar}{\mbox{$\mathcal{R}$}}
\newcommand{\arn}{\mbox{$\mathcal{R}$}\text{-noetherian}}
\newcommand{\el}{\mbox{$\mathcal{L}$}}
\newcommand{\h}{\mbox{$\mathcal{H}$}}
\newcommand{\dee}{\mbox{$\mathcal{D}$}}
\newcommand{\jay}{\mbox{$\mathcal{J}$}}
\newcommand{\leqr}{\leq_{\mathcal{R}}}
\newcommand{\Z}{\mathbb{Z}}
\newcommand{\N}{\mathbb{N}}
\newcommand{\D}{\Diamond}
\newcommand{\E}{\text{End}}
\newcommand{\SE}{\text{SEnd}}
\newcommand{\M}{\mathcal{M}}
\renewcommand{\P}{\mathcal{P}}
\renewcommand{\S}{\mathcal{S}}
\renewcommand{\a}{\alpha}
\renewcommand{\b}{\beta}
\newcommand{\g}{\gamma}
\newcommand{\f}{\varphi}
\newcommand{\s}{\rtimes_{\varphi}}
\begin{document}
\subjclass[2020]{20M12, 20M10}
\title{\large{The ascending chain condition on principal right ideals for semigroup constructions}}
\author{Craig Miller}
\address{Department of Mathematics, University of York, YO10 5DD, UK\\
craig.miller@york.ac.uk}
\maketitle
\vspace{-1em}
\begin{abstract}
We call a semigroup {\em $\ar$-noetherian} if it satisfies the ascending chain condition on principal right ideals, or, equivalently, the ascending chain condition on $\ar$-classes.  We investigate the behaviour of the property of being $\arn$ under the following standard semigroup-theoretic constructions: semidirect products, Sch{\"u}tzenberger products, free products, Rees matrix semigroups, Brandt extensions, Bruck-Reilly extensions and semilattices of semigroups.
\end{abstract}
~\\
\textit{Keywords}: Semigroup, principal right ideal, ascending chain condition.\\
\textit{Mathematics Subject Classification 2010}: 20M10, 20M12.
\vspace{1em}

\section{\large{Introduction}\nopunct}
\label{sec:intro}

A {\em finiteness condition} for a class of universal algebras is a property that is satisfied by at least all finite members of that class.  The study of finiteness properties was pioneered by Noether in the early 20th century in the context of ascending chain conditions on rings \cite{Noether}, and has become an established theme in many algebraic disciplines.  The main motivation is to develop a better insight into the structure of the objects of study, and, in particular, to get a sense of how different they are to finite objects.

This article is concerned with the class of semigroups and the finiteness condition of satisfying the ascending chain condition on principal right ideals.  We call semigroups satisfying this condition {\em $\ar$-noetherian}\footnote{$\ar$-noetherian semigroups are also known in the literature as `ACCPR-semigroups' or `semigroups satisfying ACCPR'.}, owing to the fact that it is equivalent to the ascending chain condition on $\ar$-classes ($\ar$ is one of the five Green's relations on a semigroup).

The property of being $\ar$-noetherian has a natural analogue in ring theory: the ascending chain condition on principal right ideals of rings.  Indeed, the study of $\ar$-noetherian semigroups was initiated in a paper \cite{Liu} investigating the ascending chain condition on principal ideals of rings of generalised power series.  The article \cite{Mazurek} built on this work by characterising the ascending chain on principal right (and left) ideals for the more general class of skew generalised power series rings $R[[S,\omega]]$ (with coefficients in a ring $R$ and exponents in a strictly totally ordered monoid $S$), and here again the property of $S$ being $\ar$-noetherian is crucial \cite[Theorem 3.3]{Mazurek}.  This work motivated the paper \cite{Stopar}, which considers the ascending chain conditions on principal right and left ideals of semidirect product of semigroups and makes a connection with the corresponding properties for rings of skew generalised power series.

A stronger condition than that of being $\arn$ is the property of satisfying the ascending chain condition on {\em all} right ideals; we call semigroups satisfying this condition {\em weakly right noetherian}\footnote{In the literature, `right noetherian' is the standard name given to semigroups that satisfy the ascending chain condition on right congruences.  However, weakly right noetherian semigroups have occasionally been termed `right noetherian', while right noetherian semigroups have been called `strongly right noetherian'.}.  Such semigroups have received significant attention; see for instance \cite{Aubert,Davvaz,Jespers,Miller:2021}.  The property of being weakly right noetherian can be characterised in terms of principal right ideals: a semigroup $S$ is weakly right noetherian if and only if it is $\arn$ and contains no infinite antichain of principal right ideals (or, equivalently, $S$ contains no infinite strictly ascending chain or infinite antichain of $\ar$-classes) \cite[Theorem 3.2]{Miller:2021}.  Both the properties of being $\arn$ and being weakly right noetherian were considered in the author's recent article \cite{Miller:2023}, of which the main purpose was to study the relationship between a semigroup and its one-sided ideals with respect to each of these properties.

The purpose of the present paper is to investigate the behaviour of the property of being $\arn$ under various semigroup-theoretic contructions.  (Of course, our results will also have left-right duals for the property of being $\el$-noetherian.)  After introducing the necessary preliminary material in Section \ref{sec:prelim}, we consider semidirect products in Section \ref{sec:sdp}, Sch{\"u}tzenberger products in Section \ref{sec:sp}, free products in Section \ref{sec:fp}, Rees matrix semigroups and Brandt extensions in Section \ref{sec:Rees}, Bruck-Reilly extensions in Section \ref{sec:BR}, and semilattices of semigroups in Section \ref{sec:sos}.

\section{\large{Preliminaries}\nopunct}
\label{sec:prelim}

Throughout this section, $S$ will denote a semigroup. 
We denote by $S^1$ the monoid obtained from $S$ by adjoining an identity if necessary (if $S$ is already a monoid, then $S^1=S$), and we denote by $S^0$ the semigroup with zero obtained by adjoining a zero if necessary.

A subset $I\subseteq S$ is said to be a {\em right ideal} of $S$ if $IS\subseteq I.$  Left ideals are defined dually, and an {\em ideal} of $S$ is a subset that is both a right ideal and a left ideal.

A right (resp.\ left) ideal $I$ of $S$ is said to be {\em generated by} $X\subseteq I$ if $I=XS^1$ (resp.\ $I=S^1X$).  A right (resp.\ left) ideal is said to be {\em finitely generated} if it can be generated by a finite set, and {\em principal} if it can be generated by a single element.

Principal (one-sided) ideals determine the five Green's relations on a semigroup: $\ar,$ $\el,$ $\h,$ $\dee$ and $\jay.$  In this paper we are only concerned with the relation $\ar.$
Green's preorder $\leqr$ on $S$ is given by
$$a\leqr b\,\Leftrightarrow\,aS^1\subseteq bS^1,$$
and this leads to the relation $\ar$:
$$a\,\ar\,b\;\Leftrightarrow\;[a\leqr b\text{ and }b\leqr a]\;\Leftrightarrow\;aS^1=bS^1.$$

When we need to distinguish between Green's relation $\ar$ on different semigroups, we will write the semigroup as a subscript, i.e.\ $\ar_S$ for $\ar$ on $S.$  For convenience, we will write $\leq_S$ rather than $\leq_{\mathcal{R}_S}$, and $a<_S b$ if $a\leq_Sb$ but $(a, b)\notin\ar_S.$

Green's pre-order $\leqr$ induces a partial order on the set of $\ar$-classes of $S.$  We note that the poset of $\ar$-classes of $S$ is isomorphic to the poset of principal right ideals of $S$ (under containment).

A poset $P$ is said to satisfy the {\em ascending chain condition} if every ascending chain $$a_1\leq a_2\leq\cdots$$ of elements of $P$ eventually stabilises. 
We say that $S$ is {\em $\ar$-noetherian} if its poset of principal right ideals satisfies the ascending chain condition.
The following result provides a number of equivalent formulations for a semigroup to be $\arn$.

\begin{prop}\cite[Proposition 2.3]{Miller:2023}
\label{prop:equiv_conditions}
The following are equivalent for a semigroup $S$:
\begin{enumerate}
\item $S$ is $\arn$;
\item every non-empty set of principal right ideals of $S$ contains a maximal element;
\item the poset of $\ar$-classes of $S$ satisfies the ascending chain condition;
\item every non-empty set of $\ar$-classes of $S$ contains a maximal element;
\item $S$ contains no infinite strictly ascending chain of elements under the $\ar$-preorder.
\end{enumerate}
\end{prop}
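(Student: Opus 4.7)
The plan is to exploit two elementary structural observations and a general poset lemma. First, as noted in the preliminaries, the map sending $a\in S$ to the principal right ideal $aS^{1}$ factors through the $\ar$-classes and induces an order isomorphism between the poset of principal right ideals of $S$ and the poset of $\ar$-classes of $S$. This isomorphism immediately gives (1)$\Leftrightarrow$(3) and (2)$\Leftrightarrow$(4) with no further work, so the whole proposition reduces to proving the equivalences (3)$\Leftrightarrow$(4) and (3)$\Leftrightarrow$(5).

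The core step is the standard poset lemma: a poset $P$ satisfies the ascending chain condition if and only if every non-empty subset of $P$ has a maximal element. For (4)$\Rightarrow$(3), given an ascending chain $C=\{a_{1}\leq a_{2}\leq\cdots\}$, apply (4) to $C$ itself: a maximal element $a_{n}$ of $C$ forces $a_{m}=a_{n}$ for all $m\geq n$, so the chain stabilises. For (3)$\Rightarrow$(4), argue by contrapositive: if $X\subseteq P$ is non-empty and has no maximal element, then for every $x\in X$ there exists $x'\in X$ with $x<x'$; applying dependent choice produces an infinite strictly ascending chain in $X$, contradicting (3). Applying this lemma to the poset of $\ar$-classes yields (3)$\Leftrightarrow$(4).

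For (3)$\Leftrightarrow$(5), I simply unpack how $\leqr$ on $S$ relates to the induced partial order on $\ar$-classes. A strictly ascending chain $a_{1}<_{S}a_{2}<_{S}\cdots$ in $S$ means $a_{i}\leqr a_{i+1}$ and $(a_{i},a_{i+1})\notin\ar$, which says precisely that the $\ar$-classes $R_{a_{1}},R_{a_{2}},\dots$ form a strictly ascending chain in the poset of $\ar$-classes. Conversely, any strictly ascending chain of $\ar$-classes lifts to a strictly ascending chain in $S$ by choosing one representative from each class. Hence (3) and (5) are equivalent essentially by definition.

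The only non-routine point is the use of dependent choice in the (3)$\Rightarrow$(4) direction of the poset lemma; everything else is either a direct translation along the order isomorphism between principal right ideals and $\ar$-classes, or a definitional unfolding of $\leqr$. I expect no real obstacle, and the proof should be short.
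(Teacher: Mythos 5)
Your proof is correct: the order isomorphism between the poset of principal right ideals and the poset of $\ar$-classes, the standard poset lemma (ACC iff every non-empty subset has a maximal element, with dependent choice in one direction), and the definitional translation between strictly ascending chains of elements under $\leqr$ and strictly ascending chains of $\ar$-classes together establish all five equivalences. Note that the paper itself gives no proof, quoting the result from \cite[Proposition 2.3]{Miller:2023}; your argument is the standard one that such a proof would take, so there is no substantive divergence to report.
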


\begin{cor}\cite[Corollary 2.10]{Miller:2023}
\label{cor:min_right_ideals}
Any semigroup $S$ (with zero) that is a union of (0-)minimal right ideals is $\arn$.  In particular, all completely (0-)simple semigroups and all null semigroups are $\arn$.
\end{cor}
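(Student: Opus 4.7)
The plan is to show that every principal right ideal of $S$ is essentially forced, by the (0-)minimality hypothesis, to be very small, so that no nontrivial ascending chains of principal right ideals can exist. This makes property (1) of Proposition \ref{prop:equiv_conditions} almost immediate.

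First I would verify the following structural observation: for each non-zero $a \in S$, the principal right ideal $aS^1$ coincides with the (unique) (0-)minimal right ideal containing $a$. Indeed, by hypothesis there is a (0-)minimal right ideal $R$ with $a \in R$; since $R$ is a right ideal, $aS^1 \subseteq R$. In the minimal case, $aS^1$ is a nonempty right ideal inside the minimal right ideal $R$, forcing $aS^1 = R$. In the 0-minimal case, $aS^1$ is a right ideal contained in $R$, so it is either $\{0\}$ or $R$; since $a \in aS^1$ and $a \neq 0$, we get $aS^1 = R$. Thus every principal right ideal of $S$ is either $\{0\}$ (in the zero case) or a (0-)minimal right ideal.

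Next I would use this to rule out nontrivial ascending chains. Suppose $a_1 S^1 \subseteq a_2 S^1 \subseteq \cdots$ is an ascending chain. If some $a_i S^1 \neq \{0\}$, let $k$ be the least such index; then $a_k S^1$ is (0-)minimal, and for every $j \geq k$ the ideal $a_j S^1$ contains the nonzero right ideal $a_k S^1$, so by the preceding paragraph $a_j S^1$ is itself (0-)minimal, and (0-)minimality of $a_j S^1$ together with $\{0\} \neq a_k S^1 \subseteq a_j S^1$ forces $a_k S^1 = a_j S^1$. If instead all $a_i S^1 = \{0\}$, the chain is already constant. Either way the chain stabilises, so $S$ is $\ar$-noetherian by Proposition \ref{prop:equiv_conditions}.

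For the ``in particular'' statements, it suffices to exhibit these classes of semigroups as unions of (0-)minimal right ideals. A completely (0-)simple semigroup is, by definition (or by a standard characterisation via primitive idempotents), a union of its (0-)minimal right ideals. For a null semigroup $S$ (necessarily with a zero $0$), for each $a \neq 0$ one has $aS^1 = \{a, 0\}$, which is 0-minimal since its only proper right subideal is $\{0\}$; hence $S = \{0\} \cup \bigcup_{a \neq 0}\{a,0\}$ is a union of 0-minimal right ideals. There is no real obstacle in the argument: the only point requiring care is the bookkeeping between the ``minimal'' and ``0-minimal'' versions, and making sure that in the latter case one rules out $aS^1 = \{0\}$ via $a \neq 0$.
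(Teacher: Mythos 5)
Your proof is correct: the observation that every nonzero principal right ideal $aS^1$ must coincide with the (0-)minimal right ideal containing $a$ forces any ascending chain of principal right ideals to stabilise after at most one nontrivial step, and your verifications that completely (0-)simple semigroups and null semigroups are unions of (0-)minimal right ideals are sound (with only the harmless degenerate case of the one-element null semigroup, which is trivially $\arn$, falling outside the literal hypothesis). The paper itself imports this statement from \cite{Miller:2023} without giving a proof, and your argument is precisely the standard one that the citation stands in for, so it fills the gap correctly rather than diverging from anything in the present text.
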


For any non-empty set $X,$ recall that the {\em free semigroup} on $X,$ denoted by $X^+$, is the set of all words over $X,$ and the {\em free monoid} on $X,$ denoted by $X^*$, is $X^+$ with an identity adjoined.  Clearly, for any $u, v\in X^+$, we have $uX^*\subseteq vX^*$ if and only if $v$ is a subword of $u.$  We deduce that:

\begin{prop}
For any non-empty set $X,$ both $X^+$ and $X^*$ are $\arn.$
\end{prop}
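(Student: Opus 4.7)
The plan is to exploit the observation stated just before the proposition: in $X^+$ we have $uX^*\subseteq vX^*$ if and only if $v$ occurs as a prefix of $u$, i.e.\ $u=vw$ for some $w\in X^*$. Combining this with criterion (5) of Proposition~\ref{prop:equiv_conditions}, it suffices to show that $X^+$ (and $X^*$) admits no infinite strictly ascending chain under $\leqr$.

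First I would take an arbitrary ascending chain $u_1\leqr u_2\leqr\cdots$ in $X^+$. By the prefix characterisation, $u_{i+1}$ is a prefix of $u_i$ for every $i$, so the word-length sequence $|u_1|\geq|u_2|\geq\cdots$ is a weakly decreasing sequence of positive integers. Such a sequence must stabilise, say $|u_n|=|u_N|$ for all $n\geq N$. But if $u_{n+1}$ is a prefix of $u_n$ and they have the same length, then $u_{n+1}=u_n$; hence the chain of elements (and therefore the chain of principal right ideals) stabilises, proving that $X^+$ is $\arn$.

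For $X^*$ the only extra point is the empty word $\varepsilon$, which is the identity and satisfies $\varepsilon X^*=X^*$, so it is the top element of the poset of principal right ideals. If some $u_i$ equals $\varepsilon$, the chain has already reached the top; otherwise every $u_i$ lies in $X^+$ and the previous argument applies verbatim.

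The main obstacle: essentially none. The proposition is really just a codification of the prefix observation together with the well-ordering of $\N$ via word length; no delicate structural argument is required.
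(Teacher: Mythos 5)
Your proof is correct and is essentially the paper's own argument: the paper deduces the proposition directly from the observation that $uX^*\subseteq vX^*$ forces $v$ to be a prefix (the paper says ``subword'') of $u$, so word lengths along an ascending chain weakly decrease, stabilise, and prefix plus equal length forces equality. You have simply spelled out this deduction, including the routine handling of the empty word in $X^*$, so there is nothing to add.
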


Since every semigroup is the quotient of a free semigroup, and there certainly exist semigroups that are not $\arn,$ the property of being $\arn$ is not closed under quotients.  It turns out, however, that this property is closed under Rees quotients:

\begin{lem}\cite[Corollary 3.4]{Miller:2023}
Let $S$ be a semigroup and let $I$ be an ideal of $S.$  If $S$ is $\arn$ then so is $S/I.$
\end{lem}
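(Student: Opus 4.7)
The plan is to prove the contrapositive via Proposition \ref{prop:equiv_conditions}(5): assume $S/I$ admits an infinite strictly ascending chain under its $\ar$-preorder, and lift it to such a chain in $S,$ contradicting the assumption that $S$ is $\arn.$

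So suppose $[a_1]<_{S/I}[a_2]<_{S/I}\cdots$ is such a chain in $S/I.$ The zero of $S/I$ is the $\ar$-minimum of $S/I$ (since $0\cdot (S/I)^1=\{0\}$), so strictness forces at most one term (namely $[a_1]$) to equal $0$; dropping it if necessary, I may assume every $a_i\in S\setminus I.$ My claim is that the sequence $(a_i)$ itself gives a strictly ascending chain under $\leq_S.$

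For the weak inequality $a_i\leq_S a_{i+1}$: the relation $[a_i]\leq_{S/I}[a_{i+1}]$ provides $[s]\in (S/I)^1$ with $[a_i]=[a_{i+1}][s].$ If $[s]=1$ (the adjoined identity), then $a_i=a_{i+1}$ in $S\setminus I.$ Otherwise $[s]=[t]$ for some $t\in S$ with $[a_{i+1}t]=[a_i]\neq 0,$ so $a_{i+1}t\notin I$ and hence $a_{i+1}t=a_i$ in $S,$ giving $a_i\leq_S a_{i+1}.$ For strictness, suppose toward contradiction that $a_i\,\ar_S\,a_{i+1},$ so $a_{i+1}=a_it$ for some $t\in S^1.$ Since $I$ is an ideal and $a_{i+1}\notin I,$ we have $t\notin I,$ so $[t]$ makes sense in $(S/I)^1$ and $[a_{i+1}]=[a_i][t],$ yielding $[a_{i+1}]\leq_{S/I}[a_i]$ and hence $[a_i]\,\ar_{S/I}\,[a_{i+1}],$ contradicting strictness. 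Thus $a_i<_S a_{i+1}$ for every $i,$ producing the desired forbidden chain in $S.$

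The argument is essentially bookkeeping; the only subtlety is handling the adjoined identity of $(S/I)^1$ versus an honest element of $S/I,$ and confirming that any $t\in S^1$ witnessing $\ar_S$-equivalence automatically lies outside $I$ because $I$ absorbs on the right. Once these small points are dispatched, no further obstacles arise.
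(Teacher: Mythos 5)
Your proof is correct. The paper states this lemma only as a citation to \cite[Corollary 3.4]{Miller:2023} and includes no proof of its own, so there is nothing in-text to compare against; your chain-lifting argument is the natural direct proof and all the delicate points are handled properly: since $0\cdot(S/I)^1=\{0\}$, the only element $\leq_{S/I} 0$ is $0$ itself, so $0$ can occur only as the first term of a strictly ascending chain; products of nonzero classes in the Rees quotient agree with the corresponding products in $S$ whenever the result is nonzero, which legitimises both the lift $a_i=a_{i+1}t$ and the strictness step; and the witness $t$ of a putative $\ar_S$-equivalence indeed lies outside $I$ because $I$ absorbs on the left. Combined with Proposition \ref{prop:equiv_conditions}(5), this gives the contrapositive exactly as claimed.
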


The property of being $\arn$ is also inherited by one-sided ideals:

\begin{prop}\cite[Corollary 3.2]{Miller:2023}
\label{prop:ideal}
Let $S$ be a semigroup and let $I$ be a right/left/two-sided ideal of $S.$  If $S$ is $\arn$ then so is $I.$
\end{prop}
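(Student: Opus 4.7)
I would argue by contradiction via chain lifting. Suppose $I$ is not $\arn$ and fix a strict ascending chain $a_1 I^1 \subsetneq a_2 I^1 \subsetneq \cdots$ of principal right ideals in $I$; for each $i$ pick $u_i \in I$ with $a_i = a_{i+1} u_i$. Since $I^1 \subseteq S^1$, this shows $a_i \in a_{i+1} S^1$, so the same sequence yields an ascending chain of principal right ideals in $S$. By hypothesis this stabilises, giving an $N$ with $a_n\,\ar_S\,a_N$ for every $n \geq N$; my goal is then to upgrade this to $a_n\,\ar_I\,a_N$, contradicting strictness in $I$.

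For $n > N$, use $a_n \neq a_N$ and $a_N \in a_n I^1$ to write $a_n = a_N s$ with $s \in S$ and $a_N = a_n u$ with $u \in I$. Substitution gives $a_N = a_N(su)$, so $su$ is a right identity for $a_N$. When $I$ is two-sided we have $su \in SI \subseteq I$, whence
$$a_n = a_N s = a_N(su)s = a_N(sus),$$
with $sus = (su)s \in IS \subseteq I$, so $a_n \in a_N I \subseteq a_N I^1$, as required. The left-ideal case follows by the analogous $\el$-dual argument.

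The case I expect to be the main obstacle is that of a right (or left) ideal which is not two-sided, since then $su \in SI$ need not lie in $I$ and the manipulation above collapses. To handle this I would first observe that any $\ar_S$-class meeting $I$ is entirely contained in $I$: if $a \in R \cap I$ then $R \subseteq aS^1 \subseteq I$, using that $I$ is a right ideal. Passing to a tail of the chain, I may therefore assume that all $a_n$ for $n \geq N$ lie in a single $\ar_S$-class $R \subseteq I$, and the problem reduces to the internal claim that $\leq_I$ has ACC on $R$. This reduction is the delicate part, because $\ar_I$ can be strictly finer than $\ar_S$ restricted to $I$ (for example, each $\ar_S$-class of the right ideal $\{q^i p^j : i \geq 1\}$ of the bicyclic monoid splits into two $\ar_I$-classes). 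The extra structure I would exploit is that the identity $a_n(us) = a_n$ holds with $us \in IS \subseteq I$, so $a_n$ possesses a right identity inside $I$; this, combined with a combinatorial argument iterating the witnesses $u_i$ of the chain, should close the gap.
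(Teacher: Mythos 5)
Your two-sided case is correct, but the proposal as a whole has a genuine gap: neither one-sided case is actually proved. First, the left-ideal case does \emph{not} ``follow by the analogous $\el$-dual argument'': dualising swaps $\ar$ with $\el$ throughout, so the dual of your two-sided argument is a statement about $\el$-noetherian semigroups and left ideals, not the statement ``$S$ $\arn$ implies every left ideal of $S$ is $\arn$.'' Moreover, the literal analogue of your computation breaks down there: for a left ideal you do get $su\in SI\subseteq I$, but the final step needs $(su)s\in IS\subseteq I$, which is precisely the inclusion a left ideal lacks (your own next paragraph tacitly concedes this by lumping ``right (or left)'' ideals together as the obstacle). Second, for the right-ideal case --- which you correctly identify as the crux, and your bicyclic example showing that $\ar_I$ can strictly refine $\ar_S$ on $I$ is accurate --- you end with ``a combinatorial argument iterating the witnesses \ldots should close the gap,'' which is a hope rather than a proof. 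Since the paper itself imports this proposition from \cite[Corollary 3.2]{Miller:2023} without reproducing a proof, the proposal has to stand on its own, and as written it establishes only the two-sided case.

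The missing step is, in fact, a short composition of the witnesses you already have. After stabilising the chain in $S$, for all $n\geq N$ you have down-witnesses $u_n\in I$ with $a_n=a_{n+1}u_n$ (strictness of the chain in $I$ forces $u_n\in I$ rather than the adjoined identity) and up-witnesses $s_n\in S$ with $a_{n+1}=a_ns_n$ (strictness rules out $s_n=1$). If $I$ is a right ideal, then for any $n\geq N+1$,
$$a_n(u_{n-1}s_{n-1}s_n)=(a_nu_{n-1})s_{n-1}s_n=a_{n-1}s_{n-1}s_n=a_ns_n=a_{n+1},$$
and $u_{n-1}s_{n-1}s_n\in IS\subseteq I$, so $a_{n+1}\in a_nI$, contradicting $a_nI^1\subsetneq a_{n+1}I^1$. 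If $I$ is a left ideal, use instead
$$a_n(s_ns_{n+1}u_{n+1})=a_{n+1}s_{n+1}u_{n+1}=a_{n+2}u_{n+1}=a_{n+1},$$
where $s_ns_{n+1}u_{n+1}\in SI\subseteq I$, giving the same contradiction. So your instinct to iterate the chain witnesses was the right one, but the actual closing move --- ``down--up--up'' for right ideals, ``up--up--down'' for left ideals --- is absent, and it is exactly this step (not the local-right-identity observation, which is true but never used decisively) that the argument needs.
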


Let $T$ be a subsemigroup of $S.$  
We say that $T$ is {\em $\ar$-preserving} (in $S$) if the $\ar_T$-preorder is the restriction of the $\ar_S$-preorder to $T$; that is,
$$\leq_T~=~\leq_S\cap~(T\times T).$$
The property of being $\arn$ is inherited by $\ar$-preserving subsemigroups:

\begin{prop}
\label{prop:R-preserving}
Let $S$ be a semigroup and let $T$ be an $\ar$-preserving subsemigroup of $S.$  If $S$ is $\arn$ then so is $T.$
\end{prop}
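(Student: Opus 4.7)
The plan is to argue by contrapositive, using the equivalent formulation of being $\arn$ given in Proposition \ref{prop:equiv_conditions}(5): the absence of an infinite strictly ascending chain under the $\ar$-preorder. So I would assume that $T$ is \emph{not} $\arn$ and produce such an infinite chain in $S$, contradicting that $S$ is $\arn$.

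Concretely, suppose there is an infinite strictly ascending chain $a_1 <_T a_2 <_T \cdots$ in $T$. For each $i$, the relation $a_i <_T a_{i+1}$ unpacks as $a_i \leq_T a_{i+1}$ together with $a_{i+1} \not\leq_T a_i$. Now I invoke the defining hypothesis $\leq_T = \leq_S \cap (T \times T)$: the first part gives $a_i \leq_S a_{i+1}$, while the second part gives $a_{i+1} \not\leq_S a_i$ (since $a_i, a_{i+1} \in T$, a failure of $\leq_S$ restricted to $T$ is a failure of $\leq_S$ on these elements). Combining, $a_i <_S a_{i+1}$ for every $i$, yielding an infinite strictly ascending $\leqr$-chain in $S$, which contradicts Proposition \ref{prop:equiv_conditions}(5) applied to $S$.

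There is no real obstacle here: the $\ar$-preserving hypothesis is exactly strong enough to transport both directions of strict comparison from $T$ into $S$, and the characterisation (5) from Proposition \ref{prop:equiv_conditions} lets me work directly with elements rather than with principal right ideals (which would otherwise require comparing $aT^1$ with $aS^1$, and those need not coincide in general). The whole argument is therefore a one-line translation once the right reformulation is in hand.
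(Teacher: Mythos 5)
Your proof is correct and is essentially the paper's argument recast in contrapositive form: where the paper pushes an ascending chain from $T$ into $S$ via $\leq_T\,\subseteq\,\leq_S$ and then uses the reverse inclusion $\leq_S\cap\,(T\times T)\subseteq\,\leq_T$ to pull the stabilisation $a_n\,\ar_S\,a_N$ back to $a_n\,\ar_T\,a_N$, you use the same two inclusions to transport strictness of the comparisons instead, appealing to Proposition \ref{prop:equiv_conditions}(5). There is no gap, and your remark about why one works with the preorder on elements rather than with $aT^1$ versus $aS^1$ is exactly the right observation.
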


\begin{proof}
Consider an ascending chain $$a_1\leq_Ta_2\leq_T\cdots$$ in $T.$  Then clearly we have an ascending chain $$a_1\leq_Sa_2\leq_S\cdots$$ in $S.$  Since $S$ is $\arn$, there exists $N\in\N$ such that $a_n\,\ar_S\,a_N$ for all $n\geq N.$  Then, since $T$ is $\ar$-preserving in $S,$ we have $a_n\,\ar_T\,a_N$ for all $n\geq N.$  Hence $T$ is $\arn$.
\end{proof}

A semigroup $T$ is called {\em regular} if for every $a\in T$ there exists $b\in T$ such that $aba=a.$  It is well known that regular subsemigroups are $\ar$-preserving; see \cite[paragraph before Corollary 4.7]{Miller:2021} for a proof.  Thus we have:

\begin{cor}
\label{cor:reg}
Let $S$ be a semigroup with a regular subsemigroup $T.$  If $S$ is $\arn$ then so is $T.$
\end{cor}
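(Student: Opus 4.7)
The plan is very short: this is a one-line corollary of Proposition \ref{prop:R-preserving}. The text immediately preceding the corollary already asserts (with a citation) that every regular subsemigroup is $\ar$-preserving, so the only thing to do is combine these two facts.

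More explicitly, I would argue as follows. Let $T$ be a regular subsemigroup of $S$. By the cited fact, the preorder $\leq_T$ coincides with the restriction of $\leq_S$ to $T\times T$, i.e.\ $T$ is $\ar$-preserving in $S$. Now apply Proposition \ref{prop:R-preserving}: since $S$ is assumed $\arn$ and $T$ is $\ar$-preserving, $T$ is $\arn$.

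If one wanted to avoid the citation and prove the $\ar$-preserving step in place, the argument is essentially trivial. The inclusion $\leq_T\,\subseteq\,\leq_S$ holds for any subsemigroup. For the reverse, given $a,b\in T$ with $a\leq_S b$, write $a=bs$ for some $s\in S^1$ (the case $a=b$ being immediate), pick $b'\in T$ with $bb'b=b$ (which exists by regularity of $T$), and observe that $a=bs=bb'bs=b(b'a)$ with $b'a\in T$. Hence $a\leq_T b$.

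There is no real obstacle here; the substantive work has already been carried out in Proposition \ref{prop:R-preserving} and in the cited proof that regular subsemigroups are $\ar$-preserving. The corollary is essentially packaging.
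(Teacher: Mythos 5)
Your proposal is correct and matches the paper exactly: the paper derives this corollary by citing the known fact that regular subsemigroups are $\ar$-preserving and then applying Proposition \ref{prop:R-preserving}, with no further argument. Your optional inline verification of the $\ar$-preserving step (writing $a=bs=b(b'a)$ with $bb'b=b$) is also correct and is precisely the standard argument the paper points to in the cited reference.
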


A subsemigroup $T$ of $S$ is called {\em right unitary} (in $S$) if it satisfies the following condition: for all $a\in T$ and $b\in S,$ if $ab\in T$ then $b\in T.$

Clearly a right unitary subsemigroup is $\ar$-preserving, so by Proposition \ref{prop:R-preserving} we have:

\begin{cor}
\label{cor:ru}
Let $S$ be a semigroup and let $T$ be a right unitary subsemigroup of $S.$
If $S$ is $\arn$ then so is $T.$
\end{cor}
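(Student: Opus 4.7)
The plan is to reduce this to Proposition \ref{prop:R-preserving} by verifying that every right unitary subsemigroup is $\ar$-preserving; once that is done, the corollary is immediate.

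First I would fix $a,b\in T$ and show the non-trivial inclusion $\leq_S\cap(T\times T)\subseteq\,\leq_T$. Suppose $a\leq_S b$, so that $a\in bS^1$. Either $a=b$, in which case trivially $a\in bT^1$ and hence $a\leq_T b$; or $a=bs$ for some $s\in S$. In the second case, $b\in T$ and $bs=a\in T$, so the right unitarity of $T$ in $S$ yields $s\in T$, whence $a=bs\in bT\subseteq bT^1$ and again $a\leq_Tb$. The reverse inclusion $\leq_T\,\subseteq\,\leq_S\cap(T\times T)$ is immediate from $T\subseteq S$ and $bT^1\subseteq bS^1$. So $T$ is $\ar$-preserving in $S$.

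Having established that, I would simply invoke Proposition \ref{prop:R-preserving} to conclude that if $S$ is $\arn$ then so is $T$.

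There is essentially no obstacle here: the only point that needs any care is the handling of the adjoined identity when moving between $S^1$ and $T^1$, and this is cleanly dispatched by splitting into the cases $a=b$ and $a=bs$ with $s\in S$, so that the right unitarity hypothesis applies exactly when it is needed.
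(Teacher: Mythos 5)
Your proposal is correct and follows exactly the paper's route: the paper also deduces the corollary from Proposition \ref{prop:R-preserving}, noting (without detail) that a right unitary subsemigroup is $\ar$-preserving. Your case analysis ($a=b$ versus $a=bs$ with $s\in S$) simply fills in the verification the paper labels as clear, and it is correct.
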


If the complement of a subsemigroup is a left ideal, then the subsemigroup is right unitary, so we have:

\begin{cor}
\label{cor:comp_left_ideal}
Let $S$ be a semigroup with a subsemigroup $T$ such that $S\!\setminus\!T$ is a left ideal of $S.$  If $S$ is $\arn$ then so is $T.$
\end{cor}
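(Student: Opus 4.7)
The plan is to prove this as a short consequence of Corollary \ref{cor:ru}, by checking that the hypothesis on $S\setminus T$ forces $T$ to be right unitary in $S$.

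First I would verify the right unitary condition directly. Take $a\in T$ and $b\in S$ with $ab\in T$. The goal is to show $b\in T$. Suppose for a contradiction that $b\notin T$, so $b\in S\setminus T$. Since $S\setminus T$ is a left ideal of $S$, we have $ab\in S(S\setminus T)\subseteq S\setminus T$, contradicting $ab\in T$. Hence $b\in T$, so $T$ is right unitary in $S$.

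Having established that $T$ is right unitary, the conclusion is immediate from Corollary \ref{cor:ru}: if $S$ is $\arn$ then so is $T$.

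There is no real obstacle here; the statement is essentially a restatement of the remark preceding it, and the only content is the one-line verification that a subsemigroup whose complement is a left ideal is necessarily right unitary. One small point worth flagging is that the argument implicitly uses $a\in T\subseteq S$ so that left multiplication by $a$ lands in the left ideal $S\setminus T$; this is automatic and requires no extra hypothesis.
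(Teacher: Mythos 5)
Your proof is correct and is exactly the argument the paper intends: the paper derives this corollary in one line by noting that a subsemigroup whose complement is a left ideal is right unitary, and then citing Corollary \ref{cor:ru}. Your contribution is simply to write out that one-line verification explicitly (if $b\notin T$ then $ab\in S(S\!\setminus\!T)\subseteq S\!\setminus\!T$, contradicting $ab\in T$), which is accurate and complete.
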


We now a introduce a key notion for this paper. 

\begin{defn}
Let $S$ be a semigroup and let $a\in S.$  We say that $b\in S$ is a {\em local right identity} of $a$ if $a=ab.$
\end{defn}

Clearly in a monoid or regular semigroup, every element has a local right identity.  On the other hand, any left cancellative, idempotent-free semigroup (e.g.\ a free semigroup) has {\em no} element with a local right identity.  Note that a semigroup in which no element has a local right identity is $\ar$-trivial, i.e.\ $\ar$ is the identity relation.

\begin{prop}
\label{prop:nolri}
Let $S$ and $T$ be semigroups with a map $\theta : S\to T$ such that $(aS)\theta\subseteq(a\theta)T$ for each $a\in S.$  If $T$ is $\arn$ and has no element with a local right identity, then $S$ is $\arn$ and has no element with a local right identity.
\end{prop}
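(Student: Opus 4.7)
The map $\theta$ is the only bridge between $S$ and $T,$ and the compatibility $(aS)\theta\subseteq(a\theta)T$ lets me push multiplicative data from $S$ to $T.$  The key observation driving the proof is that a semigroup with no element having a local right identity is automatically $\ar$-trivial (as noted just before the statement).  So in $T,$ Green's $\ar$ is the identity relation, and any ascending $\ar$-chain that stabilises in $T$ must actually become pointwise constant.  The plan is: (i) show $S$ also has no element with a local right identity, and then (ii) lift an ascending chain from $S$ to $T,$ use $\arn$-ness there, and pull the resulting equality back to $S.$

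\smallskip

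For (i), suppose for contradiction that $a=ab$ for some $a,b\in S.$  Then $a\theta=(ab)\theta\in(aS)\theta\subseteq(a\theta)T,$ so $a\theta=(a\theta)t$ for some $t\in T,$ contradicting the hypothesis on $T.$

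\smallskip

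For (ii), take an ascending chain $a_1\leq_S a_2\leq_S\cdots$ in $S.$  For each $i,$ $a_i\in a_{i+1}S^1,$ so either $a_i=a_{i+1}$ or $a_i=a_{i+1}s_i$ for some $s_i\in S,$ and in the latter case $a_i\theta\in(a_{i+1}S)\theta\subseteq(a_{i+1}\theta)T.$  In both cases $a_i\theta\leq_T a_{i+1}\theta,$ so we obtain an ascending chain in $T.$  Since $T$ is $\arn,$ there exists $N\in\N$ with $a_n\theta\,\ar_T\,a_N\theta$ for all $n\geq N$; and since $\ar_T$ is the identity relation, this means $a_n\theta=a_N\theta$ for all $n\geq N.$

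\smallskip

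The one step requiring care — and it is really the only non-trivial point — is transferring this equality back to $S.$  I expect this to be the main obstacle only in the sense that one must not confuse the two sides: having $a_n\theta=a_{n+1}\theta$ does not in itself force $a_n=a_{n+1}$ in $S.$  The argument is: if $a_n\neq a_{n+1}$ then $a_n=a_{n+1}s$ for some $s\in S,$ and then $a_n\theta=(a_{n+1}\theta)t$ for some $t\in T,$ giving $a_{n+1}\theta=(a_{n+1}\theta)t,$ which contradicts the absence of local right identities in $T.$  Hence $a_n=a_{n+1}$ for all $n\geq N,$ and the chain in $S$ stabilises.  This completes the proof.
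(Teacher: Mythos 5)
Your proof is correct and follows essentially the same route as the paper's: both lift the chain to $T$ via the hypothesis $(aS)\theta\subseteq(a\theta)T$, use that $T$ is $\arn$ and $\ar$-trivial to get $a_n\theta=a_N\theta$ eventually, and then derive a contradiction with the absence of local right identities in $T$ to force the chain in $S$ to become constant. Your only cosmetic deviation is comparing consecutive terms $a_n, a_{n+1}$ rather than comparing each $a_n$ with $a_N$ as the paper does, which makes no substantive difference.
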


\begin{proof}
It is clear if $S$ had an element with a local right identity then so would $T,$ so $S$ has no element with a local right identity.  By Proposition \ref{prop:equiv_conditions}, to prove that $S$ is $\arn$ it suffices to show that it contains no infinite strictly ascending chain of elements under the $\ar$-preorder.  So, consider an ascending chain $$a_1\leq_Sa_2\leq_S\cdots$$ in $S.$  Then, for each $i\in\N$, we have $a_i\in a_{i+1}S^1$.  Therefore, by assumption, we have $a_i\theta\in(a_{i+1}S^1)\theta\subseteq(a_{i+1}\theta)T^1$.  Thus, we have an ascending chain
$$a_1\theta\leq_Ta_2\theta\leq_T\cdots$$ in $T.$
Since $T$ is $\arn$ and $\ar$-trivial, there exists $N\in\N$ such that $a_n\theta=a_N\theta$ for all $n\geq N.$  For each $n\geq N,$ we cannot have $a_N\in a_nS,$ for then we would have
$$a_N\theta\in(a_nS)\theta\subseteq(a_n\theta)T=(a_N\theta)T,$$
contradicting the fact that $T$ has no element with a local right identity.  Thus $a_n=a_N$ for all $n\geq N.$  Hence $S$ is $\arn$.
\end{proof}

\begin{cor}
\label{cor:nolri}
Let $S$ and $T$ be semigroups with a homomorphism $\theta : S\to T.$  If $T$ is $\arn$ and has no element with a local right identity, then $S$ is $\arn$ and has no element with a local right identity.
\end{cor}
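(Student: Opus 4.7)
The plan is to deduce this corollary immediately from Proposition \ref{prop:nolri} by verifying that any semigroup homomorphism automatically satisfies the hypothesis $(aS)\theta\subseteq(a\theta)T$ imposed on the map in that proposition.

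Concretely, I would first fix $a\in S$ and an arbitrary element $as\in aS$ with $s\in S$. Using the fact that $\theta$ is a homomorphism, I would compute $(as)\theta=(a\theta)(s\theta)\in(a\theta)T$, which gives the inclusion $(aS)\theta\subseteq(a\theta)T$ as required. This is the only content of the proof: once this inclusion is in hand, Proposition \ref{prop:nolri} directly yields that $S$ is $\arn$ and has no element with a local right identity.

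There is no real obstacle here; the corollary is essentially just the specialisation of the preceding proposition from maps with a weak multiplicative property to genuine homomorphisms. The only thing to mention, for the reader's convenience, is that the conclusion about $S$ having no local right identities also follows trivially from the homomorphism property alone (if $a=ab$ in $S$, then $a\theta=(a\theta)(b\theta)$ in $T$), so one could alternatively split the statement into the two assertions and prove each separately — but invoking Proposition \ref{prop:nolri} handles both at once.
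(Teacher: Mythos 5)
Your proof is correct and is exactly the argument the paper intends: the corollary is stated without proof precisely because, as you verify, any homomorphism satisfies $(as)\theta=(a\theta)(s\theta)\in(a\theta)T$, so Proposition \ref{prop:nolri} applies immediately. Nothing further is needed.
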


\begin{prop}\cite[Proposition 3.9]{Miller:2023}
\label{prop:ideal,lri}
Let $S$ be a semigroup, let $I$ be an ideal of $S,$ and suppose that every element of $I$ has a local right identity in $I.$  Then $S$ is $\arn$ if and only if both $I$ and $S/I$ are $\arn.$
\end{prop}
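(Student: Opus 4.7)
The forward direction is immediate from results already stated: Proposition \ref{prop:ideal} gives that $I$ is $\arn$, and the Rees quotient lemma cited just before Proposition \ref{prop:ideal} gives that $S/I$ is $\arn$. So the substantive content is the converse.

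For the converse, suppose $I$ and $S/I$ are $\arn$ and take an ascending chain $a_1 \leq_S a_2 \leq_S \cdots$ in $S$. Let $\pi : S \to S/I$ be the quotient homomorphism; since this is a homomorphism we get an ascending chain $[a_1]\leq_{S/I}[a_2]\leq_{S/I}\cdots$ in $S/I$, which by hypothesis stabilises at some $N$, so that $[a_n]\,\ar_{S/I}\,[a_N]$ for every $n\geq N$. I would then split into cases according to whether $[a_N]$ is the zero of $S/I$.

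\textit{Case 1: $a_N\notin I$.} Here $[a_N]\neq 0$, and since $\{0\}$ is itself an $\ar_{S/I}$-class, $[a_n]\neq 0$ for all $n\geq N$. For each such $n$, write $[a_n]=[a_N][s]$ with $[s]\in (S/I)^1$. The case $[s]=0$ is ruled out because $[a_n]\neq 0$, so either $s=1$ (giving $a_n=a_N$) or $s\in S\setminus I$; in the latter case $a_Ns\notin I$ (else $[a_n]=0$), so $a_n=a_Ns\in a_NS$. Either way $a_n\in a_NS^1$, i.e.\ $a_n\leq_S a_N$, which combined with $a_N\leq_S a_n$ from the original chain yields $a_n\,\ar_S\,a_N$.

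\textit{Case 2: $a_N\in I$.} The same Rees-quotient analysis, applied to $[a_n]\leq_{S/I}[a_N]=0$, forces $[a_n]=0$, i.e.\ $a_n\in I$, for every $n\geq N$. This is exactly the point where the local right identity assumption is needed, to convert the $S$-chain inside $I$ into an $I$-chain. Given $n\geq N$ and $s\in S^1$ with $a_n=a_{n+1}s$, pick a local right identity $b\in I$ of $a_n$ and rewrite $a_n=a_nb=a_{n+1}(sb)$; since $I$ is a left ideal of $S$, $sb\in I$, so $a_n\leq_I a_{n+1}$. Applying the hypothesis that $I$ is $\arn$ to the chain $a_N\leq_I a_{N+1}\leq_I\cdots$, we obtain $M\geq N$ with $a_n\,\ar_I\,a_M$ for all $n\geq M$; the trivial observation $\ar_I\subseteq\ar_S$ (if $aI^1=bI^1$ then $a\in bS^1$ and $b\in aS^1$) then gives $a_n\,\ar_S\,a_M$, completing the proof via Proposition \ref{prop:equiv_conditions}.

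The main obstacle is Case 2: the naive attempt to transfer the chain into $I$ fails because the witness $s\in S^1$ for $a_n\leq_S a_{n+1}$ need not lie in $I$, and it is precisely the local right identity hypothesis that lets one absorb an element of $I$ on the right of $s$ and stay inside $I$. Case 1 is purely formal manipulation of the Rees quotient and requires no extra hypothesis.
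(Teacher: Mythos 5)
Your proof is correct, and a point of comparison worth noting: the paper does not actually contain a proof of this proposition---it is quoted from \cite{Miller:2023} (Proposition 3.9)---so your argument can only be judged on its own merits, on which it is complete. The forward direction is, as you say, immediate from Proposition \ref{prop:ideal} and the Rees-quotient lemma, and needs no hypothesis on $I$. In the converse, the two delicate points are exactly the ones you isolate and both are handled correctly: (i) in Case 1, since $\{0\}$ is a singleton $\ar_{S/I}$-class and the nonzero classes of the Rees quotient are singletons, stabilisation of the chain $[a_1]\leq_{S/I}[a_2]\leq_{S/I}\cdots$ genuinely pulls back to $a_n\,\ar_S\,a_N$---your explicit ruling out of $[s]=0$ is the step a careless version would miss, since $[a_n]=[a_N][s]$ with $[s]=0$ carries no information about $a_n$ in $S$; (ii) in Case 2, the witness $s\in S^1$ for $a_n\leq_S a_{n+1}$ need not lie in $I$, and absorbing a local right identity $b\in I$ of $a_n$ to get $a_n=a_{n+1}(sb)$ with $sb\in S^1I\subseteq I$ is precisely the intended use of the hypothesis; the final observation $\ar_I\subseteq\ar_S$ is trivial but necessary and you state it. One could streamline slightly by noting that once some $a_n\in I$ the whole tail of the chain lies in $I$ (so the case split is really ``the chain eventually enters $I$ or it never does''), but this is cosmetic; your quotient-first organisation is the natural argument for this statement.
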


\section{\large{Semidirect Products}\nopunct}
\label{sec:sdp}

Let $S$ and $T$ be semigroups, and let $\f : T\to\E(S)$ be a homomorphism, where $\E(S)$ denotes the monoid of all endomorphisms of $S.$  The image of an element $t\in T$ under $\f$ will be denoted by $\f_t$.  We write $\f_t$ on the left of its argument; i.e.\ $\f_t(s)$ for $s\in S.$  The {\em semidirect product of $S$ and $T$ with respect to} $\f$, denoted by $S\s T,$ is the semigroup with underlying set $S\times T$ and multiplication given by
$$(s,t)(s',t')=(s\f_t(s'),tt').$$
Note that the direct product $S\times T$ is the semidirect product $S\s T$ where $\f_t=id_S$ for all $t\in T.$

The property of being $\ar$-noetherian was considered for semidirect products in \cite{Stopar}.  Several partial characterisations were obtained for a semidirect product $S\s T$ to be $\arn$.  For instance, if both $S$ and $T$ contain at least one idempotent and $\f_t$ is surjective for every $t\in T,$ then $S\s T$ is $\arn$ if and only if $S$ and $T$ are $\arn$ \cite[Theorem 3.13]{Stopar}.

The purpose of this section is to provide necessary and sufficient conditions for a semidirect product to be $\ar$-noetherian.  To this end, we first prove a few lemmas.

\begin{lem}\label{lem:sdp1}
If $S\s T$ is $\arn$, then at least one of $S$ and $T$ is $\arn$.
\end{lem}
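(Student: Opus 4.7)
I prove the contrapositive: supposing that neither $S$ nor $T$ is $\arn$, I construct an infinite strictly ascending chain in $S\s T$. Fix strict ascending chains $s_1<_S s_2<_S \cdots$ in $S$ with $s_i=s_{i+1}v_i$ ($v_i\in S$), and $t_1<_T t_2<_T\cdots$ in $T$ with $t_i=t_{i+1}u_i$ ($u_i\in T$). These two chains are the raw material; the task is to combine them inside $S\s T$.

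I will look for elements of the form $(a_i,t_i)\in S\s T$, so that the second coordinate already carries the chain in $T$. Unpacking the semidirect-product multiplication, $(a_i,t_i)\leq_{S\s T}(a_{i+1},t_{i+1})$ amounts to the existence of $x_i\in S$ with $a_i=a_{i+1}\f_{t_{i+1}}(x_i)$, the $T$-coordinate condition $t_i=t_{i+1}y_i$ being met by $y_i:=u_i$. Strictness is then automatic: the projection $(s,t)\mapsto t$ from $S\s T$ to $T$ is order-preserving with respect to $\leq$, so any $(a_{i+1},t_{i+1})\leq_{S\s T}(a_i,t_i)$ would force $t_{i+1}\leq_T t_i$, contradicting $t_i<_T t_{i+1}$. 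The whole problem therefore reduces to producing a sequence $(a_i)$ in $S$ with $a_i\in a_{i+1}\f_{t_{i+1}}(S)$ for all $i\geq 1$.

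The main obstacle is that the most natural choice, $a_i:=s_i$, requires $v_i\in\f_{t_{i+1}}(S)$ for every $i$; this can fail when the endomorphisms $\f_t$ are not surjective. My plan to work around this is to bake the necessary $\f$-factor into the definition of $a_i$ by setting $a_i:=s_{i+1}\f_{t_{i+1}}(c_{i+1})$ for a carefully selected sequence $c_i\in S$. With this choice, $(a_i,t_i)=(s_{i+1},t_{i+1})(c_{i+1},u_i)$ automatically lies in the principal right ideal of $(s_{i+1},t_{i+1})$, and what remains is to verify the step $(a_i,t_i)\leq_{S\s T}(a_{i+1},t_{i+1})$, which reduces to a telescoping identity relating $v_{i+1}$, $\f_{u_{i+1}}$, and $c_{i+1}$. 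I expect the heart of the argument to lie in choosing the $c_i$ so that this identity holds simultaneously for every $i$; this is precisely where the chain $s_1<_S s_2<_S\cdots$ in $S$ must be invoked in tandem with the chain in $T$, with the $s_i$'s furnishing the divisors needed inside the $\f_{t_{i+1}}$-images.
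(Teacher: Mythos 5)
Your framing is sound and matches the skeleton of the paper's argument: put the chain $t_1<_Tt_2<_T\cdots$ in the second coordinate, observe that strictness comes for free because the projection onto $T$ is a homomorphism, and reduce the whole problem to producing $a_i\in S$ with $a_i\in a_{i+1}\f_{t_{i+1}}(S)$ for all $i$. But the proposal stops exactly where the difficulty lies: the $c_i$ are never constructed, and the step you defer (``choosing the $c_i$ so that this identity holds simultaneously for every $i$'') is the entire content of the lemma. Moreover, your ansatz $a_i:=s_{i+1}\f_{t_{i+1}}(c_{i+1})$ does not remove the obstacle you yourself identified, it only relocates it. Writing $s_{i+1}=s_{i+2}v_{i+1}$ and $\f_{t_{i+1}}=\f_{t_{i+2}}\circ\f_{u_{i+1}}$, the step condition $a_i\in a_{i+1}\f_{t_{i+1}}(S)$ becomes
$$s_{i+2}\,v_{i+1}\,\f_{t_{i+2}}\bigl(\f_{u_{i+1}}(c_{i+1})\bigr)\;\in\;s_{i+2}\,\f_{t_{i+2}}\bigl(c_{i+2}\,\f_{u_{i+1}}(S)\bigr),$$
and the factor $v_{i+1}$, sitting to the left of the image of $\f_{t_{i+2}}$, is precisely the element that need not factor through $\f_{t_{i+2}}$ --- the same failure as for the naive choice $a_i=s_i$. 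As long as the prefix $s_{i+1}$ stays outside the image, no choice of the $c_i$ absorbs it, so the ``telescoping identity'' you hope for has no reason to hold.

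The missing idea (the paper's) is to push the \emph{entire} $S$-chain inside the image of a single endomorphism: since $t_1=t_{i+1}u_i\cdots u_1$ for every $i$, one has $\f_{t_1}=\f_{t_{i+1}}\circ\f_{u_i\cdots u_1}$, so setting $a_i:=\f_{t_1}(s_i)$ and applying $\f_{t_1}$ to $s_i=s_{i+1}v_i$ gives
$$a_i=\f_{t_1}(s_{i+1})\,\f_{t_1}(v_i)=a_{i+1}\,\f_{t_{i+1}}\bigl(\f_{u_i\cdots u_1}(v_i)\bigr),$$
i.e.\ $(a_i,t_i)=(a_{i+1},t_{i+1})\bigl(\f_{u_i\cdots u_1}(v_i),u_i\bigr)$, which is exactly the condition you reduced to. Equivalently, this is your ansatz with the prefix deleted: take $a_i=\f_{t_{i+1}}(c_{i+1})$ with $c_{i+1}=\f_{u_i\cdots u_1}(s_i)$. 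Note that $\f_{t_1}$ need not be injective, so the first coordinates may collapse; but, as you correctly observed, strictness is carried entirely by the $T$-coordinate, so the resulting chain still violates the hypothesis that $S\s T$ is $\arn$. With this substitution your argument closes and coincides with the paper's proof.
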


\begin{proof}
Suppose for a contradiction that neither $S$ nor $T$ are $\arn$.  Then there exist infinite strictly ascending chains
$$a_1<_Sa_2<_S\cdots\quad\text{and}\quad b_1<_Tb_2<_T\cdots$$
in $S$ and $T,$ respectively.  Then, for each $i\in\N,$ there exist $s_i\in S$ and $t_i\in T$ such that $a_i=a_{i+1}s_i$ and $b_i=b_{i+1}t_i$.  Observe that $b_1=b_{i+1}t_i\dots t_1$ for each $i\in\N.$  We have 
\begin{align*}
\f_{b_1}(a_i)&=\f_{b_1}(a_{i+1}s_i)=\f_{b_1}(a_{i+1})\f_{b_1}(s_i)=\f_{b_1}(a_{i+1})\f_{b_{i+1}t_i\dots t_1}(s_i)\\&=\f_{b_1}(a_{i+1})\f_{b_{i+1}}(\f_{t_i\dots t_1}(s_i)).
\end{align*}
Therefore, we have
$$(\f_{b_1}(a_i),b_i)=(\f_{b_1}(a_{i+1}),b_{i+1})(\f_{t_i\dots t_1}(s_i),t_i).$$
Thus, letting $U=S\s T,$ we have an ascending chain
$$(\f_{b_1}(a_1),b_1)\leq_U(\f_{b_1}(a_2),b_2)\leq_U\cdots$$
in $U.$  Since $U$ is $\arn$, there exists $N\in\N$ such that $(\f_{b_1}(a_n),b_n)\,\ar_U\,(\f_{b_1}(a_N),b_N)$ for all $n\geq N.$  In particular, there exists $(s,t)\in U$ such that $(\f_{b_1}(a_{N+1}),b_{N+1})=(\f_{b_1}(a_N),b_N)(s,t).$  But then $b_{N+1}=b_Nt,$ contradicting that $b_N<_Tb_{N+1}$. 
\end{proof}

\begin{lem}\label{lem:sdp2}
Suppose that $S$ has an element with a local right identity.  If $S\s T$ is $\arn$, then $T$ is $\arn$.
\end{lem}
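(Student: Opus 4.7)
The plan is to prove the contrapositive: assuming $T$ is not $\arn$, I will construct an ascending chain in $U:=S\s T$ that does not stabilise, thereby showing $U$ fails to be $\arn$. The key ingredient is an element $a\in S$ with $a=ae$; the role of $e$ will be to supply a ``right identity'' that can be transported through $\f$.

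By Proposition~\ref{prop:equiv_conditions}, fix an infinite strictly ascending chain $b_1<_Tb_2<_T\cdots$ and, for each $i$, a witness $t_i\in T$ with $b_i=b_{i+1}t_i$, so that $b_1=b_{i+1}t_it_{i-1}\cdots t_1$ for every $i\geq 1$. I would set $u_i:=(\f_{b_1}(a),b_i)\in U$. To verify $u_i\leq_U u_{i+1}$, the proposed multiplier is $(x_i,t_i)$ with $x_i:=\f_{t_it_{i-1}\cdots t_1}(e)$. Since $\f$ is a homomorphism, $\f_{b_{i+1}}(x_i)=\f_{b_{i+1}t_i\cdots t_1}(e)=\f_{b_1}(e)$, and consequently
$$u_{i+1}(x_i,t_i)=(\f_{b_1}(a)\f_{b_{i+1}}(x_i),b_{i+1}t_i)=(\f_{b_1}(a)\f_{b_1}(e),b_i)=(\f_{b_1}(ae),b_i)=u_i.$$

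With the ascending chain $u_1\leq_U u_2\leq_U\cdots$ in hand, $U$ being $\arn$ would produce some $N$ with $u_n\,\ar_U\,u_N$ for all $n\geq N$. Writing $u_{N+1}=u_N(s,t)$ for some $(s,t)\in U^1$ and projecting to the second coordinate yields $b_{N+1}=b_Nt$, so $b_{N+1}\leq_Tb_N$, contradicting $b_N<_Tb_{N+1}$. The only nontrivial step is designing the $u_i$: the first coordinate must be constant (so that the ascent in $U$ is controlled purely by the $T$-component) yet ``absorbed'' at each stage by a witness whose image under $\f_{b_{i+1}}$ serves as a right identity for it. The telescoping $b_1=b_{i+1}t_i\cdots t_1$ combined with $\f$ being a homomorphism is exactly what makes the uniform choice $\f_{b_1}(a)$ work, so I do not expect a serious obstacle beyond writing down the correct $x_i$.
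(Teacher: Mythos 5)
Your proof is correct and takes essentially the same route as the paper's: both build the constant-first-coordinate chain $(\f_{b_1}(a),b_i)$ with multiplier $(\f_{t_i\cdots t_1}(e),t_i)$, using the telescoped identity $b_1=b_{i+1}t_i\cdots t_1$ and the fact that $\f_{b_1}$ is an endomorphism to absorb the local right identity. The final contradiction, projecting the eventual $\ar_U$-equivalence onto the $T$-coordinate to violate $b_N<_Tb_{N+1}$, is also exactly the paper's argument.
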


\begin{proof}
Suppose for a contradiction that $T$ is not $\arn$.  Then there exists an infinite strictly ascending chain $$b_1<_Tb_2<_T\cdots$$ in $T.$  Then, for each $i\in\N,$ there exists $t_i\in T$ such that $b_i=b_{i+1}t_i$.  Let $a\in S$ have a local right identity $s\in S,$ so that $a=as.$  We have 
$$\f_{b_1}(a)=\f_{b_1}(as)=\f_{b_1}(a)\f_{b_1}(s)=\f_{b_1}(a)\f_{b_{i+1}}(\f_{t_i\dots t_1}(s)),$$
and hence
$$(\f_{b_1}(a),b_i)=(\f_{b_1}(a),b_{i+1})(\f_{t_i\dots t_1}(s),t_i).$$
The final part of the proof is essentially the same as that of Lemma \ref{lem:sdp1}.
\end{proof}

\begin{lem}\label{lem:sdp3}
If either $S$ or $T$ is $\arn$ and has no element with a local right identity, then $S\s T$ is $\arn$.
\end{lem}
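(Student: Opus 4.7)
The plan is to handle the two cases separately, each via an appropriate projection map from $U = S \s T$ to the $\arn$ factor, and then invoke one of the two transfer results from Section \ref{sec:prelim}, namely Proposition \ref{prop:nolri} and Corollary \ref{cor:nolri}.

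First, suppose $T$ is $\arn$ and has no element with a local right identity. The second projection $\pi_T : U \to T$, $(s,t) \mapsto t$, is a semigroup homomorphism directly from the definition of multiplication in $U$. Therefore Corollary \ref{cor:nolri} applies and yields that $U$ is $\arn$ (and, as a bonus, that $U$ itself has no element with a local right identity).

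The other case is slightly more delicate because the first projection $\pi_S : U \to S$, $(s,t) \mapsto s$, is \emph{not} a homomorphism in general: we have $(s,t)(s',t') = (s\varphi_t(s'), tt')$, so $\pi_S$ only captures the factor $s\varphi_t(s')$ rather than $ss'$. However, this is precisely the setting of Proposition \ref{prop:nolri}, which requires only the weaker inclusion $(uU)\pi_S \subseteq (u\pi_S)S$ for each $u \in U$. For $u = (s,t)$ we compute
\[
(uU)\pi_S = \{s\varphi_t(s') : s' \in S\} = s\,\varphi_t(S) \subseteq sS = (u\pi_S)S,
\]
using only the fact that $\varphi_t$ is an endomorphism of $S$. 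Hence when $S$ is $\arn$ and has no element with a local right identity, Proposition \ref{prop:nolri} gives that $U$ is $\arn$.

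I don't anticipate a genuine obstacle: the whole point of having set up Proposition \ref{prop:nolri} in the preliminaries (rather than only its homomorphism corollary) seems to be to accommodate exactly the first projection of a semidirect product. The only thing to be careful about is writing out the inclusion $(uU)\pi_S \subseteq (u\pi_S)S$ in the ``$S$ is $\arn$'' case to justify why a non-homomorphic map is still good enough.
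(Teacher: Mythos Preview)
Your proposal is correct and matches the paper's proof essentially verbatim: the paper also verifies that $\pi_S$ satisfies the hypothesis of Proposition \ref{prop:nolri} via the computation $\big((a,b)(s,t)\big)\pi_S=a\f_b(s)\in\big((a,b)\pi_S\big)S$, and then notes that $\pi_T$ is a homomorphism so Corollary \ref{cor:nolri} handles the other case. The only cosmetic difference is that you treat the $T$-case first and the $S$-case second, whereas the paper does them in the opposite order.
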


\begin{proof}
We claim that the projection map $\pi_S : S\s T\to S$ satisfies the condition of Proposition \ref{prop:nolri}.  Indeed, for any $(a,b), (s,t)\in S\s T,$ we have
$$\big((a,b)(s,t)\big)\pi_S=(a\f_b(s),bt)\pi_S=a\f_b(s)=\big((a,b)\pi_S\big)\f_b(s)\in\big((a,b)\pi_S\big)S,$$
as required.  It is clear that the projection map $\pi_T : S\s T\to T$ is a homomorphism.  Hence, the result follows from Proposition \ref{prop:nolri} and Corollary \ref{cor:nolri}.
\end{proof}

\begin{lem}\label{lem:f-chain}
If $a,a'\in S$ and $b,b'\in T$ with $b\in b'T,$ then $a\big(\f_b(S)\big)^1\subseteq a'\big(\f_{b'}(S)\big)^1$ if and only if $a\in a'\big(\f_{b'}(S)\big)^1$.  Moreover, if $b\,\ar_T\,b'$ then $\f_b(S)=\f_{b'}(S).$
\end{lem}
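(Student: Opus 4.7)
The plan is to reduce everything to two observations about $\f_{b'}(S)$. First, because $\f : T \to \E(S)$ is a homomorphism (with composition of endomorphisms forced by associativity of the semidirect product to satisfy $\f_{tt'}(s) = \f_t(\f_{t'}(s))$), the hypothesis $b \in b'T$ gives $b = b't$ for some $t \in T$, whence $\f_b(S) = \f_{b'}(\f_t(S)) \subseteq \f_{b'}(S)$. Second, as the image of an endomorphism, $\f_{b'}(S)$ is a subsemigroup of $S$, so $(\f_{b'}(S))^1$ is closed under multiplication.

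With these in hand, the first equivalence is quick. The forward direction is immediate from $a \in a(\f_b(S))^1$. For the converse, I would write $a = a'y$ with $y \in (\f_{b'}(S))^1$ and take an arbitrary $x \in (\f_b(S))^1 \subseteq (\f_{b'}(S))^1$; then $ax = a'(yx)$ with $yx \in (\f_{b'}(S))^1$ by closure, so $ax \in a'(\f_{b'}(S))^1$, as required.

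For the \emph{moreover} clause, $b \,\ar_T\, b'$ means $bT^1 = b'T^1$, which gives either $b = b'$ (trivially yielding $\f_b(S) = \f_{b'}(S)$) or both $b \in b'T$ and $b' \in bT$; applying the key containment symmetrically then produces $\f_b(S) = \f_{b'}(S)$.

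There is no serious obstacle here; the only points requiring minor care are fixing the composition convention for $\E(S)$ (read off from associativity of $\s$) and handling the degenerate case $b = b'$ separately in the $\ar_T$ step, since $b \in b'T$ need not hold when $T$ lacks an identity.
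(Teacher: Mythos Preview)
Your proof is correct and follows essentially the same route as the paper's: both use $b = b't$ to obtain $\f_b(S) = \f_{b'}(\f_t(S)) \subseteq \f_{b'}(S)$, deduce the equivalence from closure of $(\f_{b'}(S))^1$ under multiplication, and then argue symmetrically for the \emph{moreover} clause. Your version is simply more explicit about the subsemigroup property and the degenerate case $b = b'$, which the paper leaves implicit.
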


\begin{proof}
If $a\big(\f_b(S)\big)^1\subseteq a'\big(\f_{b'}(S)\big)^1$, then clearly $a\in a'\big(\f_{b'}(S)\big)^1$.  Conversely, suppose that $a\in a'\big(\f_{b'}(S)\big)^1$.  There exists $t\in T$ such that $b=b't.$  Thus, we have 
$$\f_{b}(S)=\f_{b't}(S)\subseteq\f_{b'}(\f_t(S))\subseteq\f_{b'}(S),$$
and hence
$a(\f_b(S))^1\subseteq a'(\f_{b'}(S))^1$.

Now, if $b\,\ar_T\,b'$, a similar argument as above proves that $\f_{b'}(S)\subseteq\f_b(S),$ and hence $\f_b(S)=\f_{b'}(S).$
\end{proof}

Before stating the main result of this section, we first introduce the following definition.

\begin{defn}
Let $S$ and $T$ be semigroups, and let $\f : T\to\E(S)$ be a homomorphism.  A {\em $\f$-chain} in $S$ is an ascending chain of the form
$$a_1\big(\f_{b_1}(S)\big)^1\subseteq a_2\big(\f_{b_2}(S)\big)^1\subseteq a_3\big(\f_{b_3}(S)\big)^1\subseteq\cdots$$
where $a_i\in S,$ $b_i\in T$ and $b_i\in b_{i+1}T$ for all $i\geq1.$
\end{defn}

\begin{thm}\label{thm:sdp}
Let $S$ and $T$ be semigroups, and let $\f : T\to\emph{End}(S)$ be a homomorphism. 
Then $S\s T$ is $\arn$ if and only if either:
\begin{enumerate}
\item $S$ is $\arn$ and has no element with a local right identity; or
\item every $\f$-chain in $S$ eventually stabilises and $T$ is $\arn$.
\end{enumerate}
\end{thm}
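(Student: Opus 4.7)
The plan is to prove the two implications separately, leveraging Lemmas \ref{lem:sdp1}, \ref{lem:sdp2}, \ref{lem:sdp3}, and \ref{lem:f-chain}.

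For the forward direction, I assume $S\s T$ is $\arn$. If $S$ is $\arn$ and has no element with a local right identity, then (1) holds and we are done. Otherwise, either $S$ has an element with a local right identity (in which case Lemma \ref{lem:sdp2} gives that $T$ is $\arn$) or $S$ is not $\arn$ (in which case Lemma \ref{lem:sdp1} gives that $T$ is $\arn$); so $T$ is $\arn$ in both remaining sub-cases.  It then remains to show every $\f$-chain stabilises.  Suppose otherwise and pass to a strict subchain; strictness provides $b_i = b_{i+1}t_i$ together with either $a_i = a_{i+1}\f_{b_{i+1}}(s_i)$ or $a_i = a_{i+1}$ at each step.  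Passing to a further subsequence, I may assume either $a_i \neq a_{i+1}$ for all $i$ or $a_i = a_{i+1}$ for all $i$.  In the first case, the identities $(a_i, b_i) = (a_{i+1}, b_{i+1})(s_i, t_i)$ define an ascending chain in $S\s T$ that must be strict, because $(a_i, b_i)\,\ar_{S\s T}\,(a_{i+1}, b_{i+1})$ would give $b_i\,\ar_T\,b_{i+1}$ and then, via Lemma \ref{lem:f-chain}, force $a_i(\f_{b_i}(S))^1 = a_{i+1}(\f_{b_{i+1}}(S))^1$, contradicting strictness of the $\f$-chain; this contradicts $S\s T$ being $\arn$.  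In the second case the $a_i$ are constant, so strictness forces $b_i \neq b_{i+1}$ and the second statement of Lemma \ref{lem:f-chain} rules out $b_i\,\ar_T\,b_{i+1}$, yielding a strict chain $b_1 <_T b_2 <_T \cdots$ in $T$, contradicting that $T$ is $\arn$.

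For the reverse direction, (1) gives $S\s T$ is $\arn$ directly from Lemma \ref{lem:sdp3}.  Assuming (2), I take an ascending chain $(a_i, b_i)$ in $S\s T$ and suppose for contradiction it does not stabilise in $\ar_{S\s T}$.  Each step then admits a non-trivial factorization $(a_i, b_i) = (a_{i+1}, b_{i+1})(s_i, t_i)$ with $s_i \in S$ and $t_i \in T$, producing both a $\f$-chain $a_i(\f_{b_i}(S))^1 \subseteq a_{i+1}(\f_{b_{i+1}}(S))^1$ (via Lemma \ref{lem:f-chain}) and an ascending chain $b_1 \leq_T b_2 \leq_T \cdots$ in $T$.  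By hypothesis both stabilise, so for some sufficiently large $N$ one has $b_N\,\ar_T\,b_{N+1}$, hence $\f_{b_N}(S) = \f_{b_{N+1}}(S)$ by Lemma \ref{lem:f-chain}, together with $a_N(\f_{b_N}(S))^1 = a_{N+1}(\f_{b_{N+1}}(S))^1$.  A case analysis on whether $a_{N+1} = a_N$ or $a_{N+1} \in a_N\f_{b_N}(S)$, and whether $b_{N+1} = b_N$ or $b_{N+1} \in b_N T$, then yields in each case a factorization $(a_{N+1}, b_{N+1}) = (a_N, b_N)(u, v)$, so that $(a_{N+1}, b_{N+1})\,\ar_{S\s T}\,(a_N, b_N)$, contradicting strictness.

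The main obstacle is the reverse direction of (2): because a $\f$-chain requires $b_i \in b_{i+1}T$ rather than merely $b_{i+1}T^1$, a chain in $S\s T$ with a step where $b_i = b_{i+1}$ or $a_i = a_{i+1}$ does not directly translate into the needed factorization.  The resolution is that a coincidence in one coordinate at a strict step automatically produces the local right identity needed to close the argument: if $b_{N+1} = b_N$, the original identity $b_N = b_{N+1}t_N$ exhibits $t_N$ as a local right identity of $b_N$; and if $a_{N+1} = a_N$, combining $a_N = a_{N+1}\f_{b_{N+1}}(s_N)$ with $\f_{b_{N+1}}(S) = \f_{b_N}(S)$ furnishes some $\f_{b_N}(u)$ as a local right identity of $a_N$.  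These local right identities supply the missing $u$ and $v$ in each of the four sub-cases and produce the required contradiction.
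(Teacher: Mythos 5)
Your proof is correct, and although it rests on the same scaffolding as the paper's (Lemmas \ref{lem:sdp1}, \ref{lem:sdp2} and \ref{lem:sdp3} for the easy implications, Lemma \ref{lem:f-chain} for translating between $\f$-chains and chains in $U=S\s T$), both of your key arguments are executed genuinely differently. In the forward direction, the paper first uses the $\arn$-ness of $T$ to stabilise the $b_i$, so that $\f_{b_n}(S)=\f_{b_N}(S)$ by Lemma \ref{lem:f-chain}, and then extracts a local right identity $t$ of $b_N$ from $b_N\,\ar_T\,b_{N+1}$ and $b_N\in b_{N+1}T$, producing a chain $(a_N,b_N)\leq_U(a_{N+1},b_N)\leq_U\cdots$ with \emph{constant} second coordinate, whose stabilisation contradicts strictness of the $\f$-chain with essentially no further work. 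Your subsequence dichotomy (all $a_i$ equal versus all distinct) avoids stabilising the $b_i$, at the cost of having to verify strictness of a chain in $U$ with varying second coordinates; your verification of this via Lemma \ref{lem:f-chain} is correct (and your constant-$a_i$ case, yielding $b_1<_Tb_2<_T\cdots$, is sound). In the reverse direction, the paper does not argue by contradiction at the final step at all: from $a_N=a_{N+1}\f_{b_{N+1}}(s_N)$ it absorbs the adjoined identities directly, computing $a_n\in a_N\big(\f_{b_N}(S)\big)^1\subseteq a_{N+1}\f_{b_{N+1}}(S)$ and $b_n\in b_NT^1\subseteq b_{N+1}T$, so a genuine factorization $(a_n,b_n)=(a_{N+1},b_{N+1})(u_n,v_n)$ falls out for all $n\geq N+1$ with no case analysis. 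Your four sub-cases patched by local right identities ($t_N$ for the second coordinate, and $\f_{b_N}(u)=\f_{b_{N+1}}(s_N)$ for the first) are the same underlying computation performed pointwise --- your correctly identified ``main obstacle'' is exactly what the paper's absorption trick dissolves in one line --- so the paper's route is more economical, while yours makes the role of the degenerate cases explicit.

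One presentational caveat: ``suppose for contradiction the chain does not stabilise; each step then admits a non-trivial factorization'' is not literally valid, since a non-stabilising ascending chain may contain repeated (or $\ar_U$-equivalent) consecutive terms. You need either the paper's explicit reduction (``we may assume $(a_i,b_i)\in(a_{i+1},b_{i+1})U$ for each $i$'', obtained by deleting duplicates) or, as your closing phrase ``contradicting strictness'' suggests you intend, to invoke Proposition \ref{prop:equiv_conditions}(5) and work with a strictly ascending chain from the outset, in which every step is automatically a non-trivial factorization. The same remark applies to your unjustified ``pass to a strict subchain'' and the pigeonhole extraction in the forward direction (some value of $a_i$ recurs infinitely often, or else a pairwise-distinct subsequence exists); these are standard and easily supplied, so they are compressions rather than gaps.
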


\begin{proof}
Let $U=S\s T.$

($\Rightarrow$)  Suppose that (1) does not hold.  Then either $S$ is not $\arn$ or $S$ has an element with a local right identity.  In the former case, $T$ is $\arn$ by Lemma \ref{lem:sdp1}, and in the latter case, $T$ is $\arn$ by Lemma \ref{lem:sdp2}.  

Now suppose for a contradiction that there exists an infinite $\f$-chain
$$a_1\big(\f_{b_1}(S)\big)^1\subsetneq a_2\big(\f_{b_2}(S)\big)^1\subsetneq\cdots$$
in $S.$  Then, for each $i\in\N,$ there exists $t_i\in T$ such that $b_i=b_{i+1}t_i$.  Thus, we have an ascending chain $$b_1\leq_T b_2\leq_T\cdots$$ in $T.$  Since $T$ is $\arn$, there exists $N\in\N$ such that $b_n\,\ar_T\,b_N$ for all $n\geq N.$  Then, by Lemma \ref{lem:f-chain}, we have $\f_{b_n}(S)=\f_{b_N}(S)$ for all $n\geq N.$  Therefore, we have an infinite $\f$-chain
$$a_N\big(\f_{b_N}(S)\big)^1\subsetneq a_{N+1}\big(\f_{b_N}(S)\big)^1\subsetneq\cdots.$$
It follows that for each $n\geq N$ there exists $s_n\in S$ such that $a_n=a_{n+1}\f_{b_N}(s_n).$  Also, since $b_N\,\ar_T\,b_{N+1}$ and $b_N\in b_{N+1}T,$ there exists $t\in T$ such that $b_N=b_Nt.$  Hence, we have $(a_n,b_N)=(a_{n+1},b_N)(s_n,t)$ for each $n\geq N.$   Thus, we have an ascending chain
$$(a_N,b_N)\leq_U(a_{N+1},b_N)\leq_U\cdots$$
in $U.$  Since $U$ is $\arn$, there exists $N'\geq N$ such that $(a_n,b_N)\,\ar_U\,(a_{N'},b_N)$ for all $n\geq N'.$  In particular, there exists $(s,t')\in U$ such that $(a_{N'+1},b_N)=(a_{N'},b_N)(s,t').$  Then $a_{N'+1}=a_{N'}\f_{b_N}(s).$  But then 
$$a_{N'}\big(\f_{b_{N'}}(S)\big)^1=a_{N'}\big(\f_{b_N}(S)\big)^1=a_{N'+1}\big(\f_{b_N}(S)\big)^1=a_{N'+1}\big(\f_{b_{N'+1}}(S)\big)^1,$$ 
and we have a contradiction.

($\Leftarrow$) If (1) holds, then $U$ is $\arn$ by Lemma \ref{lem:sdp3}.  Assume then that (2) holds.  Consider an ascending chain
$$(a_1,b_1)\leq_U(a_2,b_2)\leq_U\cdots$$
in $U.$  We may assume without loss of generality that $(a_i,b_i)\in(a_{i+1},b_{i+1})U$ for each $i\in\N.$  Then, for each $i\in\N,$ there exists $(s_i,t_i)\in U$ such that $(a_i,b_i)=(a_{i+1},b_{i+1})(s_i,t_i).$  Then $a_i=a_{i+1}\f_{b_{i+1}}(s_i)$ and $b_i=b_{i+1}t_i$.  Therefore, we have a $\f$-chain
$$a_1\big(\f_{b_1}(S)\big)^1\subseteq a_2\big(\f_{b_2}(S)\big)^1\subseteq\cdots$$
in $S,$ and an ascending chain $$b_1\leq_Tb_2\leq_T\cdots$$ in $T.$  Since every $\f$-chain eventually stabilises and $T$ is $\arn$, there exists $N\in\N$ such that $a_n\big(\f_{b_n}(S)\big)^1=a_N\big(\f_{b_N}(S)\big)^1$ and $b_n\,\ar_T\,b_N$ for all $n\geq N.$  Therefore, by Lemma \ref{lem:f-chain}, for each $n\geq N$ we have $a_n\in a_N\big(\f_{b_N}(S)\big)^1$ and $\f_{b_n}(S)=\f_{b_N}(S).$  Since $a_N=a_{N+1}\f_{b_{N+1}}(s_N),$ we have
$$a_n\in a_{N+1}\f_{b_{N+1}}(S)\big(\f_{b_N}(S)\big)^1=a_{N+1}\f_{b_{N+1}}(S)\big(\f_{b_{N+1}}(S)\big)^1=a_{N+1}\f_{b_{N+1}}(S).$$  Thus, for each $n\geq N+1,$ there exist $u_n\in S$ and $v_n\in T$ such that $a_n=a_{N+1}\f_{b_{N+1}}(u_n)$ and $b_n=b_{N+1}v_n.$  It follows that $(a_n,b_n)=(a_{N+1},b_{N+1})(u_n,v_n),$ and hence we have $(a_n,b_n)\,\ar_U\,(a_{N+1},b_{N+1}).$  This completes the proof.
\end{proof}

\begin{cor}\label{cor:finite_R-classes}
Let $S$ be an $\arn$ semigroup whose $\ar$-classes are all finite, let $T$ be a semigroup, and let $\f : T\to\emph{End}(S)$ be a homomorphism.  Then $S\s T$ is $\arn$ if and only if either $S$ has no element with a local right identity or $T$ is $\arn$.
\end{cor}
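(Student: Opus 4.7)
The plan is to deduce this directly from Theorem~\ref{thm:sdp}. Under the extra hypothesis that $S$ is $\arn$ with finite $\ar$-classes, the ``every $\f$-chain in $S$ eventually stabilises'' condition in clause (2) of that theorem ought to be redundant once $T$ is $\arn$, yielding the cleaner equivalence stated in the corollary.

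For the forward direction, I would simply invoke Theorem~\ref{thm:sdp}: if $S\s T$ is $\arn$, then either (1) holds, whence $S$ has no element with a local right identity, or (2) holds, whence $T$ is $\arn$.

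For the reverse direction, if $S$ has no element with a local right identity then clause~(1) of Theorem~\ref{thm:sdp} applies immediately (using that $S$ is $\arn$). Otherwise, assume $T$ is $\arn$; it then suffices to verify clause~(2), i.e.\ that every $\f$-chain
$$a_1\bigl(\f_{b_1}(S)\bigr)^1 \subseteq a_2\bigl(\f_{b_2}(S)\bigr)^1 \subseteq \cdots$$
in $S$ eventually stabilises. The defining condition $b_i \in b_{i+1} T$ gives an ascending chain $b_1 \leq_T b_2 \leq_T \cdots$ in $T$, which stabilises from some index $N$ by hypothesis; by Lemma~\ref{lem:f-chain} we then have $\f_{b_n}(S) = \f_{b_N}(S)$ for all $n \geq N$. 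Writing $M := (\f_{b_N}(S))^1$, the tail of the $\f$-chain takes the uniform form $a_N M \subseteq a_{N+1} M \subseteq \cdots$, and since $M \subseteq S^1$ this in turn gives an ascending chain $a_N \leq_S a_{N+1} \leq_S \cdots$ in $S$.

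The crux is the closing step, which is where I expect the main (though mild) obstacle to lie. By $\arn$-ness of $S$, there exists $N' \geq N$ such that all $a_n$ for $n \geq N'$ lie in a single $\ar_S$-class $R$; by hypothesis $R$ is finite, so the tail sequence $a_{N'}, a_{N'+1}, \ldots$ takes only finitely many values in $S$. Since each term $a_n M$ of the (still non-decreasing) chain of subsets of $S$ is determined by $a_n$ together with the fixed set $M$, the chain runs through only finitely many distinct subsets and, being monotone, must stabilise. Once $M$ has been pinned down uniformly on the tail via Lemma~\ref{lem:f-chain}, this finiteness-plus-monotonicity observation is the only content beyond a direct appeal to Theorem~\ref{thm:sdp}.
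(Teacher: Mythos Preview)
Your proposal is correct and follows essentially the same route as the paper: reduce to Theorem~\ref{thm:sdp}, and for the nontrivial direction show that when $T$ is $\arn$ every $\f$-chain stabilises by first using Lemma~\ref{lem:f-chain} to make the $\f_{b_n}(S)$ constant on a tail, then using $\arn$-ness of $S$ to trap the $a_n$ in a single finite $\ar_S$-class. The only cosmetic difference is that the paper argues by contradiction (a strictly increasing tail would force the $a_n$ to be pairwise distinct) whereas you phrase the same finiteness-plus-monotonicity observation directly.
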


\begin{proof}
By Theorem \ref{thm:sdp}, it suffices to prove that if $T$ is $\arn$ then every $\f$-chain in $S$ eventually stablises.  So, let $T$ be $\arn$ and suppose for a contradiction that there exists an infinite $\f$-chain
$$a_1\big(\f_{b_1}(S)\big)^1\subsetneq a_2\big(\f_{b_2}(S)\big)^1\subsetneq\cdots$$
in $S.$  Then we have ascending chains
$$a_1\leq_Sa_2\leq_S\cdots\quad\text{and}\quad b_1\leq_T b_2\leq_T\cdots$$
in $S$ and $T,$ respectively.  Since $S$ and $T$ are $\arn,$ there exists $N\in\N$ such that $a_n\,\ar_S\,a_N$ and $b_n\,\ar_T\,b_N$ for all $n\geq N.$  Then, by Lemma \ref{lem:f-chain}, we have $\f_{b_n}(S)=\f_{b_N}(S)$ for all $n\geq N.$  Therefore, we have an infinite $\f$-chain
$$a_N\big(\f_{b_N}(S)\big)^1\subsetneq a_{N+1}\big(\f_{b_N}(S)\big)^1\subsetneq\cdots.$$
But then $a_m\neq a_n$ for all $m,n\geq N$ with $m\neq n,$ contradicting the fact that the $\ar_S$-class of $a_N$ is finite.
\end{proof}

Certainly every finite semigroup has an element with a local right identity, so we deduce:

\begin{cor}
Let $S$ be a finite semigroup, let $T$ be a semigroup, and let $\f : T\to\emph{End}(S)$ be a homomorphism.  Then $S\s T$ is $\arn$ if and only if $T$ is $\ar$-noetherian.
\end{cor}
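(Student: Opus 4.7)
The plan is to deduce this immediately from Corollary \ref{cor:finite_R-classes} by verifying its hypotheses and then observing that the first alternative in its conclusion is excluded. Concretely, I would check three things about the finite semigroup $S$: that it is $\arn$; that all its $\ar$-classes are finite; and that it possesses at least one element with a local right identity.

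The first two points are trivial since $S$ has only finitely many principal right ideals and only finitely many elements altogether. For the third point, the standard fact is that every finite semigroup contains an idempotent $e$, and any idempotent is a local right identity of itself since $e = ee$. This is the only substantive observation in the argument, and it is well known (for any $a\in S$, the cyclic subsemigroup $\langle a\rangle$ is finite, so some power $a^k$ is idempotent).

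Having verified the hypotheses of Corollary \ref{cor:finite_R-classes}, the two equivalent conditions given there for $S \s T$ to be $\arn$ reduce to just ``$T$ is $\arn$'', because the alternative ``$S$ has no element with a local right identity'' is ruled out by the idempotent argument above. This yields the stated biconditional.

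There is no real obstacle here: the result is a direct specialisation of Corollary \ref{cor:finite_R-classes} using the elementary fact that finite semigroups have idempotents. The proof can be written in two or three lines.
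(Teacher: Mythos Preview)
Your proposal is correct and matches the paper's own argument exactly: the paper deduces the corollary from Corollary~\ref{cor:finite_R-classes} after noting that ``certainly every finite semigroup has an element with a local right identity.'' Your justification via the existence of an idempotent is the standard (and correct) way to substantiate that remark.
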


The following example demonstrates that for a semidirect product $S\s T$ to be $\arn$, the semigroup $S$ need not be $\arn$, even in the case that $T$ is trivial.

\begin{ex}
Let $S$ be any monoid with identity 1, let $T=\{e\}$ be the trivial semigroup, and let $\f : T\to\E(S)$ be given by defining $\f_e$ to be the constant map $c_1$ on 1.  Then $(a,e)(a',e)=(a,e)$ for all $a,a'\in S$, so that $S\s T$ is a left zero semigroup, and is hence $\arn$ by Corollary \ref{cor:min_right_ideals}.  (Alternatively, for any $a, a'\in S,$ we clearly have $a\in a'\big(\f_e(S)\big)^1$ if and only if $a=a'$, so certainly every $\f$-chain in $S$ eventually stabilises.  Since $T$ is trivially $\arn$, it then follows from Theorem \ref{thm:sdp} that $S\s T$ is $\ar$-noetherian.)  
\end{ex}

We now show that a semidirect product may {\em not} be $\arn$ even if both its semidirect factors are $\arn$.

\begin{ex}\label{ex:sdp}
Let $S$ be the disjoint union of (a copy of) the free monogenic monoid $\{x\}^{\ast}$ and a set $\{a_i : i\in\Z\}.$  Define a multiplication on $S,$ extending that on $\{x\}^{\ast},$ by
$$a_ix^n=a_{i-n}\quad\text{and}\quad x^na_j=a_ia_j=a_j$$
for all $n\in\N_0$ and $i,j\in\Z$.  It is straightforward to show that $S$ is a monoid under this multiplication.  Clearly $I=\{a_i : i\in\Z\}$ is an ideal of $S$ and a right zero semigroup; in particular, every element of $I$ has a local right identity in $I.$  Since $I$ and $S/I\cong\N^0$ are $\ar$-noetherian, we have that $S$ is $\arn$ by Proposition \ref{prop:ideal,lri}.  Hence $S^0$ is $\arn$.  Now let $T=\{e\}$ be the trivial semigroup, and let $\f : T\to\E(S^0)$ be given by 
$$\f_e(s)=\begin{cases}
s & \text{if }s\in\{x^n : n\in\N_0\},\\
0 & \text{if }s\in\{a_i : i\in\Z\}\cup\{0\}.
\end{cases}$$
For each $i\geq 0,$ we have $a_i=a_{i+1}x=a_{i+1}\f_e(x),$ so that $a_i\big(\f_e(S)\big)^1\subseteq a_{i+1}\big(\f_e(S)\big)^1$.  For any $s\in S^0$ we have 
$$a_i\f_e(s)\in a_i(\{x_n : n\in\N_0\}\cup\{0\})\subseteq\{a_k : k\leq i\}\cup\{0\}.$$
Thus, we have an infinite $\f$-chain
$$a_1\big(\f_e(S)\big)^1\subsetneq a_2\big(\f_e(S)\big)^1\subsetneq\cdots.$$  Hence, by Theorem \ref{thm:sdp}, $S^0\s T$ is not $\arn$.
\end{ex}

For a semigroup $S,$ we denote by $\SE(S)$ the monoid of all surjective endomorphisms of $S.$  We shall use Theorem \ref{thm:sdp} to deduce necessary and sufficient conditions for $S\s T$ to be $\arn$ in the case that $\f_t\in\SE(S)$ for every $t\in T.$  First we prove another lemma.

\begin{lem}\label{lem:sdp4}
Suppose that there exists an element $b\in T$ such that $b$ has a local right identity and $\f_b\in\emph{SEnd}(S)$.  If $S\s T$ is $\arn$, then $S$ is $\arn$.
\end{lem}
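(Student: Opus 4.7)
The plan is to prove the contrapositive by contradiction: assume $S$ is not $\arn$ and construct an infinite strictly ascending $\ar$-chain in $U=S\s T$. So take an infinite strictly ascending chain $a_1<_S a_2<_S\cdots$ in $S$, and for each $i\in\N$ choose $s_i\in S$ with $a_i=a_{i+1}s_i$. By hypothesis, fix $t\in T$ with $b=bt$ and note that $\f_b$ is surjective.

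The natural lift is to consider the sequence $(a_1,b),(a_2,b),\dots$ in $U$. For each $i$, surjectivity of $\f_b$ yields some $s_i'\in S$ with $\f_b(s_i')=s_i$, and then
$$(a_{i+1},b)(s_i',t)=(a_{i+1}\f_b(s_i'),bt)=(a_{i+1}s_i,b)=(a_i,b),$$
so $(a_i,b)\leq_U(a_{i+1},b)$ for every $i$. Thus we have an ascending chain
$$(a_1,b)\leq_U(a_2,b)\leq_U\cdots$$
in $U$, and since $U$ is $\arn$ there must be $N\in\N$ with $(a_n,b)\,\ar_U\,(a_N,b)$ for all $n\geq N$.

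To derive the contradiction, I would focus on the step $N\mapsto N+1$. From $(a_{N+1},b)\leq_U(a_N,b)$ we get $(a_{N+1},b)=(a_N,b)$ or $(a_{N+1},b)=(a_N,b)(s,t')$ for some $(s,t')\in U$. In the first case $a_{N+1}=a_N$, contradicting $a_N<_S a_{N+1}$. In the second case $a_{N+1}=a_N\f_b(s)\in a_N S$, so $a_{N+1}\leq_S a_N$, which together with $a_N\leq_S a_{N+1}$ from the chain yields $a_N\,\ar_S\,a_{N+1}$, again contradicting $a_N<_S a_{N+1}$. Either way we reach a contradiction, so $S$ must be $\arn$.

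I do not foresee a real obstacle: the two pieces of data in the hypothesis (local right identity of $b$, surjectivity of $\f_b$) are exactly what is needed to lift the two coordinates of the chain independently, and the surjectivity also makes the reverse direction trivial, so the $\ar_U$-equivalence descends to an $\ar_S$-equivalence contradicting strictness. The only mild care required is bookkeeping between $\leq_U$ and $\ar_U$ (allowing the case of literal equality alongside multiplication by an element of $U$), which is handled cleanly in the case split above.
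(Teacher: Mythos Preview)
Your proof is correct and follows essentially the same argument as the paper's: lift the strict $\ar_S$-chain to the chain $(a_i,b)$ in $U$ using surjectivity of $\f_b$ and the local right identity of $b$, then use $\arn$ of $U$ to obtain $(a_{N+1},b)\in(a_N,b)U^1$ and read off $a_{N+1}\in a_N S^1$, contradicting $a_N<_S a_{N+1}$. Your explicit case split between equality and multiplication by an element of $U$ is slightly more careful than the paper, which tacitly uses $a_N\neq a_{N+1}$ to go straight to the second case, but the substance is identical.
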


\begin{proof}
Suppose for a contradiction that $S$ is not $\arn$.  Then there exists an infinite strictly ascending chain $$a_1<_Sa_2<_S\cdots$$ in $S.$  Then, for each $i\in\N,$ there exists $s_i\in S$ such that $a_i=a_{i+1}s_i$.  Since $\f_b$ is surjective, for each $i\in\N$ there exists $s_i'\in S$ such that $\f_b(s_i')=s_i$.  Let $t'$ be a local right identity of $b,$ so that $b=bt'.$  Then $(a_i,b)=(a_{i+1},b)(s_i',t').$  Thus, letting $U=S\s T,$ we have an ascending chain $$(a_1,b)\leq_U(a_2,b)\leq_U\cdots$$ in $U.$  Since $U$ is $\arn$, there exists $N\in\N$ such that $(a_n,b)\,\ar_U\,(a_N,b)$ for all $n\geq N.$  In particular, there exists $(s,t)\in U$ such that $(a_{N+1},b)=(a_N,b)(s,t).$  But then $a_{N+1}=a_N\f_b(s),$ contradicting that $a_N<_Ta_{N+1}$. 
\end{proof}

\begin{thm}\label{thm:surj}
Let $S$ and $T$ be semigroups, and let $\f : T\to\emph{SEnd}(S)$ be a homomorphism.  Then $S\s T$ is $\arn$ if and only if one of the following holds:
\begin{enumerate}
\item both $S$ and $T$ are $\arn$;
\item $S$ is $\arn$ and has no element with a local right identity;
\item $T$ is $\arn$ and has no element with a local right identity.
\end{enumerate}
\end{thm}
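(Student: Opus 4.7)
The plan is to reduce everything to Theorem \ref{thm:sdp}, exploiting the fact that the surjectivity hypothesis forces $\f_b(S)=S$ for every $b\in T$, which trivialises the notion of a $\f$-chain.

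For the ($\Leftarrow$) direction I would proceed as follows. Cases (2) and (3) both fall under Lemma \ref{lem:sdp3}, so $S\s T$ is $\arn$ immediately. For case (1), observe that since $\f_b(S)=S$ for every $b\in T$, any $\f$-chain in $S$ has the form
\[
a_1 S^1 \subseteq a_2 S^1 \subseteq a_3 S^1 \subseteq \cdots
\]
which is just an ascending chain in the poset of principal right ideals of $S$. Since $S$ is $\arn$, this stabilises; combined with $T$ being $\arn$, condition (2) of Theorem \ref{thm:sdp} holds, so $S\s T$ is $\arn$.

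For the ($\Rightarrow$) direction, suppose $U=S\s T$ is $\arn$. I would split on whether $S$ has an element with a local right identity. If $S$ has no such element, then $S$ itself satisfies the hypothesis of case (2) provided $S$ is $\arn$; and $S$ is indeed $\arn$ in this case because otherwise Lemma \ref{lem:sdp1} forces $T$ to be $\arn$, but then case (3) already handles it once we rule out an element of $T$ having a local right identity --- and we can rule that out via Lemma \ref{lem:sdp4}, since any $b\in T$ with a local right identity would (using $\f_b\in\SE(S)$) imply $S$ is $\arn$, a contradiction. So either (2) holds, or $S$ is not $\arn$, $T$ is $\arn$, and $T$ has no element with a local right identity, giving (3).

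If on the other hand $S$ does have an element with a local right identity, then Lemma \ref{lem:sdp2} gives that $T$ is $\arn$. Now I would split further on whether $T$ has an element with a local right identity. If it does, say $b\in T$, then $\f_b\in\SE(S)$ and Lemma \ref{lem:sdp4} applies to yield $S$ is $\arn$, placing us in case (1). If $T$ has no element with a local right identity, then $T$ is $\arn$ and without such an element, which is case (3). The main obstacle is simply keeping this case analysis organised; the heavy lifting has already been done by Lemmas \ref{lem:sdp1}, \ref{lem:sdp2}, \ref{lem:sdp4} and Theorem \ref{thm:sdp}, and the surjectivity assumption is used precisely to invoke Lemma \ref{lem:sdp4} and to collapse $\f$-chains to ordinary chains of principal right ideals.
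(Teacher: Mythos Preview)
Your proposal is correct and uses the same ingredients as the paper: Lemma~\ref{lem:sdp3} and the collapse of $\f$-chains to ordinary principal-right-ideal chains for ($\Leftarrow$), and Lemmas~\ref{lem:sdp1}, \ref{lem:sdp2}, \ref{lem:sdp4} for ($\Rightarrow$). The only difference is organisational: for ($\Rightarrow$) the paper assumes (2) and (3) fail and deduces (1) directly (using Theorem~\ref{thm:sdp} to get $T$ $\arn$, then Lemma~\ref{lem:sdp4} to get $S$ $\arn$), whereas you split on whether $S$ has an element with a local right identity and invoke Lemmas~\ref{lem:sdp1} and \ref{lem:sdp2} explicitly; both routes are equally valid and of comparable length.
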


\begin{proof}
($\Rightarrow$) Assume that (2) and (3) do not hold.  Then $T$ has an element, say $b,$ with a local right identity.  Since $\f_b\in\SE(S),$ we have that $S$ is $\arn$ by Lemma \ref{lem:sdp4}.  Also, it follows from Theorem \ref{thm:sdp} that $T$ is $\arn$.  Thus (1) holds.

($\Leftarrow$) If (2) or (3) holds, then $S\s T$ is $\arn$ by Lemma \ref{lem:sdp3}.  Assume then that (1) holds.  Since $\f_b(S)=S$ for all $b\in T,$ every $\f$-chain in $S$ is an ascending chain of principal right ideals of $S,$ and hence must eventually stabilise as $S$ is $\arn.$  Therefore, since $T$ is $\arn$, we have that $S\s T$ is $\arn$ by Theorem \ref{thm:sdp}.
\end{proof}

\begin{cor}\cite[Theorem 3.13]{Stopar}
Let $S$ and $T$ be semigroups with idempotents, and let $\f : T\to\emph{SEnd}(S)$ be a homomorphism.  Then $S\s T$ is $\arn$ if and only if $S$ and $T$ are $\arn$.
\end{cor}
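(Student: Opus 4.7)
The plan is to derive this corollary as an immediate consequence of Theorem \ref{thm:surj}. The key observation is that any idempotent $e$ in a semigroup satisfies $e=ee$, so $e$ is a local right identity of itself. Therefore, the hypothesis that $S$ and $T$ each contain an idempotent guarantees that both $S$ and $T$ have an element with a local right identity.

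For the forward direction, I would suppose $S\s T$ is $\arn$ and apply Theorem \ref{thm:surj} to conclude that one of conditions (1), (2), (3) must hold. Since $S$ contains an idempotent and hence an element with a local right identity, condition (2) is ruled out. Similarly, since $T$ contains an idempotent, condition (3) is ruled out. Hence condition (1) must hold, giving that both $S$ and $T$ are $\arn$.

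For the backward direction, if $S$ and $T$ are both $\arn$ then condition (1) of Theorem \ref{thm:surj} is satisfied, so the theorem immediately yields that $S\s T$ is $\arn$.

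There is no real obstacle here; the entire content of the corollary is extracting condition (1) from the trichotomy of Theorem \ref{thm:surj} by using the presence of idempotents to eliminate the other two alternatives. The only subtlety worth mentioning explicitly in the write-up is the one-line verification that an idempotent serves as its own local right identity.
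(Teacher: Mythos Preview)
Your proposal is correct and matches the paper's intended derivation: the corollary is placed immediately after Theorem~\ref{thm:surj} with no separate proof, and your argument---using the idempotents to rule out alternatives (2) and (3) so that only (1) remains---is exactly the deduction the paper has in mind.
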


\begin{cor}\label{cor:dp}
Let $S$ and $T$ be semigroups.  Then the direct product $S\times T$ is $\arn$ if and only if one of the following holds:
\begin{enumerate}
\item both $S$ and $T$ are $\arn$;
\item $S$ is $\arn$ and has no element with a local right identity;
\item $T$ is $\arn$ and has no element with a local right identity.
\end{enumerate}
\end{cor}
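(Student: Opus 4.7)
The plan is to recognise the direct product as the special case of a semidirect product in which the action is trivial, and then simply invoke Theorem \ref{thm:surj}. Specifically, I would define $\f : T \to \E(S)$ by $\f_t = id_S$ for every $t \in T$. This is plainly a homomorphism (it is the constant map to the identity endomorphism, and $id_S \circ id_S = id_S$), and the resulting multiplication on $S \times T$ becomes
$$(s,t)(s',t') = (s\,\f_t(s'),\, tt') = (ss',\, tt'),$$
which is precisely the direct product. Thus $S \times T \cong S \s T$ as semigroups for this choice of $\f$.

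Since $id_S$ is a surjective endomorphism of $S$, the homomorphism $\f$ actually takes values in $\SE(S)$, so the hypotheses of Theorem \ref{thm:surj} are satisfied. Applying that theorem verbatim yields that $S \s T$ is $\arn$ if and only if one of the three listed conditions holds, which transfers immediately to $S \times T$.

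There is essentially no obstacle here, the only thing to verify is the purely formal identification of $S \times T$ with $S \s T$ for trivial $\f$; everything else is delegated to Theorem \ref{thm:surj}. The proof is therefore a one-line reduction.
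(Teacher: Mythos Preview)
Your proposal is correct and is exactly the approach taken in the paper: the corollary is stated immediately after Theorem~\ref{thm:surj} with no separate proof, relying on the earlier observation that the direct product is the semidirect product with $\f_t = id_S$ for all $t\in T$, which lands in $\SE(S)$.
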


\section{\large{Sch{\"u}tzenberger Products}\nopunct}
\label{sec:sp}

The Sch{\"u}tzenberger product of semigroups was introduced by Sch{\"u}tzenberger in \cite{Schutz} in relation to the study of finite aperiodic monoids.  It has since found many other useful applications in semigroup theory; see \cite{Margolis,Pin} for instance.

For any set $X,$ let $\P_f(X)$ denote the set of all finite subsets of $X.$  Let $S$ and $T$ be semigroups.  For $s\in S,$ $t\in T$ and $P\in\P_f(S\times T),$ we define
$$sP=\{(sp, q) : (p, q)\in P\}\quad\text{and}\quad Pt=\{(p, qt) : (p, q)\in P\}.$$
The {\em Sch{\"u}tzenberger product} of $S$ and $T,$ denoted by $S\D T,$ is the semigroup with universe $S\times\P_f(S\times T)\times T$ and multiplication given by
$$(s_1, P_1, t_1)(s_2, P_2, t_2)=(s_1s_2, s_1P_2\cup P_1t_2, t_1t_2).$$
Observe that the direct product $S\times T$ embeds into $S\D T$ via $(s,t)\mapsto(s,\emptyset,t)$.  Unlike for $S\times T,$ the multiplication in $S\D T$ is assymmetrical, and hence $S\Diamond T$ is not in general isomorphic to $T\Diamond S$.
The main theme of this section is the relationship between the Sch{\"u}tzenberger product and the direct product with regard to being $\arn$.  We begin with the following lemma.

\begin{lem}
\label{lem:sp}
Let $S$ and $T$ be semigroups.  If $S\D T$ is $\arn$, then so is the direct product $S\times T.$
\end{lem}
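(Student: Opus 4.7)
The plan is to realise $S\times T$ as a right unitary subsemigroup of $S\D T$ via the embedding $(s,t)\mapsto(s,\emptyset,t)$ already mentioned in the text, and then invoke Corollary \ref{cor:ru}. So let $T'=\{(s,\emptyset,t):s\in S,\ t\in T\}\subseteq S\D T$. A direct computation from the defining multiplication shows
$$(s_1,\emptyset,t_1)(s_2,P_2,t_2)=(s_1s_2,\ s_1P_2\cup\emptyset\cdot t_2,\ t_1t_2)=(s_1s_2,\ s_1P_2,\ t_1t_2),$$
so $T'$ is closed under multiplication and the map $(s,t)\mapsto(s,\emptyset,t)$ is a semigroup embedding $S\times T\hookrightarrow S\D T$ with image $T'$.

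The key observation is that $s_1P_2=\{(s_1p,q):(p,q)\in P_2\}$ is empty if and only if $P_2$ is empty. Hence, if $(s_1,\emptyset,t_1)\cdot(s_2,P_2,t_2)$ lies in $T'$, the middle coordinate of the product equals $s_1P_2$, which must be $\emptyset$, forcing $P_2=\emptyset$, i.e.\ $(s_2,P_2,t_2)\in T'$. This shows $T'$ is right unitary in $S\D T$. Applying Corollary \ref{cor:ru} to the $\arn$ semigroup $S\D T$ and its right unitary subsemigroup $T'\cong S\times T$ yields that $S\times T$ is $\arn$.

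I do not anticipate a genuine obstacle here; the only subtlety is keeping track of the asymmetric nature of the Sch\"utzenberger multiplication (the left factor distributes over the middle component of the right factor and vice versa), and verifying that the ``empty middle coordinate'' is preserved by the embedding, which it is precisely because $s_1P_2=\emptyset$ iff $P_2=\emptyset$. Note that the analogous statement for the dual embedding $(s,t)\mapsto(s,\emptyset,t)$ viewed from the right would instead give \emph{left} unitarity of a different copy, but the copy chosen above is tailored to right unitarity, which is exactly what Corollary \ref{cor:ru} needs.
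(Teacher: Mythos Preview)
Your proof is correct and essentially matches the paper's own argument: the paper observes that the complement of $S\times\{\emptyset\}\times T$ in $S\D T$ is a (two-sided) ideal and then invokes Corollary~\ref{cor:comp_left_ideal}, which is precisely the left-ideal route to right unitarity. Your direct verification of right unitarity followed by Corollary~\ref{cor:ru} amounts to the same thing.
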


\begin{proof}
Notice that $S\times\{\emptyset\}\times T\cong S\times T$ and that $S\times\{\emptyset\}\times T$ is a subsemigroup of $S\D T$ such that $(S\D T)\!\setminus\!(S\times\{\emptyset\}\times T)$ is an ideal of $S\D T.$  Therefore, by Corollary \ref{cor:comp_left_ideal}, if $S\D T$ is $\arn$ then is $S\times T.$
\end{proof}

Letting $\{1\}$ denote the trivial group, it is clear that $\{1\}\D T$ is isomorphic to the semigroup $\P_f(T)\times T$ with multiplication given by
$$(P_1,t_1)(P_2,t_2)=(P_1t_2\cup P_2,t_1t_2),$$ where $Pt=\{pt : p\in P\}$ for $P\in\P_f(T)$ and $t\in T.$

\begin{lem}
Let $S$ and $T$ be semigroups, and suppose that there exists an element $a\in S$ that has a local right identity in $S.$  If $S\D T$ is $\arn,$ then $\{1\}\D T$ is $\arn.$
\end{lem}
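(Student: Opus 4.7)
The plan is to argue contrapositively: assume that $\{1\}\D T$ is not $\arn$ and build an infinite strictly ascending chain in $S\D T$. Using the identification $\{1\}\D T\cong\P_f(T)\times T$ with multiplication $(P_1,t_1)(P_2,t_2)=(P_1t_2\cup P_2,t_1t_2)$, pick an infinite strictly ascending chain $(P_1,t_1)<_{\{1\}\D T}(P_2,t_2)<_{\{1\}\D T}\cdots$, and for each $i$ choose a witness $(Q_i,u_i)\in\{1\}\D T$ with $(P_i,t_i)=(P_{i+1},t_{i+1})(Q_i,u_i)$; equivalently, $P_i=P_{i+1}u_i\cup Q_i$ and $t_i=t_{i+1}u_i$.

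Fix a local right identity $e\in S$ for $a$, so $a=ae$. The key idea is to lift the chain into $S\D T$ via $x_i:=(a,\{a\}\times P_i,t_i)$. A direct computation, exploiting the identity $a\cdot(\{e\}\times Q_i)=\{(ae,q):q\in Q_i\}=\{a\}\times Q_i$, should show that
\[
x_{i+1}\cdot(e,\{e\}\times Q_i,u_i)=\bigl(ae,\,\{a\}\times(Q_i\cup P_{i+1}u_i),\,t_{i+1}u_i\bigr)=x_i,
\]
giving an ascending chain $x_1\leq_{S\D T}x_2\leq_{S\D T}\cdots$ in $S\D T$.

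The main obstacle is verifying that this chain is strict, i.e.\ $x_i\not\ar_{S\D T}x_{i+1}$. The plan is to suppose for contradiction that $x_{i+1}=x_i(s,R,v)$ for some $(s,R,v)\in S\D T$. The middle coordinate of this product forces $\{a\}\times P_{i+1}=aR\cup\{a\}\times(P_iv)$, so in particular every element of $aR$ already has first coordinate $a$; setting $Q:=\{q\in T:(a,q)\in aR\}$, this yields $aR=\{a\}\times Q$ with $Q\in\P_f(T)$ and $P_{i+1}=Q\cup P_iv$. Combined with the third-coordinate identity $t_{i+1}=t_iv$, I can then read off a factorisation $(P_i,t_i)(Q,v)=(P_{i+1},t_{i+1})$ inside $\{1\}\D T$, contradicting strictness of the original chain. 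The trick throughout is that pinning all first coordinates of the lifted elements to $a$ lets the local right identity $e$ absorb the $S$-part of any multiplication, so the $\ar$-analysis in $S\D T$ collapses back onto one in $\{1\}\D T$.
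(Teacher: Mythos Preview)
Your argument is correct and essentially identical to the paper's: both lift the chain via $(P_i,b_i)\mapsto(a,\{a\}\times P_i,b_i)$, multiply by $(e,\{e\}\times Q_i,u_i)$ to obtain the ascending chain, and for strictness project the middle coordinate back to $T$ to recover a factorisation in $\{1\}\D T$. One tiny omission: in verifying strictness you only treat the case $x_{i+1}=x_i(s,R,v)$ with $(s,R,v)\in S\D T$, but $x_{i+1}\in x_i(S\D T)^1$ also allows $x_{i+1}=x_i$; this case is trivial (it forces $P_{i+1}=P_i$ and $t_{i+1}=t_i$), and the paper does dispose of it explicitly.
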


\begin{proof}
We prove the contrapositive.  Let $U=\P_f(T)\times T\cong\{1\}\D T$ and let $V=S\D T.$  Assume that $U$ is not $\arn.$  Then there exists an infinite strictly ascending chain
$$(P_1,b_1)<_U(P_2,b_2)<_U\cdots$$
in $U.$  Then, for each $i\in\N,$ there exists $(Q_i,t_i)\in U$ such that $(P_i,b_i)=(P_{i+1},b_{i+1})(Q_i,t_i).$  Let $s$ be a local right identity of $a$ (so $as=a$), and for each $i\in\N$ let $P_i'=\{a\}\times P_i$ and $Q_i'=\{s\}\times Q_i$.  We then have
$$(a,P_i',b_i)=(a,P_{i+1}',b_{i+1})(s,Q_i',t_i).$$
Suppose for a contradiction that there exists $i\in\N$ such that $(a,P_{i+1}',b_{i+1})\in(a,P_i',b_i)V^1.$  Then, either $(a,P_{i+1}',b_{i+1})=(a,P_i',b_i)$ or there exists $(s',Q,t')\in V$ such that 
$$(a,P_{i+1}',b_{i+1})=(a,P_i',b_i)(s',Q,t')=(as',P_i't'\cup aQ, b_it').$$
But then either $(P_{i+1},b_{i+1})=(P_i,b_i)$ or, letting $\pi_T$ denote the projection map $S\times T\to T,$ we have 
$$(P_{i+1},b_{i+1})=(P_i,b_i)(Q\pi_T,t')$$
so that $(P_{i+1},b_{i+1})\in(P_i,b_i)U^1$, a contradiction.  Thus, we have an infinite strictly ascending chain
$$(a,P_1',b_1)<_V(a,P_2',b_2)<_V\cdots$$
in $V.$  Hence $V$ is not $\arn.$
\end{proof}

We now provide an example of an $\arn$ semigroup $T$ such that $\{1\}\D T$ is not $\arn.$  In particular, the converse of Lemma \ref{lem:sp} does not hold (since $\{1\}\times T\cong T$).

\begin{ex}
Let $T$ be the monoid $S$ from Example \ref{ex:sdp}; that is, $$T=\{x^n : n\in\N_0\}\sqcup\{a_i : i\in\Z\}$$ with product given by
$$x^mx^n=x^{m+n},\quad a_ix^n=a_{i-n}\quad\text{and}\quad x^na_j=a_ia_j=a_j.$$
As shown in Example \ref{ex:sdp}, $T$ is $\arn$.
We prove that $U=\P_f(T)\times T\cong\{1\}\D T$ is not $\ar$-noetherian.

Consider $i\in\N.$  We have $(\{a_i\},a_{i+1})=(\{a_{i+1}\},a_{i+2})(\emptyset,x).$
Suppose for a contradiction that $(\{a_{i+1}\},a_{i+2})=(\{a_i\},a_{i+1})(P,t)$ for some $(P,t)\in U.$  Then $a_{i+1}=a_it$ and $a_{i+2}=a_{i+1}t.$  We cannot have $t\in\{x\}^{\ast}$, for then we would have $a_it\in\{a_j : j\leq i\},$ and we cannot have $t\in\{a_j : j\in\Z\},$ for then we would have $a_it=a_{i+1}t=t.$  Thus, no such $t$ exists, and we have the desired contradiction.  It follows that we have an infinite ascending chain
$$(\{a_1\},a_2)<_U(\{a_2\},a_3)<_U\cdots$$
in $U,$ so $U$ is not $\ar$-noetherian.
\end{ex}

In the remainder of this section, we explore situations in which $S\D T$ being $\arn$ is equivalent to $S\times T$ being $\arn.$  This turns out to be the case when $T$ is finite or cancellative.

Since $S$ and $T$ are homomorphic images of $S\D T,$ by Proposition \ref{prop:nolri} we have:

\begin{lem}
\label{lem:sp,nolri}
Let $S$ and $T$ be semigroups.  If either of $S$ and $T$ is $\arn$ and has no element with a local right identity, then $S\D T$ is $\arn$.
\end{lem}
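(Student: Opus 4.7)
The plan is to apply Corollary \ref{cor:nolri} directly, using the two coordinate projections from $S \D T$ onto its factors. The only thing to check is that these projections are semigroup homomorphisms, after which the corollary finishes the argument without further work.

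First I would verify that the projection $\pi_S : S \D T \to S$ defined by $(s, P, t) \mapsto s$ is a semigroup homomorphism. Reading off the first coordinate of the product $(s_1, P_1, t_1)(s_2, P_2, t_2) = (s_1 s_2, s_1 P_2 \cup P_1 t_2, t_1 t_2)$, we get $s_1 s_2 = \pi_S(s_1, P_1, t_1)\, \pi_S(s_2, P_2, t_2)$. Symmetrically, $\pi_T : S \D T \to T$, $(s, P, t) \mapsto t$, is a homomorphism since the third coordinate of the product is $t_1 t_2$.

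Now suppose $S$ is $\arn$ and has no element with a local right identity. Applying Corollary \ref{cor:nolri} to the homomorphism $\pi_S : S \D T \to S$ yields at once that $S \D T$ is $\arn$ (and moreover has no element with a local right identity). The case where $T$ has the property is completely symmetric, using $\pi_T$ in place of $\pi_S$.

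There is no real obstacle here: all the substantive work was done in setting up Proposition \ref{prop:nolri} and Corollary \ref{cor:nolri} earlier in the paper. The only minor point worth flagging is that the proof as sketched delivers the stronger conclusion that $S \D T$ itself has no element with a local right identity, which will likely be useful in subsequent arguments even though it is not stated in the lemma.
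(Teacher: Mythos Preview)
Your proof is correct and follows essentially the same approach as the paper, which simply notes that $S$ and $T$ are homomorphic images of $S\D T$ and invokes Proposition~\ref{prop:nolri} (equivalently, Corollary~\ref{cor:nolri}). Your explicit verification that the coordinate projections are homomorphisms is a welcome detail, but otherwise the arguments coincide.
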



Lemmas \ref{lem:sp} and \ref{lem:sp,nolri} and Corollary \ref{cor:dp} yield:

\begin{prop}
Let $S$ and $T$ be semigroups where $S$ (resp.\ T) is not $\ar$-noetherian.  Then the following are equivalent:
\begin{enumerate}
\item $S\D T$ is $\arn$;
\item $S\times T$ is $\arn$;
\item $T$ (resp.\ $S$) is $\arn$ and has no element with a local right identity.
\end{enumerate}
\end{prop}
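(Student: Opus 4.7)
The plan is to assemble this from the three results already in hand. By Lemma \ref{lem:sp} we immediately have (1) $\Rightarrow$ (2), and by Lemma \ref{lem:sp,nolri} we have (3) $\Rightarrow$ (1). The only remaining task is to establish (2) $\Leftrightarrow$ (3).

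For this, I would apply Corollary \ref{cor:dp} directly. Recall that Corollary \ref{cor:dp} states that $S\times T$ is $\arn$ if and only if one of the following holds: (a) both $S$ and $T$ are $\arn$; (b) $S$ is $\arn$ and has no element with a local right identity; (c) $T$ is $\arn$ and has no element with a local right identity. Under the standing hypothesis that $S$ is not $\arn$, options (a) and (b) are ruled out, so (2) reduces to precisely (c), which is the statement of (3). In the dual case where $T$ is not $\arn$, options (a) and (c) are ruled out, and (2) reduces to (b), again matching (3).

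Combining (1) $\Rightarrow$ (2), (2) $\Leftrightarrow$ (3), and (3) $\Rightarrow$ (1) closes the cycle and yields the equivalence. There is no real obstacle here: the proposition is essentially a bookkeeping consequence of Lemmas \ref{lem:sp}, \ref{lem:sp,nolri} and Corollary \ref{cor:dp}, with the non-$\arn$-ness hypothesis on $S$ (resp.\ $T$) serving purely to collapse the three-way disjunction in Corollary \ref{cor:dp} down to a single clause.
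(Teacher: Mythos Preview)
Your proof is correct and matches the paper's approach exactly: the paper simply states that the proposition follows from Lemmas \ref{lem:sp}, \ref{lem:sp,nolri} and Corollary \ref{cor:dp}, and your sketch spells out precisely how these three results combine.
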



\begin{thm}
Let $S$ and $T$ be semigroups where $T$ is finite.  Then the following are equivalent:
\begin{enumerate}
\item $S\D T$ is $\arn$;
\item $S\times T$ is $\arn$;
\item $S$ is $\arn$.
\end{enumerate}
\end{thm}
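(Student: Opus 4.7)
The plan is to prove the three stated equivalences in turn: $(1) \Rightarrow (2)$ follows at once from Lemma \ref{lem:sp}, and the equivalence $(2) \Leftrightarrow (3)$ will come from Corollary \ref{cor:dp} together with the observation that a finite $T$ is automatically $\arn$ and contains an idempotent, hence has an element with a local right identity. This renders condition (3) of Corollary \ref{cor:dp} vacuous, while each of the remaining conditions of that corollary is equivalent to $S$ being $\arn$. The substantive content of the theorem therefore lies in $(3) \Rightarrow (1)$.

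For $(3) \Rightarrow (1)$, I would assume $S$ is $\arn$ and $T$ is finite, and argue by contradiction via Proposition \ref{prop:equiv_conditions}(5). So suppose $U = S \D T$ admits an infinite strictly ascending chain $(u_n)_{n \geq 1}$ with $u_n = (s_n, P_n, t_n)$. The coordinate projections $\pi_S : U \to S$ and $\pi_T : U \to T$ are homomorphisms, so they produce ascending chains $(s_n)$ and $(t_n)$ in $S$ and $T$ respectively. Since $S$ is $\arn$ and the finite $T$ is automatically $\arn$, both chains stabilize in the $\ar$-preorder, and by truncation I may assume $s_n \ar_S s_1$ and $t_n \ar_T t_1$ for every $n$. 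Because the $\ar_T$-class of $t_1$ is finite, I can pass to an infinite subsequence---still strictly ascending, since strict inequality is transitive along the chain---on which $t_n = t^*$ is constant.

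For each pair $n < m$ in the resulting chain, I would fix a witness $v_{n,m} = (a_{n,m}, Q_{n,m}, b_{n,m}) \in U$ with $u_n = u_m v_{n,m}$. The equation $t^* = t^* b_{n,m}$ forces $b_{n,m}$ to lie in the finite set $E \subseteq T$ of local right identities of $t^*$. Ramsey's theorem applied to the $E$-edge-coloring of the complete graph on positive integers then yields an infinite monochromatic sub-chain on which $b_{n,m} = b^*$ is constant; the relation $P_n = s_m Q_{n,m} \cup P_m b^*$ therefore holds for all $n < m$ in the sub-chain, and in particular $P_m b^* \subseteq P_1$. Since $\P(P_1)$ is finite, a further pigeonhole step produces a sub-chain on which $P_m b^* = X$ is constant. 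Consequently $X \subseteq P_n$ for every $n$, and every element of $P_n \setminus X$ has its first coordinate in $s_m S$ for every $m > n$ in the sub-chain.

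The hardest and most delicate step will be to combine these structural facts with the strict-ascending condition to produce a contradiction. Unpacking $u_{n+1} \notin u_n U^1$ at the choice $d = b^*$---or, if idempotency is needed, at the idempotent power $b^{*\omega} \in E$ obtained from the cyclic subsemigroup of $T$ generated by $b^*$---shows that for each $n$ either $s_{n+1} \notin s_n S$, or the finite set $P_{n+1} \setminus X$ contains an element whose first coordinate lies outside $s_n S$. The hard part will be to iterate this dichotomy, in tandem with the inclusion $P_n \setminus X \subseteq s_m S \times T$ for $m > n$, and---perhaps after one more Ramsey-type restriction stabilising the positions of the $s_n$'s within their common $\ar_S$-class---to extract from the sub-chain an infinite strictly ascending chain in the $\ar_S$-preorder of $S$ itself, contradicting the assumption that $S$ is $\arn$. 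The subtle point is that $s_n = s_{n+1}$ is permitted exactly when $s_n$ admits a local right identity in $S$, so the precise combinatorial mechanism producing the required chain in $S$ depends on which elements of the $\ar_S$-class of $s_1$ possess local right identities.
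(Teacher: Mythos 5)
Your treatment of (1)$\Rightarrow$(2) via Lemma \ref{lem:sp} and of (2)$\Leftrightarrow$(3) via Corollary \ref{cor:dp} (a finite $T$ is $\arn$ and its idempotent has a local right identity, killing condition (3) of that corollary) is correct and is exactly what the paper does. The genuine gap is in (3)$\Rightarrow$(1): after your Ramsey/pigeonhole stabilisations (constant $t^*$, constant witness colour $b^*$, constant $X=P_mb^*$) you never derive the contradiction --- you explicitly defer ``the hardest and most delicate step,'' and the direction you sketch for it cannot work. By that stage the first coordinates $s_n$ all lie in a single $\ar_S$-class, so no infinite strictly ascending $\ar_S$-chain can be extracted from them; and your stabilisations control only $P_nb^*$, not $P_n$ itself, which a priori still ranges over infinitely many sets. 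The missing idea is the paper's control of the middle coordinate: from $P_i=P_{i+1}t_i\cup a_{i+1}Q_i$ one gets a \emph{descending} chain $P_1\pi_S\supseteq P_2\pi_S\supseteq\cdots$ of finite sets, which stabilises, so that eventually every $P_n$ is a subset of the single finite set $P_N\pi_S\times T$. Plain pigeonhole (no Ramsey) then yields $m\leq n-2$ with $P_m=P_n$ and $b_m=b_n$, and the explicit witness $(s_ms',Q_m,t_m)$ --- where $a_n=a_ms'$ comes from $a_m\,\ar_S\,a_n$ together with $a_m\in a_nS$ --- satisfies $(a_{m+1},P_{m+1},b_{m+1})(s_ms',Q_m,t_m)=(a_n,P_n,b_n)$, contradicting $(a_{m+1},P_{m+1},b_{m+1})<_U(a_n,P_n,b_n)$.

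For what it is worth, your structural facts are only a short step away from a correct (if heavier) finish, just not by the route you propose. Since $s_m\,\ar_S\,s_n$ gives $s_mS\subseteq s_nS$ for $m>n$ in your sub-chain, the relation $P_m\setminus X\subseteq s_{m'}Q_{m,m'}$ (for any larger $m'$ in the sub-chain) shows every element of $P_m\setminus X$ has first coordinate in $s_nS$; moreover the horn $s_m\notin s_nS$ of your dichotomy is impossible, since $s_m\in s_nS^1$ would then force $s_m=s_n$, while $s_n=s_ma_{n,m}\in s_mS$ gives $s_m\in s_nS$ after all. Hence for consecutive terms $n<m$ of your sub-chain a witness $(c,R,b^*)$ with $u_m=u_n(c,R,b^*)$ exists, contradicting strictness directly. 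As written, however, the proposal stops short of any such argument and gestures at a dead end (extracting a strictly ascending chain in $S$), so it does not constitute a proof of the one implication that carries the substance of the theorem.
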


\begin{proof}
(1)$\Rightarrow$(2) is Lemma \ref{lem:sp}, and (2)$\Rightarrow$(3) follows from Corollary \ref{cor:dp}.

(3)$\Rightarrow$(1).  Let $U=S\D T,$ and suppose for a contradiction that $U$ is not $\arn.$  Then there exists an infinite strictly ascending chain
$$(a_1, P_1, b_1)<_U(a_2, P_2, b_2)<_U\cdots$$
in $U.$  For each $i\in\N,$ there exists $(s_i, Q_i, t_i)\in U$ such that 
$$(a_i, P_i, b_i)=(a_{i+1}, P_{i+1}, b_{i+1})(s_i, Q_i, t_i)=(a_{i+1}s_i, P_{i+1}t_i\cup a_{i+1}Q_i, b_{i+1}t_i),$$
so $a_i=a_{i+1}s_i$, $P_i=P_{i+1}t_i\cup a_{i+1}Q_i$ and $b_i=b_{i+1}t_i$.  Thus, we have an ascending chain $$a_1\leq_Sa_2\leq_S\cdots$$ in $S.$  Observe that for each $i\in\N$ we have 
$$P_i\pi_S=P_{i+1}\pi_S\cup a_{i+1}Q_i\pi_S.$$
It follows that $$P_1\pi_S\supseteq P_2\pi_S\supseteq\cdots.$$
Since $S$ is $\arn$, and there does not exist any infinite strictly descending chain of finite subsets of $S,$ there exists $N\in\N$ such that $a_n\,\ar_S\,a_N$ and $P_n\pi_S=P_N\pi_S$ for all $n\geq N.$  Then, for each $n\geq N,$ we have $P_n\subseteq P_N\pi_S\times T.$  Since $P_N$ and $T$ are finite (and hence $P_N\pi_S\times T$ is finite), it follows that there exist $m, n\geq N$ with $m\leq n-2$ such that $P_m=P_n$ and $b_m=b_n$.  Since $a_m\,\ar_S\,a_n$ and $a_m\in a_nS,$ there exists $s'\in S$ such that $a_n=a_ms'.$  Then we have
\begin{align*}
(a_{m+1},P_{m+1},b_{m+1})(s_ms',Q_m,t_m)&=(a_{m+1}s_ms',P_{m+1}t_m\cup a_{m+1}Q_m,b_{m+1}t_m)\\&=(a_ms',P_m,b_m)\\
&=(a_n,P_n,b_n).
\end{align*}
But this contradicts that $(a_{m+1},P_{m+1},b_{m+1})<_U(a_n,P_n,b_n).$  Hence $U$ is $\arn.$
\end{proof}

\begin{thm}
\label{thm:sp}
Let $S$ and $T$ be semigroups where $T$ is cancellative.  Then $S\D T$ is $\arn$ if and only if $S\times T$ is $\arn$.
\end{thm}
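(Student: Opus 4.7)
The forward implication is Lemma \ref{lem:sp}. For the converse, my plan is to case-split using Corollary \ref{cor:dp} applied to $S\times T$: either (a) both $S$ and $T$ are $\arn$, or (b) at least one of $S$, $T$ is $\arn$ and has no element with a local right identity. Case (b) is immediate from Lemma \ref{lem:sp,nolri}, so it suffices to treat case (a) under the additional hypothesis that each of $S$ and $T$ has an element with a local right identity (otherwise we are in (b)). A short cancellativity computation shows that in a cancellative semigroup, a local right identity of any element is automatically a two-sided identity; hence $T$ is a cancellative monoid, and in particular any two $\ar_T$-related elements of $T$ differ by a unit of $T$ on the right.

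Assume for a contradiction that $U := S\D T$ admits an infinite strictly ascending chain $\{(a_i,P_i,b_i)\}_{i\in\N}$, with $(a_i,P_i,b_i) = (a_{i+1},P_{i+1},b_{i+1})(s_i,Q_i,t_i)$, so $a_i = a_{i+1}s_i$, $P_i = P_{i+1}t_i\cup a_{i+1}Q_i$ and $b_i = b_{i+1}t_i$. The $\arn$ assumption on $S$ and $T$ supplies $N\in\N$ such that $a_n\,\ar_S\,a_N$ and $b_n\,\ar_T\,b_N$ for all $n\geq N$. Writing $b_n = b_N u_n$ for units $u_n\in T$, the relation $b_n = b_{n+1}t_n$ forces $t_n = u_{n+1}^{-1}u_n$, itself a unit of $T$.

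The key step is the normalisation $\tilde{P}_n := P_n u_n^{-1}\in\P_f(S\times T)$. A direct calculation using $P_n = P_{n+1}t_n\cup a_{n+1}Q_n$ and $t_n u_n^{-1} = u_{n+1}^{-1}$ yields $\tilde{P}_n = \tilde{P}_{n+1}\cup a_{n+1}(Q_n u_n^{-1})$, so $(\tilde{P}_n)_{n\geq N}$ is a descending chain of finite subsets of $S\times T$ and therefore stabilises at some index $M\geq N$. I will also observe that each $a_n$ with $n\geq N$ has a local right identity $e_n\in S$: combining $a_n = a_{n+1}s_n$ with a witness of $a_{n+1}\in a_n S^1$ produces such an element.

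To derive the contradiction, fix $m > n \geq M$ and set $t := u_n^{-1}u_m$. Then $b_n t = b_m$ and $P_n t = \tilde{P}_n u_m = \tilde{P}_m u_m = P_m$. Picking $s\in S$ with $a_n s = a_m$ (using $e_n$ when $a_m = a_n$, otherwise using $a_m\in a_n S$), we obtain $(a_n,P_n,b_n)(s,\emptyset,t) = (a_m,P_m,b_m)$, hence $(a_m,P_m,b_m)\,\ar_U\,(a_n,P_n,b_n)$, which contradicts strict ascent of the chain. The main obstacle is recognising the normalisation $P_n\mapsto P_n u_n^{-1}$: it is the cancellativity and (forced) monoidness of $T$ that permit this manipulation and turn the a priori unconstrained middle coordinate into a descending chain of finite sets.
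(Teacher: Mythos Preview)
Your argument is correct. The overall architecture matches the paper's proof---forward by Lemma~\ref{lem:sp}, converse by contradiction after reducing via Corollary~\ref{cor:dp} and Lemma~\ref{lem:sp,nolri} to the case where both $S$ and $T$ are $\arn$ and $T$ is a cancellative monoid---but your handling of the middle coordinate is genuinely different and rather cleaner. The paper first observes that cancellativity makes each map $P_{i+1}\to P_{i+1}t_i$ a bijection, so the sequence $|P_i|$ is eventually constant and hence $P_n=P_{n+1}t_n$ for large $n$; it then labels the elements of each $P_n$ as $\{(u_1,v_{n,1}),\dots,(u_m,v_{n,m})\}$ with $v_{n,j}=v_{n+1,j}t_n$, runs $m+1$ separate ascending chains in $T$ (for the $b_n$ and each $v_{\cdot,j}$), stabilises them all, and uses cancellativity to force the resulting right-multipliers to coincide, yielding $P_{n+1}=P_nx_n$. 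Your unit normalisation $\tilde P_n=P_nu_n^{-1}$ bypasses all of this bookkeeping: once $t_n=u_{n+1}^{-1}u_n$ is recognised as a unit, the identity $\tilde P_n=\tilde P_{n+1}\cup a_{n+1}(Q_nu_n^{-1})$ gives a single descending chain of finite sets, and the contradiction follows directly. The price is that you invoke the group of units of $T$ explicitly, whereas the paper's argument stays closer to bare cancellativity until the final step; but since both proofs require $T$ to be a monoid anyway, this is no loss of generality, and your route is shorter.
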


\begin{proof}
($\Rightarrow$).  This follows immediately from Lemma \ref{lem:sp}.

($\Leftarrow$)  Let $U=S\D T,$ and suppose for a contradiction that $U$ is not $\arn$.  Then, by Lemma \ref{lem:sp,nolri} and Corollary \ref{cor:dp}, both $S$ and $T$ are $\arn$ and have elements with local right identities.  It is straightforward to show that a cancellative semigroup has an element with a local right identity if and only it is a monoid; thus $T$ is a monoid.

Now, there exists an infinite strictly ascending chain
$$(a_1, P_1, b_1)<_U(a_2, P_2, b_2)<_U\cdots$$
in $U.$  For each $i\in\N,$ there exists $(s_i, Q_i, t_i)\in U$ such that 
$$(a_i, P_i, b_i)=(a_{i+1}, P_{i+1}, b_{i+1})(s_i, Q_i, t_i)=(a_{i+1}s_i, P_{i+1}t_i\cup a_{i+1}Q_i, b_{i+1}t_i),$$
so $a_i=a_{i+1}s_i$, $P_i=P_{i+1}t_i\cup a_{i+1}Q_i$ and $b_i=b_{i+1}t_i$.
Thus, we have ascending chains
$$a_1\leq_Sa_2\leq_S\cdots\;\;\text{ and }\;\;b_1\leq_Tb_2\leq_T\cdots$$
in $S$ and $T,$ respectively.  Since $S$ is $\arn$, there exists some $N\in\N$ such that $a_n\,\ar_S\,a_N$ for all $n\geq N.$

Now, it follows from the cancellativity of $T$ that the maps
$$P_{i+1}\to P_{i+1}t_i, (x,y)\mapsto (x,yt_i)\;(i\in\mathbb{N})$$
are bijections.  Using the fact that $P_{i+1}t_i\subseteq P_i$, we deduce that we have a chain $$|P_1|\geq|P_2|\geq\cdots$$ of non-negative integers.  This chain must eventually stabilise; we may assume without loss of generality that $|P_n|=|P_N|$ for all $n\geq N.$  Then $P_n=P_{n+1}t_n$ for all $n\geq N.$  In particular, we have 
$$P_N\pi_S=P_{N+1}\pi_S=\cdots.$$
Let $|P_N|=m,$ and for all $n\geq N$ let
$$P_n=\{(u_1, v_{n,1}), \dots, (u_m, v_{n,m})\}$$
such that $v_{n,j}=v_{n+1,j}t_n$ for each $j\in\{1, \dots, m\}.$
Then we have ascending chains
$$\begin{matrix}
b_N \!&\! \leq_T \!&\! b_{N+1} \!&\! \leq_T \!&\! \cdots\\
v_{N,1} \!&\! \leq_T \!&\! v_{N+1,1} \!&\! \leq_T \!&\! \cdots\\
v_{N,2} \!&\! \leq_T \!&\! v_{N+1,2} \!&\! \leq_T \!&\! \cdots\\
& & \vdots & &\\
v_{N,m} \!&\! \leq_T \!&\! v_{N+1,m} \!&\! \leq_T \!&\! \cdots  
\end{matrix}$$
Since $T$ is $\arn$, the above chains must all eventually stabilise.  So, there exists $N'\geq N$ such that, for all $n\geq N',$ we have $b_n\,\ar_T\,b_{N'}$ and $v_{n,j}\,\ar_T\,v_{N',j}$ for each $j\in\{1, \dots, m\}.$
Consider $n\geq N'.$  There exists $s_n'\in S$ such that $a_{n+1}=a_ns_n'.$  Also, there exists $x_n\in T$ such that $b_{n+1}=b_nx_n,$ and for each $j\in\{1, \dots, m\}$ there exists $x_{n,j}\in T$ such that $v_{n+1,j}=v_{n,j}x_{n,j}$.  Then $b_n=b_nx_nt_n$, and $v_{n,j}=v_{n,j}x_{n,j}t_n$ for each $j\in\{1, \dots, m\}.$  Since $T$ is cancellative, it follows that $x_nt_n=1$ and $x_{n,j}t_n=1$ for each $j\in\{1, \dots, m\}.$  By cancellativity again we have
$$x_n=x_{n,1}=\dots=x_{n,m}.$$
It follows that $(u_{n,j},v_{n+1,j})=(u_{n,j},v_{n,j}x_n)$
for each $j\in\{1, \dots, m\},$ and hence $P_{n+1}=P_nx_n$.
Thus, we have
$$(a_{n+1}, P_{n+1}, b_{n+1})=(a_n, P_n, b_n)(s_n', \emptyset, x_n).$$
But this contradicts the assumption that $(a_n, P_n, b_n)<_U(a_{n+1}, P_{n+1}, b_{n+1}).$  Hence $U$ is $\arn$.
\end{proof} 

By Theorem \ref{thm:sp} and Corollary \ref{cor:dp}, we have:

\begin{cor}
\label{cor:sp,group}
Let $S$ be a semigroup and let $G$ be a group.  Then the following are equivalent:
\begin{enumerate}
\item $S\D G$ is $\arn$;
\item $S\times G$ is $\arn$;
\item $S$ is $\arn$.
\end{enumerate}
\end{cor}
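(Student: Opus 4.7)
The plan is to deduce this corollary immediately from Theorem \ref{thm:sp} together with Corollary \ref{cor:dp}, using two elementary observations about groups: every group is cancellative, and every element of a group has a local right identity (namely the group identity).

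For the equivalence (1)$\Leftrightarrow$(2), I would simply invoke Theorem \ref{thm:sp} with $T=G$: since $G$ is cancellative, $S \D G$ is $\ar$-noetherian if and only if $S \times G$ is. This is a direct application with no work to do.

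For the equivalence (2)$\Leftrightarrow$(3), I would apply Corollary \ref{cor:dp} and check which of its three conditions can hold when $T=G$. First observe that $G$ is trivially $\arn$ (a group has only one $\ar$-class) but that every element of $G$ has a local right identity, so condition (3) of Corollary \ref{cor:dp} (requiring $G$ to have no element with a local right identity) cannot hold. Thus $S\times G$ is $\arn$ if and only if condition (1) or condition (2) of Corollary \ref{cor:dp} holds, and both of these force $S$ to be $\arn$; conversely, if $S$ is $\arn$ then condition (1) of Corollary \ref{cor:dp} is satisfied since $G$ is $\arn$, giving that $S\times G$ is $\arn$.

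There is essentially no obstacle here: the corollary is a straightforward specialisation of already-established results, and the only ingredient beyond citing them is the routine observation that the group identity serves as a local right identity for every element of $G$, which kills the asymmetric branches of Corollary \ref{cor:dp} and collapses all three formulations of being $\arn$ into the single condition that $S$ is $\arn$.
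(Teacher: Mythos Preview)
Your proposal is correct and follows exactly the same approach as the paper, which simply states that the corollary follows from Theorem~\ref{thm:sp} and Corollary~\ref{cor:dp}. Your elaboration of why the alternatives in Corollary~\ref{cor:dp} collapse when $T=G$ is accurate and makes explicit what the paper leaves implicit.
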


\section{\large{Free Products}\nopunct}
\label{sec:fp}


Let $S_i$ ($i\in I$) be a collection of pairwise disjoint semigroups.  Let $S$ be the set of all finite non-empty sequences $(a_1,\dots,a_m)$ where $a_j\in\bigcup_{i\in I}S_i$ ($1\leq j\leq m$) and each $a_k$ belongs to a different $S_i$ to that of $a_{k+1}$ ($1\leq k\leq m-1$).
Define a multiplication on $S$ as follows:
$$(a_1,\dots,a_m)(b_1,\dots,b_n)=\begin{cases}
(a_1,\dots,a_m,b_1,\dots,b_n)&\text{if }a_m\in S_i, b_1\in S_j\text{ where }i\neq j,\\
(a_1,\dots,a_mb_1,\dots,b_n)&\text{if }a_m, b_1\in S_i\text{ for some }i\in I.
\end{cases}$$
It is straightforward to verify that this multiplication is associative.  The semigroup $S$ under this multiplication is called the {\em semigroup free product} of $S_i$ ($i\in I$) and is denoted by $\prod\!\!\!\!~^{\scaleto{*}{6pt}}\{S_i : i\in I\}.$

Now suppose that the semigroups $S_i$ ($i\in I$) are monoids with identities $1_i$, respectively.  Let $\rho$ be the congruence on $\prod\!\!\!\!~^{\scaleto{*}{6pt}}\{S_i : i\in I\}$ generated by 
$$\{(1_i,1_j) : i, j\in I, i\neq j\},$$
and denote the $\rho$-class $\{1_i : i\in I\}$ by 1.
The {\em monoid free product} of $S_i$ ($i\in I$), denoted by $\prod\!\!\!\!~^{\scaleto{*}{6pt}}_1\{S_i : i\in I\},$ is the monoid $\prod\!\!\!\!~^{\scaleto{*}{6pt}}\{S_i : i\in I\}/\rho$ with identity 1. 

We note that the monoid free product of groups coincides with the group free product \cite[p.\ 266]{Howie}.

The following result provides necessary and sufficient conditions for a semigroup free product to be $\arn$.

\begin{thm}
\label{thm:sfp}
Let $S_i$ ($i\in I$) be a collection of pairwise disjoint semigroups.  Then the semigroup free product $\prod\!\!\!\!~^{\scaleto{*}{6pt}}\{S_i : i\in I\}$ is $\arn$ if and only if each $S_i$ ($i\in I$) is $\arn$.
\end{thm}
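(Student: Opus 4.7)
The plan is to prove each direction separately; write $S := \prod\!\!\!\!~^{\scaleto{*}{6pt}}\{S_i : i\in I\}$ throughout.

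For the forward implication, I would identify each $S_i$ with the subsemigroup of length-one sequences of $S$. A quick case check using the multiplication rule shows that $S\setminus S_i$ is a left ideal of $S$: for any $a\in S$ and $b\in S\setminus S_i$, the product $ab$ has length at least two unless a boundary merge collapses the result to length one, and the latter requires both $a$ and $b$ to be length-one sequences lying in a common $S_j$; but then $b\in S_j\setminus S_i$ forces $j\neq i$, so $ab\in S_j\subseteq S\setminus S_i$. Corollary \ref{cor:comp_left_ideal} then yields that each $S_i$ is $\arn$.

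For the reverse implication, I would take an arbitrary ascending chain $a^{(1)}\leq_S a^{(2)}\leq_S\cdots$ in $S$ and analyse how lengths behave. Writing $a^{(k)}=(a^{(k)}_1,\ldots,a^{(k)}_{m_k})$ and each strict step as $a^{(k)}=a^{(k+1)}c^{(k)}$, the multiplication rule forces $m_k\in\{m_{k+1}+|c^{(k)}|,\,m_{k+1}+|c^{(k)}|-1\}$. Since $|c^{(k)}|\geq 1$, the sequence $m_1,m_2,\ldots$ is non-increasing and so stabilises at some value $m$ from an index $N$ onwards. Beyond that point, every strict step is forced into the merging case with $|c^{(k)}|=1$, which has two consequences: the first $m-1$ components of $a^{(k)}$ agree with those of $a^{(k+1)}$, and all the varying $m$-th components $a^{(N)}_m,a^{(N+1)}_m,\ldots$ lie in a common factor $S_i$ and satisfy $a^{(k)}_m\in a^{(k+1)}_m S_i^1$.

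This produces an ascending chain $a^{(N)}_m\leq_{S_i} a^{(N+1)}_m\leq_{S_i}\cdots$ inside $S_i$. Since $S_i$ is $\arn$ by hypothesis, there exists $N'\geq N$ with $a^{(n)}_m \,\mathcal{R}_{S_i}\, a^{(N')}_m$ for all $n\geq N'$; writing $a^{(N')}_m=a^{(n)}_m t$ for some $t\in S_i^1$ and combining with the now-constant prefix $a^{(N')}_1,\ldots,a^{(N')}_{m-1}$, the multiplication rule in $S$ gives $a^{(N')}\in a^{(n)}S^1$, hence $a^{(n)}\,\mathcal{R}_S\,a^{(N')}$ for all $n\geq N'$. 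The main technical obstacle is the length analysis that simultaneously freezes the first $m-1$ coordinates and traps all subsequent $m$-th coordinates in a single $S_i$, since this is exactly what reduces the chain problem in $S$ to an application of the hypothesis in one factor.
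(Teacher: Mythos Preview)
Your proposal is correct and follows essentially the same approach as the paper. The reverse implication is identical in structure: both arguments use the length function to show the chain of lengths is non-increasing, stabilise it, freeze the first $m-1$ coordinates, and reduce to a chain in a single factor $S_i$. For the forward implication you observe that $S\setminus S_i$ is a left ideal and invoke Corollary~\ref{cor:comp_left_ideal}, whereas the paper verifies directly that $S_i$ is $\ar$-preserving and invokes Proposition~\ref{prop:R-preserving}; since the former corollary is itself derived from the latter proposition via the chain ``complement a left ideal $\Rightarrow$ right unitary $\Rightarrow$ $\ar$-preserving'', the two routes are minor variants of one another rather than genuinely different arguments.
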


\begin{proof} 
Let $S=\prod\!\!\!\!~^{\scaleto{*}{6pt}}\{S_i : i\in I\}.$  Notice that each $S_i$ embeds into $S$ via $a\mapsto(a)$; we shall identify $S_i$ with its image under this mapping.  We denote the `length' of $u\in S$ by $|u|$, i.e.\ if $u=(a_1,\dots,a_m)$ then $|u|=m.$  Oberve that for any $u, v\in S$ we have $|uv|\in\{|u|+|v|-1,|u|+|v|\}.$

($\Rightarrow$) We claim that each $S_i$ is an $\ar$-preserving subsemigroup of $S,$ and is hence $\arn$ by Proposition \ref{prop:R-preserving}.  Indeed, let $a\leq_Sb$ where $a, b\in S_i$.  Either $a=b$ or $a=bs$ for some $s\in S.$  In the latter case, we must have $s\in S_i$, for otherwise $|bs|=|b|+|s|>1=|a|.$  Thus $a\leq_{S_i}b,$ as required.

($\Leftarrow$) Consider an ascending chain 
$$u_1\leq_Su_2\leq_S\cdots$$
in $S.$  Then, for each $n\in\N,$ there exists $v_n\in S^1$ such that $u_n=u_{n+1}v_n$.  Then $$|u_1|\geq|u_2|\geq\cdots.$$ 
Hence, there exists $N\in\N$ such that $|u_n|=|u_N|$ for all $n\geq N.$  Let $m=|U_N|.$  It follows from the definition of the multiplication in $S$ that the $u_n$ $(n\geq N)$ have the same first $m-1$ terms, and that there exists $i\in I$ such that the $m$-th term of each $u_n$ belongs to $S_i$ and $v_n\in S_i^1$ ($n\geq N$).  Thus, for each $n\geq N,$ we let $u_n=(a_1\dots,a_{m-1},b_n)$ where $b_n\in S_i$.  Then $b_n=b_{n+1}v_n\in b_{n+1}S_i^1$.  Thus, we have an ascending chain
$$b_N\leq_{S_i}b_{N+1}\leq_{S_i}\cdots$$
in $S_i$.  Since $S_i$ is $\arn$, there exists $N'\geq N$ such that $b_n\,\ar_{S_i}\,b_{N'}$ for all $n\geq N'.$  Therefore, for each $n\geq N$ there exists $s_n\in S_i^1$ such that $b_n=b_{N'}s_n$.  If $s_n=1,$ then $b_n=b_{N'}$ and hence $u_n=u_{N'}$.  Otherwise, if $s_n\in S_i$, we have
$$u_n=(a_1\dots,a_{m-1},b_{N'}s_n)=(a_1\dots,a_{m-1},b_{N'})(s_n)=u_{N'}(s_n).$$
Thus, we have $u_n\,\ar_S\,u_{N'}$ for all $n\geq N'.$  Hence $S$ is $\arn.$
\end{proof}

We now turn our attention to the monoid free product.  First, we make some observations regarding this construction.

Consider a monoid free product $S=\prod\!\!\!\!~^{\scaleto{*}{6pt}}_1\{S_i : i\in I\}.$  We may view the non-identity elements of $\prod\!\!\!\!~^{\scaleto{*}{6pt}}_1\{S_i : i\in I\}$ as sequences $(a_1,\dots,a_n)\in\prod\!\!\!\!~^{\scaleto{*}{6pt}}\{S_i : i\in I\}$ where each $a_i$ belongs to some $S_i\!\setminus\!\{1_i\}$ \cite[p.\ 266]{Howie}.  More precisely, with $\rho$ as given above, in each non-identity $\rho$-class there exists a unique sequence that contains no elements from $\{1_i : i\in I\}$; we call this sequence {\em reduced}.  Thus, we identify the non-identity elements of $S$ with their corresponding reduced sequences.  

Now, consider a reduced squence $u=(a_1,\cdots,a_n)\in S.$  Letting $a_n\in S_i$, observe that if $a_n$ is not right invertible in $S_i$, then for any $v\in S\!\setminus\!\{1\}$ we have $|uv|\in\{|u|+|v|-1,|u|+|v|\}.$  It follows that, if $a_n$ is not right invertible in $S_i$, $u$ is of minimal length in its $\ar$-class (i.e. $|u|=\min\{|w| : u\,\ar\,w\}$).  In fact, the converse also holds.  Indeed, if $a_n$ is right invertible, then there exists $s\in S_i$ such that $a_ns=1_i$.  Then $us=(a_1,\dots,a_{n-1}),$ and of course $(a_1,\dots,a_{n-1})(a_n)=u,$ so $a_n\,\ar\,(a_1,\dots,a_{n-1}).$  Hence, $u$ is not of minimal length in its $\ar$-class.  

\begin{thm}
\label{thm:mfp}
Let $S_i$ ($i\in I$) be a collection of pairwise disjoint monoids.  Then the monoid free product $\prod\!\!\!\!~^{\scaleto{*}{6pt}}_1\{S_i : i\in I\}$ is $\arn$ if and only if each $S_i$ ($i\in I$) is $\arn$.
\end{thm}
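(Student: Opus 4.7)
The plan is to mirror the structure of the proof of Theorem \ref{thm:sfp}, but with extra care given to the phenomenon of cancellation in the monoid free product $S=\prod\!\!\!\!~^{\scaleto{*}{6pt}}_1\{S_i : i\in I\}.$ For the forward implication, I would show that each $S_i,$ regarded as a submonoid of $S$ via $a\mapsto(a)$ (with $1_i$ identified with the identity $1$ of $S$), is $\ar$-preserving in $S$; the conclusion would then follow from Proposition \ref{prop:R-preserving}. Given $a,b\in S_i$ with $a\leq_Sb,$ one writes $a=bs$ for some $s\in S^1$ and analyses the reduced form of $s.$ Since $a$ has reduced length at most $1,$ a short case analysis --- whose only subtle point is the possibility that $s$ begins with an element $s_1\in S_i$ with $bs_1=1_i$ --- forces $s\in S_i^1,$ so $a\leq_{S_i}b.$

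For the converse, a direct adaptation of the semigroup-free-product argument fails, since the length function on reduced words is no longer subadditive: if the last letter of $u$ is right invertible in its factor, one can have $|uv|<|u|.$ The crucial extra input is the observation made immediately before the theorem statement: each $\ar$-class of $S$ contains a reduced representative $\hat u$ of minimum length, and any such $\hat u$ is either equal to $1$ or has its last letter not right invertible in its factor. Given a chain $u_1\leq_Su_2\leq_S\cdots,$ I would replace each $u_n$ by a minimum-length representative $\hat u_n$ of its $\ar$-class, producing an ascending chain $\hat u_1\leq_S\hat u_2\leq_S\cdots$ of the same form. A short verification shows that a non-identity $\hat u_n$ cannot be right invertible in $S$ (any right inverse in $S$ would have to cancel against the last letter of $\hat u_n,$ contradicting its non-invertibility in its factor); consequently, if $\hat u_n=1$ for some $n$ then $\hat u_m=1$ for every $m\geq n$ and the chain stabilises trivially.

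In the remaining case, every $\hat u_n$ is a non-identity reduced word whose last letter is not right invertible in its factor. Writing $\hat u_n=\hat u_{n+1}w_n$ and forming the product in reduced form, no boundary cancellation is possible, and hence $|\hat u_n|\geq|\hat u_{n+1}|.$ The lengths therefore stabilise at some common value $k$ from an index $N$ onwards, and equality of lengths forces each $w_n$ ($n\geq N$) to be either $1$ or a single letter lying in the same factor $S_i$ as the last letter of $\hat u_{n+1}.$ Consequently all $\hat u_n$ with $n\geq N$ share the same initial $k-1$ letters, and their final letters form an ascending chain in this fixed $S_i.$ Applying the hypothesis that $S_i$ is $\arn$ to that chain produces an index $N'\geq N$ beyond which the final letters are all $\ar_{S_i}$-related, which lifts back to $\ar_S$-equivalence of the $\hat u_n,$ and hence of the $u_n.$ I expect the main obstacle to be dealing cleanly with right-invertible letters; the device of passing to minimum-length $\ar$-representatives is precisely what neutralises this issue, after which the argument closely tracks the semigroup case.
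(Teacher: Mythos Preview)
Your proposal is correct and takes essentially the same approach as the paper: the paper's proof simply says to repeat the argument of Theorem~\ref{thm:sfp}, with the sole amendment that in the converse direction each $u_i$ is chosen to be of minimal length in its $\ar$-class, which is precisely your device of passing to the representatives $\hat u_n$. Your write-up supplies considerably more detail (the handling of $\hat u_n=1$, the explicit check that cancellation cannot occur when the last letter is not right invertible, and the subtle case $bs_1=1_i$ in the forward direction), but the underlying strategy is identical.
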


\begin{proof}
The proof is essentially the same as that of Theorem \ref{thm:sfp}.  The only difference is that, in ($\Leftarrow$), we stipulate that each $u_i$ is of minimal length in its $\ar$-class, and it then follows that $|u_1|\geq|u_2|\geq\cdots.$
\end{proof}

\section{\large{Rees Matrix Semigroups}\nopunct}
\label{sec:Rees}

Let $S$ be a semigroup, let $I$ and $J$ be non-empty index sets, and let $P=(p_{ji})$ be a $J\times I$ matrix with entries from $S.$  The set $I\times S\times J$ becomes a semigroup under the multiplication given by 
$$(i, s, j)(k, t,l)=(i, sp_{jk}t, l),$$
and is called the {\em Rees matrix semigroup over $S$ with respect to $P$}.  We denote this semigroup by $\M(S; I, J; P).$

We note that Rees matrix semigroups over groups are precisely the completely simple semigroups (i.e.\ semigroups with no proper ideals that possess minimal left and right ideals) \cite[Theorem 3.3.1]{Howie}.

We now state the main result of this section, providing necessary and sufficient conditions for a Rees matrix semigroup to be $\arn.$

\begin{thm}
\label{thm:rm}
Let $T=\M(S; I, J; P)$ be a Rees matrix semigroup.  Let $U$ denote the right ideal $\{p_{j,i} : j\in J, i\in I\}S$ of $S.$  Then $T$ is $\arn$ if and only if every ascending chain
$$a_1U^1\subseteq a_2U^1\subseteq\cdots,$$
where $a_i\in S,$ eventually stabilises.
\end{thm}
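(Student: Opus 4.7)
The plan is to unpack the preorder on $T = \M(S; I, J; P)$ in terms of the ``row'' right ideals $V_l := \{p_{lm} : m \in I\}S$ of $S$, noting that $U = \bigcup_{l \in J} V_l$. A direct check with the Rees product shows $(i,a,j) \in (k,b,l)T$ iff $i=k$ and $a \in bV_l$ (with $j$ arbitrary), while $(i,a,j) = (k,b,l)$ forces all three coordinates to match. Thus any ascending chain $x_n = (i_n, a_n, j_n)$ in $T$ with $x_n \in x_{n+1}T^1$ has constant first coordinate and satisfies $a_n \in a_{n+1}V_{j_{n+1}}^1 \subseteq a_{n+1}U^1$, so it projects to an ascending $U^1$-chain in $S$.

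For the ``only if'' direction, I would conversely lift a given $U^1$-chain $a_1U^1 \subseteq a_2U^1 \subseteq \cdots$ to a $T$-chain: fix $i \in I$ and $j_1 \in J$ arbitrarily, then inductively set $j_{n+1} = j_n$ if $a_n = a_{n+1}$, else $j_{n+1} = l$ where $a_n = a_{n+1}p_{l,k}c$ for some chosen $l \in J, k \in I, c \in S$, ensuring $x_n := (i, a_n, j_n) \in x_{n+1}T^1$ by construction. Since $T$ is $\arn$, this chain stabilises at some $N$, giving $x_n \,\ar_T\, x_N$ for $n \geq N$. In particular $x_n \in x_NT^1$, which unpacks to $a_n \in \{a_N\} \cup a_NV_{j_N} \subseteq a_NU^1$, so $a_nU^1 \subseteq a_NU^1$; combined with the reverse inclusion from the ascending $S$-chain, this yields $a_nU^1 = a_NU^1$.

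For the ``if'' direction, project a given $T$-chain to its associated $U^1$-chain in $S$, which stabilises at some $N$ by hypothesis. The main obstacle is that the $j_n$'s are not controlled by the $S$-chain, so $\ar_T$-stabilisation need not occur at $N$ itself. The key observation to resolve this is that, for $n \geq N$, any non-trivial transition $x_n \to x_{n+1}$ (with $x_n \neq x_{n+1}$) forces $a_{n+1}$ to admit a local right identity in $V_{j_{n+1}}$: from $a_n = a_{n+1}v$ with $v \in V_{j_{n+1}}$ and $a_{n+1} = a_nu$ for some $u \in U^1$ (using $a_nU^1 = a_{n+1}U^1$), we obtain $a_{n+1} = a_{n+1}(vu)$ with $vu \in V_{j_{n+1}}$, since $V_{j_{n+1}}$ is a right ideal of $S$. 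This property propagates: trivial transitions preserve it, while non-trivial ones produce it anew.

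Taking $N' \geq N$ to be the first index at which $a_{N'} \in a_{N'}V_{j_{N'}}$ (or noting the chain becomes constant after $N$ if no such index exists, in which case stabilisation is immediate), we get $a_m \in a_mV_{j_m}$ for all $m \geq N'$. Then for $N' \leq n \leq m$, the relation $a_m \in a_nU^1$ (from the stabilised $U^1$-chain) combined with a local right identity $a_n = a_nv$ with $v \in V_{j_n}$ allows rewriting $a_m$ as an element of $a_nV_{j_n}$: if $a_m = a_n$ then $a_m = a_nv \in a_nV_{j_n}$, and if $a_m = a_nu$ with $u \in U$ then $a_m = a_n(vu) \in a_nV_{j_n}$. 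Hence $x_m \in x_nT$, and together with $x_n \leq_T x_m$ from the chain we conclude $x_n \,\ar_T\, x_m$, so the $T$-chain stabilises in $\ar_T$.
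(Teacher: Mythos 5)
Your proof is correct and follows essentially the same route as the paper's: both directions translate between ascending chains in $T$ and ascending $U^1$-chains in $S$ by unpacking the Rees multiplication, and both hinge on the same key substitution (composing a factorisation $a_n = a_{n+1}v$ with $v$ in the row ideal $V_{j_{n+1}}$ against a relation $a_m = a_nu$, $u \in U^1$, to land back in $a_{n+1}V_{j_{n+1}}$). The only divergence is bookkeeping: the paper argues by contraposition with \emph{strictly} ascending chains, where this substitution immediately rules out $a_nU^1 = a_{n+2}U^1$ (so the projected chain is strict after skipping alternate indices), whereas your direct treatment of arbitrary ascending chains requires the local-right-identity propagation step to locate the index where $\ar_T$-stabilisation begins --- a valid, slightly longer variant of the same argument.
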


\begin{proof}
We prove the contrapositive for both directions.

($\Rightarrow$) Suppose that there exists an infinite strictly ascending chain
$$a_1U^1\subsetneq a_2U^1\subsetneq\cdots,$$
where $a_i\in S.$  Fix $i\in I$ and $j_1\in J.$  For each $n\in\N$ there exists $i_{n+1}\in I,$ $j_{n+1}\in J$ and $s_n\in S$ such that $a_n=a_{n+1}p_{j_{n+1},i_{n+1}}s_n.$
Then $$(i, a_n, j_n)=(i, a_{n+1}, j_{n+1})(i_{n+1}, s_n, j_n),$$
so $(i, a_n, j_n)\leq_T(i, a_{n+1}, j_{n+1}).$
Suppose for a contradiction that $(i, a_n, j_n)\,\ar_T\,(i, a_{n+1}, j_{n+1}).$  We cannot have $(i, a_n, j_n)=(i, a_{n+1}, j_{n+1}),$ for then $a_n=a_{n+1}$.  Therefore, there exist some $k\in I$ and $s\in S$ such that $(i, a_{n+1}, j_{n+1})=(i, a_n, j_n)(k, s, j_{n+1}).$  But then $a_{n+1}=a_n(p_{j_n,k}s)\in a_nU,$ so that $a_nU^1=a_{n+1}U^1$, a contradiction. Thus, we have an infinite strictly ascending chain
$$(i, a_1, j_1)<_T(i, a_2, j_2)<_T\cdots$$
in $T,$ and hence $T$ is not $\arn$.

($\Leftarrow$)  Suppose that $T$ is not $\arn$.  Then there exists an infinite strictly ascending chain
$$(i_1, a_1, j_1)<_T(i_2, a_2, j_2)<_T\cdots$$
in $T.$ Letting $i=i_1,$ we have $$i=i_1=i_2=\cdots.$$  For each $n\in\N$ there exist $k_n\in I$ and $s_n\in S$ such that 
$$(i, a_n, j_n)=(i, a_{n+1}, j_{n+1})(k_n, s_n, j_n).$$  Thus $a_n=a_{n+1}p_{j_{n+1},k_n}s_n\in a_{n+1}U,$ so $a_nU^1\subseteq a_{n+1}U^1.$  We cannot have $a_nU^1=a_{n+2}U^1.$  Indeed, if we did, then there would exist $u\in U^1$ such that $a_{n+2}=a_nu,$ and hence $a_{n+2}=a_{n+1}(p_{j_{n+1},k_n}s_nu).$  But then
$$(i, a_{n+2}, j_{n+2})=(i, a_{n+1}, j_{n+1})(k_n, s_nu, j_{n+2})\in(i, a_{n+1}, j_{n+1})T,$$ contradicting the fact that $(i, a_{n+1}, j_{n+1})<_T(i, a_{n+2}, j_{n+2}).$  Thus, we have an infinite strictly ascending chain
$$a_1U^1\subsetneq a_3U^1\subsetneq a_5U^1\subsetneq\cdots,$$
as desired.
\end{proof}

\begin{cor}
\label{cor:rm}
Let $T=\M(S; I, J; P)$ be a Rees matrix semigroup.  If $S$ is $\arn$ then so is $T.$
\end{cor}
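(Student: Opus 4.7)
The plan is to apply Theorem~\ref{thm:rm}: assuming $S$ is $\arn$, it suffices to show that every ascending chain
\[
a_1U^1 \subseteq a_2U^1 \subseteq \cdots \quad (a_i \in S)
\]
eventually stabilises. Since $U^1 \subseteq S^1$, from $a_n \in a_{n+1}U^1 \subseteq a_{n+1}S^1$ we extract the ascending chain $a_1S^1 \subseteq a_2S^1 \subseteq \cdots$ of principal right ideals of $S$. The $\arn$ hypothesis provides $N \in \N$ with $a_n\,\ar_S\,a_N$ for all $n \geq N$, so past index $N$ the $S$-chain is constant on a single $\ar_S$-class.

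The plan is then to transfer this stabilisation to the $U$-chain. Suppose toward contradiction that $a_nU^1 \subsetneq a_{n+1}U^1$ for some $n \geq N$. Then $a_n \ne a_{n+1}$, so $a_n = a_{n+1}u_n$ for some $u_n \in U$ and, by $\ar_S$-equivalence, $a_{n+1} = a_ns_n$ for some $s_n \in S$. Composing yields $a_{n+1} = a_{n+1}(u_ns_n)$ with $u_ns_n \in U$ (since $U$ is a right ideal of $S$), so $a_{n+1}$ carries a local right identity in $U$; iterating along the chain also gives $a_N = a_n(u_{n-1}u_{n-2}\cdots u_N) \in a_nU$ for $n > N$. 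The aim is to combine these relations with the specific form $U = \{p_{j,i} : j \in J,\,i \in I\}S$ to force $a_{n+1} \in a_nU$, contradicting strictness and hence forcing the $U$-chain to stabilise.

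The main obstacle is this last forcing: the $\ar_S$-equivalence places $a_{n+1}$ only in $a_nS$, not automatically in $a_nU$, and $U$ is generally not a two-sided ideal. The argument must absorb the non-$U$ part of $s_n$ into the local right identity structure inside $U$ — using $a_{n+1} = a_{n+1}(u_ns_n)^k$ for all $k$ together with the right-ideal property $u_ns_n\in U$ — to produce an expression $a_{n+1} = a_n\cdot u'$ with $u' \in U$. Once strict steps are ruled out beyond $N$, the chain $a_nU^1$ becomes constant from some index onwards, and Theorem~\ref{thm:rm} yields that $T$ is $\arn$.
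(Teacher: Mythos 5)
Your reduction to Theorem~\ref{thm:rm} and the extraction of the stabilising chain $a_1S^1\subseteq a_2S^1\subseteq\cdots$ match the paper, but the step you yourself flag as ``the main obstacle'' is a genuine gap, and the statement you are trying to force is in fact false: beyond the index $N$ where all the $a_n$ become $\ar_S$-equivalent, a \emph{single} strict inclusion $a_nU^1\subsetneq a_{n+1}U^1$ can still occur, so no manipulation of the relations $a_{n+1}=a_{n+1}(u_ns_n)^k$ can rule it out. Concretely, let $S$ be the full transformation monoid on $\{1,2,3\}$ (composing left to right), let $u=a$ be the map $1,2\mapsto 1$, $3\mapsto 2$, let $a'$ be the map $1,2\mapsto 2$, $3\mapsto 3$, and take the $1\times 1$ sandwich matrix $P=(u)$, so that $U=uS=\{h : h(1)=h(2)\}$. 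Then $a\,\ar_S\,a'$ (same kernel), yet $aU^1$ consists of $a$ together with the constant maps, while $a'U^1=U$; so $aU^1\subsetneq a'U^1$ strictly, with $a=a'u\in a'U$ and $a'\notin aU^1$. The algebraic reason your iteration cannot succeed is visible in the expansion: $a_{n+1}=a_{n+1}(u_ns_n)^k=a_n\bigl(s_n(u_ns_n)^{k-1}\bigr)$, and the factor $s_n$ multiplies an element of $U$ on the \emph{left}; since $U$ is only a right ideal, membership in $U$ is irrecoverably lost, exactly as you suspected.

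The paper closes the gap by contradicting \emph{two} consecutive strict steps rather than one, skipping an index so that the right-ideal property acts on the correct side. From strictness of the first step, $a_N\in a_{N+1}U$; from $a_NS^1=a_{N+2}S^1$ and $a_N\neq a_{N+2}$ (forced by the two strict inclusions), $a_{N+2}\in a_NS$. Hence
$$a_{N+2}\in a_NS\subseteq a_{N+1}US\subseteq a_{N+1}U,$$
which gives $a_{N+2}U^1\subseteq a_{N+1}U^1$ and contradicts $a_{N+1}U^1\subsetneq a_{N+2}U^1$. In other words, one strict step inside a single $\ar_S$-class is possible (as the example above shows) but two in a row are not, and this weaker fact already suffices for Theorem~\ref{thm:rm}. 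Your proof can be repaired by replacing the single-step forcing with this index-skipping argument; everything before that point stands.
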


\begin{proof}
Let $U$ be as given in the statement of Theorem \ref{thm:rm}.  Suppose for a contradiction that there exists an infinite strictly ascending chain 
$$a_1U^1\subsetneq a_2U^1\subsetneq\cdots$$
where $a_i\in S.$  Then clearly we have an ascending chain 
$$a_1\leq_Sa_2\leq_S\cdots.$$  
Since $S$ is $\arn$, there exists $N\in\N$ such that $a_nS^1=a_NS^1$ for all $n\geq N.$  But then
$$a_{N+2}\in a_NS\subseteq a_{N+1}US\subseteq a_{N+1}U,$$
contradicting the fact that $a_{N+1}U^1\subsetneq a_{N+2}U^1$.
Hence, by Theorem \ref{thm:rm}, $T$ is $\arn$.
\end{proof}

The converse of Corollary \ref{cor:rm} does not hold, as demonstrated by the following example.

\begin{ex}
Let $S$ be a semigroup with 0 that is not $\arn$.  Let $P$ be the $1\times 1$ matrix whose entry is 0, and let $T=\M(S; \{1\}, \{1\}; P).$  For any $s, s'\in S$ we have $(1, s, 1)(1, s', 1)=(1, s0s', 1)=(1, 0, 1),$ and clearly $(1, 0, 1)$ is a zero element in $T,$ so $T$ is a null semigroup.  Hence, by Corollary \ref{cor:min_right_ideals}, $T$ is $\arn$.  
\end{ex}

\begin{cor}
\label{cor:rm1}
Let $T=\M(S; I, J; P)$ be a Rees matrix semigroup such that every element of $S$ has a local right identity in $U=\{p_{j,i} : j\in J, i\in I\}S.$  Then $T$ is $\arn$ if and only if $S$ is $\arn$.
\end{cor}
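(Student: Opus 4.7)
The plan is to reduce the corollary to Theorem \ref{thm:rm} by showing that, under the hypothesis, ascending chains of the form $a_1U^1\subseteq a_2U^1\subseteq\cdots$ considered in that theorem coincide with ascending chains of principal right ideals of $S$.

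First I would record that $U$ is a right ideal of $S$ (so in particular $uU^1\subseteq U^1$ for every $u\in U^1$) and then establish the key identity $aS^1=aU=aU^1$ for every $a\in S$. The inclusion $aU^1\subseteq aS^1$ is immediate. For the reverse, by hypothesis there exists $u\in U$ with $a=au$; then $a=au\in aU$, and for any $s\in S$ we have $as=aus\in aU$ because $us\in U$. Hence $aS^1\subseteq aU\subseteq aU^1\subseteq aS^1$, so all three coincide.

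Second, I would deduce from this identity that, for $a_1,a_2\in S$,
\[
a_1\leq_S a_2\;\Longleftrightarrow\;a_1U^1\subseteq a_2U^1,\qquad a_1\,\ar_S\,a_2\;\Longleftrightarrow\;a_1U^1=a_2U^1.
\]
Indeed, $a_1\leq_S a_2$ means $a_1\in a_2S^1=a_2U^1$, so $a_1=a_2u$ for some $u\in U^1$, giving $a_1U^1=a_2uU^1\subseteq a_2U^1$; conversely, $a_1U^1\subseteq a_2U^1$ forces $a_1\in a_2U^1\subseteq a_2S^1$.

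With this translation in hand, the ($\Leftarrow$) direction of the corollary is exactly Corollary \ref{cor:rm}. For ($\Rightarrow$), suppose $T$ is $\arn$ and take any ascending chain $a_1\leq_S a_2\leq_S\cdots$ in $S$; by Step~2 this corresponds to an ascending chain $a_1U^1\subseteq a_2U^1\subseteq\cdots$, which by Theorem \ref{thm:rm} must eventually stabilise. Translating back gives $a_n\,\ar_S\,a_N$ for all sufficiently large $n$, so $S$ is $\arn$. There is no substantial obstacle here; the only point requiring care is the verification that the local-right-identity hypothesis upgrades $aS^1\supseteq aU$ to equality, which is the step that makes the two ascending chain conditions match.
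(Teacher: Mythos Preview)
Your proposal is correct and follows essentially the same route as the paper: both arguments establish the key identity $aS^1=aU^1$ for every $a\in S$ (you additionally note $aS^1=aU$, which is harmless) and then reduce the corollary directly to Theorem~\ref{thm:rm}. The only cosmetic difference is that you invoke Corollary~\ref{cor:rm} separately for the ($\Leftarrow$) direction and spell out the translation $a_1\leq_S a_2\Leftrightarrow a_1U^1\subseteq a_2U^1$ explicitly, whereas the paper simply observes that once $aS^1=aU^1$ holds, both directions are immediate from Theorem~\ref{thm:rm}.
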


\begin{proof}
Consider any $a\in S.$  By assumption, we have $a\in aU,$ so $aS^1\subseteq aUS^1\subseteq aU^1.$
Clearly $aU^1\subseteq aS^1,$ so $aS^1=aU^1$.
The result now follows readily from Theorem \ref{thm:rm}.
\end{proof}

\begin{cor}
\label{cor:rm2}
Let $T=\M(S; I, J; P)$ be a Rees matrix semigroup where $S$ is a monoid, and suppose that there exist $i\in I$ and $j\in J$ such that $p_{j,i}$ is right invertible.  Then $T$ is $\arn$ if and only if $S$ is $\arn$.
\end{cor}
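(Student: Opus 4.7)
The plan is to reduce Corollary \ref{cor:rm2} directly to Corollary \ref{cor:rm1}. The key observation is that the hypothesis ``$p_{j,i}$ is right invertible in the monoid $S$'' is exactly what is needed to place the identity $1$ of $S$ inside the right ideal $U=\{p_{j,i} : j\in J, i\in I\}S$.

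More precisely, if $p_{j,i}$ is right invertible, pick $s\in S$ with $p_{j,i}s=1$. Then $1=p_{j,i}s\in U$, so for every $a\in S$ we have $a=a\cdot 1\in aU$. Hence $1\in U$ serves as a local right identity (in $U$) for every element of $S$, and the hypothesis of Corollary \ref{cor:rm1} is satisfied. That corollary then gives at once that $T$ is $\arn$ if and only if $S$ is $\arn$.

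There is essentially no obstacle here; the whole content of the proof is the single-line remark that right invertibility of $p_{j,i}$ forces $1\in U$. The forward direction could alternatively be proved via Corollary \ref{cor:rm} without any use of the right-invertibility hypothesis, but routing the argument entirely through Corollary \ref{cor:rm1} keeps the proof uniform.
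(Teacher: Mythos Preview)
Your proof is correct and essentially identical to the paper's own argument: both observe that right invertibility of $p_{j,i}$ gives $1_S\in p_{j,i}S\subseteq U$, so $1_S$ is a local right identity in $U$ for every element of $S$, and then invoke Corollary~\ref{cor:rm1}.
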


\begin{proof}
We have $1_S\in p_{j,i}S,$ and $1_S$ is obviously a local right identity of every element  of $S.$  Hence, by Corollary \ref{cor:rm1}, $S$ is $\arn$.
\end{proof}

We now consider a variant of the Rees matrix construction.  Let $S$ be a semigroup with zero 0, let $I$ and $J$ be non-empty index sets, and let $P=(p_{ji})$ be a $J\times I$ matrix with entries from $S.$  Let $T'=\M(S; I, J; P),$ and let $T$ denote the Rees quotient $T'/Q,$ where $Q$ is the ideal $I\times\{0\}\times J$ of $T'$.  The semigroup $T$ is called the {\em Rees matrix semigroup with zero over $S$ with respect to $P$}, and is denoted by $\M^0(S; I, J; P).$

Rees matrix semigroups with zero over groups are precisely the completely 0-simple semigroups \cite[Theorem 3.2.3]{Howie}.

\begin{cor}\label{cor:rm0}
Let $T=\M^0(S; I, J; P)$ be a Rees matrix semigroup with zero.  Let $U$ denote the right ideal $\{p_{j,i} : j\in J, i\in I\}S$ of $S.$  Then the following are equivalent:
\begin{enumerate}
\item $T$ is $\arn$;
\item $\M(S; I, J; P)$ is $\arn$;
\item every ascending chain
$$a_1U^1\subseteq a_2U^1\subseteq\cdots,$$
where $a_i\in S,$ eventually stabilises.
\end{enumerate}
\end{cor}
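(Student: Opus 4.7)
The plan is to reduce everything to tools already developed in Section~\ref{sec:prelim} plus Theorem~\ref{thm:rm}. First, note that the equivalence (2)$\Leftrightarrow$(3) is an immediate restatement of Theorem~\ref{thm:rm} applied to $\M(S; I, J; P)$. So the only real content is the equivalence (1)$\Leftrightarrow$(2), which concerns the relationship between $T = \M^0(S; I, J; P)$ and $T' = \M(S; I, J; P)$. By definition, $T = T'/Q$, where $Q = I \times \{0\} \times J$ is an ideal of $T'$.

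To establish (1)$\Leftrightarrow$(2), I would apply Proposition~\ref{prop:ideal,lri} to the pair $(T', Q)$. The proposition states that $T'$ is $\arn$ if and only if both $Q$ and $T'/Q$ are $\arn$, provided every element of $Q$ has a local right identity in $Q$. Since $T'/Q = T$, this immediately yields the desired equivalence once we verify (a) the local right identity condition on $Q$ and (b) that $Q$ itself is $\arn$.

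For (a), given $(i, 0, j) \in Q$, a direct computation in $T'$ shows $(i, 0, j)(k, 0, j) = (i, 0 \cdot p_{jk} \cdot 0, j) = (i, 0, j)$ for any choice of $k \in I$, so $(k, 0, j)$ serves as a local right identity. For (b), I would show that $Q$ is a union of minimal right ideals and then invoke Corollary~\ref{cor:min_right_ideals}. Specifically, for each $i \in I$ the set $R_i = \{i\} \times \{0\} \times J$ is closed under right multiplication by elements of $Q$ (the first coordinate is preserved), and for any $(i, 0, j) \in R_i$ the computation just noted shows $(i, 0, j) Q = R_i$, so $R_i$ is a minimal right ideal of $Q$. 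As $Q = \bigsqcup_{i \in I} R_i$, Corollary~\ref{cor:min_right_ideals} yields that $Q$ is $\arn$.

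I do not anticipate any significant obstacle: the proof is essentially a bookkeeping exercise, the key conceptual step being the recognition that the ``zero layer'' $Q$ of a Rees matrix semigroup is always a union of minimal right ideals in which every element has a local right identity, so that Proposition~\ref{prop:ideal,lri} directly equates $\arn$-ness of $T'$ with that of its Rees quotient $T$.
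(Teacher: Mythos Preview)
Your proposal is correct and follows essentially the same approach as the paper: both invoke Theorem~\ref{thm:rm} for (2)$\Leftrightarrow$(3) and Proposition~\ref{prop:ideal,lri} applied to the ideal $Q=I\times\{0\}\times J$ of $T'=\M(S;I,J;P)$ for (1)$\Leftrightarrow$(2). The only difference is that the paper simply asserts that $Q$ is $\arn$ and that every element of $Q$ has a local right identity in $Q$, whereas you supply explicit verifications of these facts (via Corollary~\ref{cor:min_right_ideals} and a direct computation, respectively).
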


\begin{proof}
(1)$\Leftrightarrow$(2).  Let $T'=\M(S; I, J; P)$ and $Q=I\times\{0\}\times J,$ so that $T=T'/Q.$  Since $Q$ is $\arn$ and every element of $Q$ has a local right identity in $Q,$ it follows from Proposition \ref{prop:ideal,lri} that $T$ is $\arn$ if and only if $T'$ is $\arn$.

(2) and (3) are equivalent by Theorem \ref{thm:rm}.
\end{proof}

Related to the Rees matrix with zero construction is that of the Brandt extension, defined as follows.  Let $S$ be a semigroup and let $I$ be a non-empty set.  The {\em Brandt extension} of $S$ by $I,$ denote by $\mathcal{B}(S, I),$ is the semigroup with universe $(I\times S\times I)\cup\{0\}$ and multiplication given by $$(i, s, j)(k, t,l)=
\begin{cases}
(i, st, l)&\text{if }j=k\\
0&\text{ otherwise,}
\end{cases}$$ 
and $0x=x0=0$ for all $x\in(I\times S\times I)\cup\{0\}.$
Notice that if $S$ is a monoid, then $\mathcal{B}(S, I)$ is isomorphic to $\M^0(S; I, I; P)$ where $P$ is the $I\times I$ identity matrix.  Brandt extensions of groups are precisely the completely $0$-simple inverse semigroups \cite[Theorem 5.1.8]{Howie}.

\begin{thm}\label{thm:Brandt}
Let $S$ be a semigroup and let $I$ be a non-empty set.  Then $\mathcal{B}(S,I)$ is $\arn$ if and only if $S$ is $\arn$.
\end{thm}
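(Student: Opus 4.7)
The plan is to prove both directions separately.

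For the forward direction, I would fix any $i\in I$ and consider the subsemigroup $T=\{(i,s,i):s\in S\}\cong S$ of $\mathcal{B}(S,I).$ A direct computation shows $T$ is $\ar$-preserving in $\mathcal{B}(S,I)$: for $a=(i,s,i)$ and $b=(i,t,i)$ in $T,$ any $y\in\mathcal{B}(S,I)$ with $by=a\neq 0$ must take the form $y=(i,u,i)$ with $u\in S$ (its first coordinate must equal $i$ to avoid producing a zero product, and its third must equal $i$ to match $a$), so $a\leq_{\mathcal{B}(S,I)} b$ reduces to $s\in tS^1,$ which is precisely $a\leq_T b.$ Proposition \ref{prop:R-preserving} then yields that $S\cong T$ is $\arn.$

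For the backward direction, assume $S$ is $\arn$ and take an ascending chain $x_1\leq x_2\leq\cdots$ in $\mathcal{B}(S,I).$ Since $x\leq 0$ forces $x=0,$ the chain consists of an initial (possibly empty) block of zeros followed by nonzero entries, so I may truncate and assume $x_n=(i_n,s_n,j_n)\neq 0$ for all $n.$ Unpacking the relation $x_n\leq x_{n+1}$ (either equality, or right-multiplication by some $(k,u,l)$ with $k=j_{n+1}$ to avoid a zero product) shows that all first coordinates are equal to some fixed $i\in I$ and that $s_n\in s_{n+1}S^1.$ Hence $s_1\leq_S s_2\leq_S\cdots,$ and the $\arn$ hypothesis yields $N$ with $s_n\,\ar_S\,s_N$ for all $n\geq N.$

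The concluding step, which houses the main obstacle, is lifting this stabilisation back to $\mathcal{B}(S,I).$ Transitivity gives $x_N\leq x_n,$ so only $x_n\leq x_N$ remains. If $s_n=s_Nv$ for some $v\in S,$ then $x_n=x_N\cdot(j_N,v,j_n)$ settles the matter. The delicate case is $s_n=s_N$ with $j_n\neq j_N$: here the relation $x_N\leq x_n$ (which does hold) cannot come from equality (as $j_n\neq j_N$) and must therefore come from $s_N=s_nu$ with $u\in S,$ whence substituting $s_n=s_N$ produces a local right identity $u$ for $s_N.$ This same $u$ then yields $x_n=(i,s_Nu,j_n)=x_N\cdot(j_N,u,j_n),$ as required. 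The key conceptual point is that the chain relation itself furnishes the local right identity precisely in the one case where the argument needs one.
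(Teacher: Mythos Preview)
Your proof is correct. The forward direction is essentially the paper's: the paper observes that $\{i\}\times S\times\{i\}$ is right unitary in $\mathcal{B}(S,I)$ and invokes Corollary~\ref{cor:ru}, while you verify the slightly weaker $\ar$-preserving property directly and invoke Proposition~\ref{prop:R-preserving}; these amount to the same argument.

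The backward direction, however, is genuinely different. The paper proceeds indirectly: it passes to $\mathcal{B}(S^1,I)\cong\M^0(S^1;I,I;P)$ with $P$ the identity matrix, applies the Rees matrix machinery (Corollary~\ref{cor:rm} and Corollary~\ref{cor:rm0}) to conclude that $\mathcal{B}(S^1,I)$ is $\arn$, and then uses that $\mathcal{B}(S,I)$ sits inside as an ideal (Proposition~\ref{prop:ideal}). Your argument is instead a direct, self-contained chain analysis in $\mathcal{B}(S,I)$, with the nice observation that in the one problematic case ($s_n=s_N$ but $j_n\neq j_N$) the chain relation $x_N\leq_{\mathcal{B}(S,I)} x_n$ itself supplies the local right identity needed to manufacture the multiplier. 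Your route is more elementary and avoids any dependence on the Rees matrix results of Section~\ref{sec:Rees}; the paper's route is shorter on the page because it cashes in work already done.
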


\begin{proof}
($\Rightarrow$) It is straightforward to show that, for any $i\in I,$ $S$ is isomorphic the subsemigroup $S_i=\{i\}\times S\times\{i\}$ of $\mathcal{B}(S,I),$ and that $S_i$ is right unitary in $\mathcal{B}(S,I).$  Hence, $S$ is $\arn$ by Corollary \ref{cor:ru}.

($\Leftarrow$) Letting $T=\mathcal{B}(S^1,I),$ we have $T\cong\M^0(S^1; I, I; P)$ where $P$ is the $I\times I$ identity matrix.  Since $S$ is $\arn$, we have that $\M(S^1; I, I; P)$ is $\arn$ by Corollary \ref{cor:rm}.  Hence, by Corollary \ref{cor:rm0}, $T$ is $\arn$.  Since $\mathcal{B}(S,I)$ is an ideal of $T,$ it is also $\arn$ by Proposition \ref{prop:ideal}.
\end{proof}

\section{\large{Bruck-Reilly Extensions}\nopunct}
\label{sec:BR}

Let $M$ be a monoid with identity $1_M$, and let $\theta : M\to M$ be an endomorphism.  We define a binary operation on the set $\N_0\times M\times\N_0$ by
$$(i, a, j)(p, b, q)=(i-j+t, (a\theta^{t-j})(b\theta^{t-p}), q-p+t),$$
where $t=\max(j, p)$ and $\theta^0$ denotes the identity map on $M.$
With this operation the set $\N_0\times M\times\N_0$ is a monoid with identity $(0, 1_M, 0).$  It is denoted by $BR(M, \theta)$ and called the {\em Bruck-Reilly extension of $M$ determined by $\theta$}.

Special instances of this construction were introduced by Bruck \cite{Bruck} and Reilly \cite{Reilly}, after whom it is named, and it was given in its general form by Munn in \cite{Munn}. 

\begin{thm}
\label{thm:BR}
Let $M$ be a monoid and let $\theta : M\to M$ be a monoid homomorphism.  Then $BR(M,\theta)$ is $\arn$ if and only if $M$ is $\arn$.
\end{thm}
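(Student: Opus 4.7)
The plan is to handle the two directions separately, using different tools from the excerpt.

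For the forward direction, I would identify $M$ with the subsemigroup $M' = \{0\}\times M\times\{0\}$ of $BR(M,\theta)$ via the map $a\mapsto(0,a,0)$; a direct calculation shows this is an isomorphism onto its image. I would then verify that $M'$ is right unitary in $BR(M,\theta)$. Indeed, for any $(0,a,0)\in M'$ and $(p,b,q)\in BR(M,\theta)$, since $\max(0,p)=p$, the product $(0,a,0)(p,b,q)$ equals $(p,(a\theta^p)b,q)$; for this to lie in $M'$ we need $p=q=0$, i.e.\ $(p,b,q)\in M'$. Corollary \ref{cor:ru} then gives $M$ is $\mathcal{R}$-noetherian.

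For the backward direction, I take an ascending chain $(i_1,a_1,j_1)\leq_{BR} (i_2,a_2,j_2)\leq_{BR}\cdots$ and unpack what each link says using the multiplication formula. Writing $(i_n,a_n,j_n)=(i_{n+1},a_{n+1},j_{n+1})(p_n,b_n,q_n)$ with $t_n=\max(j_{n+1},p_n)$, the first coordinate gives $i_n=i_{n+1}-j_{n+1}+t_n\geq i_{n+1}$. So $i_1\geq i_2\geq\cdots$ is a descending chain in $\mathbb{N}_0$ and stabilises: there exists $N$ with $i_n=i_N$ for all $n\geq N$. For such $n$, equality forces $t_n=j_{n+1}$, i.e.\ $p_n\leq j_{n+1}$; then the second coordinate becomes $a_n=a_{n+1}\cdot(b_n\theta^{j_{n+1}-p_n})$, which lies in $a_{n+1}M$. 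Hence $a_N\leq_M a_{N+1}\leq_M\cdots$ in $M$, and since $M$ is $\mathcal{R}$-noetherian there is some $N'\geq N$ with $a_n\,\mathcal{R}_M\,a_{N'}$ for all $n\geq N'$.

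The key step that completes the argument is an auxiliary observation about $BR(M,\theta)$: if $i=i'$ and $a\in a'M^1$, then $(i,a,j)\leq_{BR}(i',a',j')$ for \emph{any} $j,j'\in\mathbb{N}_0$. Indeed, writing $a=a'm$ with $m\in M^1$, the triple $(j',m,j)$ multiplies $(i,a',j')$ on the right to yield $(i,a'm,j)=(i,a,j)$ (using that $\theta^0$ is the identity, so no $\theta$ is actually applied to $m$). Applied twice, this shows that $a_n\,\mathcal{R}_M\,a_{N'}$ together with $i_n=i_{N'}$ implies $(i_n,a_n,j_n)\,\mathcal{R}_{BR}\,(i_{N'},a_{N'},j_{N'})$, regardless of the $j$-coordinates. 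The chain thus stabilises in $\mathcal{R}_{BR}$.

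The main obstacle is the third coordinate, which, unlike the first, has no monotonicity forced on it by the chain and can jump up and down freely as $q_n-p_n$ varies. The crucial trick that sidesteps this is the observation in the previous paragraph: the presence of $1_M$ in $M$ lets us absorb any desired change of third coordinate into the right multiplier without altering $i$ or $a$, so the $j$-coordinate simply does not obstruct $\mathcal{R}_{BR}$-equivalence once the first two coordinates are under control.
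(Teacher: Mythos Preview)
Your proof is correct and follows essentially the same approach as the paper's: both directions match, with the forward direction using that $\{0\}\times M\times\{0\}$ is right unitary, and the backward direction using the stabilisation of the first coordinate to reduce to a chain in $M$, then closing via the computation $(i,a',j')(j',m,j)=(i,a'm,j)$. Your packaging of this last computation as a standalone ``auxiliary observation'' and applying it twice is a slightly cleaner presentation than the paper's (which only needs one direction since $u_{N'}\leq_N u_n$ is already given by the chain), but the content is identical.
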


\begin{proof}
($\Rightarrow$) Let $N=BR(M, \theta).$  It is straightforward to show that $M$ is isomorphic to the submonoid $\{0\}\times M\times\{0\}$ of $N,$ and that this submonoid is right unitary in $N.$  Hence, $M$ is $\arn$ by Corollary \ref{cor:ru}.

($\Leftarrow$) Consider an ascending chain 
$$u_1\leq_Nu_2\leq_N\cdots$$
in $N,$ where $u_k=(i_k, a_k, j_k).$  Then for each $k\in\N$ there exists $(p_k, m_k, q_k)$ such that $u_k=u_{k+1}(p_k, m_k, q_k).$  Letting $t_k=\max(j_{k+1}, p_k),$ we have 
$$i_k=i_{k+1}-j_{k+1}+t_k.$$  Since $t_k\geq j_{k+1},$ it follows that $i_k\geq i_{k+1}.$  Thus we have 
$$i_1\geq i_2\geq\cdots,$$ and hence there exists $N\in\N$ such that $i_N=i_{N+1}=\cdots.$  Let $i=i_N.$  Then for $k\geq N$ we have $i=i-j_{k+1}+t_k,$ so $t_k=j_{k+1}.$  It follows that, for each $k\geq N,$ we have
$$a_k=a_{k+1}(m_k\theta^{j_{k+1}-p_k})\in a_{k+1}M.$$  
Hence, we have an ascending chain 
$$a_N\leq_Ma_{N+1}\leq_M\cdots$$
in $M.$  Since $M$ is $\arn$, there exists $N'\geq N$ such that $a_p\,\ar_M\,a_{N'}$ for all $p\geq N'.$  Therefore, for each $p\geq n$ there exists $m_p'\in M$ such that $a_p=a_{N'}m_p'.$  Then 
$$u_p=(i, a_p, j_p)=(i, a_{N'}, j_{N'})(j_{N'}, m_p', j_p)=u_{N'}(j_{N'}, m_p', j_p)\in u_{N'}N.$$
We conclude that $u_p\,\ar_N\,u_{N'}$ for all $p\geq N'.$  This completes the proof.
\end{proof}

\begin{cor}
\label{cor:embedding}
Every semigroup $S$ that is $\arn$ embeds into a simple semigroup that is $\arn$.
\end{cor}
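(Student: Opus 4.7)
The plan is to apply the Bruck-Reilly construction with a suitable endomorphism. Given an $\arn$ semigroup $S$, first I would observe that $S^1$ is also $\arn$: if $S$ already has an identity there is nothing to check, while otherwise the only principal right ideal of $S^1$ not arising from an element of $S$ is $1\cdot S^1 = S^1$ itself, which is maximum. Thus Green's preorder on $S^1$ extends that on $S$ simply by adjoining a top element, and the ACC is preserved.

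Let $\theta : S^1 \to S^1$ be the constant endomorphism with image $\{1_{S^1}\}$ (this is manifestly a monoid homomorphism), and set $N = BR(S^1, \theta)$. Theorem \ref{thm:BR} then gives immediately that $N$ is $\arn$. The map $s \mapsto (0, s, 0)$ is an injective homomorphism from $S^1$ into $N$, so $S$ embeds into $N$.

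The substantive step is to verify that $N$ is simple. The key feature of our choice of $\theta$ is that, since $\theta(1_{S^1}) = 1_{S^1}$, each iterate $\theta^k$ with $k \geq 1$ is itself the constant map with image $\{1_{S^1}\}$. Using this, a direct application of the multiplication formula in $BR(S^1, \theta)$ yields, for arbitrary $(i,a,j), (p,b,q) \in N,$
$$(p, 1_{S^1}, i+1)\,(i, a, j)\,(j+1, b, q) = (p, b, q),$$
so the two-sided ideal of $N$ generated by $(i, a, j)$ is all of $N$. Hence $N$ is a simple $\arn$ semigroup containing $S$, as required.

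The only real obstacle is identifying an endomorphism $\theta$ that collapses enough of $S^1$ to force simplicity of $N$ while keeping the framework of Theorem \ref{thm:BR} applicable. The constant endomorphism achieves both simultaneously, essentially because its iterates stabilise after one step.
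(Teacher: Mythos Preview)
Your proof is correct and follows essentially the same approach as the paper: both use the Bruck--Reilly extension $BR(S^1,\theta)$ with $\theta$ the constant map to $1_{S^1}$, invoke Theorem~\ref{thm:BR} for $\arn$, and embed $S$ via $s\mapsto(0,s,0)$. The only difference is that the paper cites \cite[Proposition 5.6.6(1)]{Howie} for simplicity, whereas you supply the explicit verification $(p,1,i{+}1)(i,a,j)(j{+}1,b,q)=(p,b,q)$.
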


\begin{proof}
Let $\theta : S^1\to S^1$ be the endomorphism given by $s\theta=1$ for all $s\in S^1$, and let $M=BR(S^1, \theta).$  Then $M$ is simple by \cite[Proposition 5.6.6(1)]{Howie}.  The monoid $S^1$ is $\arn$ since $S$ is, and hence $M$ is $\arn$ by Theorem \ref{thm:BR}.  We have already observed that $S^1$ is isomorphic to $\{0\}\times S^1\times\{0\}\subseteq M,$ and clearly $S$ embeds into $S^1,$ so we conclude that $S$ embeds into $M.$
\end{proof}

\section{\large{Semilattices of Semigroups}\nopunct}
\label{sec:sos}

Let $Y$ be a semilattice and let $(S_\a)_{\a\in Y}$ be a family of disjoint semigroups, indexed by $Y.$
If $S=\bigcup_{\a\in Y}S_\a$ is a semigroup such that $S_{\a}S_{\b}\subseteq S_{\a\b}$ for all $\a, \b\in Y,$
then $S$ is called a {\em semilattice of semigroups}, and we denote it by $S=\mathcal{S}(Y, S_\a).$  If, additionally, each $S_{\a}$ is a monoid, we call $S$ a {\em semilattice of monoids}.\par 
Now let $S=\bigcup_{\a\in Y}S_\a,$ and suppose that for each $\a, \b\in Y$ with $\a\geq\b$ there exists a homomorphism $\phi_{\a,\b} : S_{\a}\to S_{\b}.$  Furthermore, assume that:
\begin{itemize}
\item for each $\a\in Y,$ the homomorphism $\phi_{\a,\a}$ is the identity map on $S_{\a}$;
\item for each $\a, \b, \g\in Y$ with $\a\geq\b\geq\g$, we have $\phi_{\a, \b}\,\phi_{\b, \g}=\phi_{\a, \g}.$
\end{itemize}
For $a\in S_{\a}$ and $b\in S_{\b},$ we define
$$ab=(a\phi_{\a, \a\b})(b\phi_{\b, \a\b}).$$
With this multiplication, $S$ is a semilattice of semigroups.
In this case we call $S$ a {\em strong semilattice of semigroups} and denote it by $S=\mathcal{S}(Y, S_{\a}, \phi_{\a, \b}).$

We begin by investigating the behaviour of the property of being $\ar$-noetherian in the general setting of semilattices of semigroups.  Note that a semilattice is $\arn$ if and only if it satisfies the ascending chain condition on elements under its partial order.

\begin{defn}
Let $S=\S(Y,S_\a).$  We say that a chain $$\a_1\leq\a_2\leq\cdots$$ in $Y$ is $\ar${\em-witnessed} (with respect to $S$) if there exist $a_i\in S_{\a_i}$ ($i\in\N$) such that 
$$a_1\leq_Sa_2\leq_S\cdots.$$
\end{defn}

We note that if $S=\S(Y,S_\a)$ and $Y$ is $\arn,$ then certainly every $\ar$-witnessed chain in $Y$ with respect to $S$ eventually stabilises.  It turns out, however, that the converse does not hold in general.

\begin{lem}
\label{lem:sos}
Let $S=\S(Y,S_\a).$  If $S$ is $\arn,$ then every $\ar$-witnessed chain in $Y$ eventually stabilises and each $S_\a$ is $\arn$.
\end{lem}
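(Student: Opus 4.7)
The plan is to prove the two conclusions separately; both reduce to chasing elements through the semilattice decomposition and then invoking closure results from Section~\ref{sec:prelim}.

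For the stabilisation claim, take an $\ar$-witnessed chain $\a_1\leq\a_2\leq\cdots$ in $Y$, realised by $a_i\in S_{\a_i}$ with $a_1\leq_Sa_2\leq_S\cdots$ in $S$. Since $S$ is $\arn$, there exists $N\in\N$ with $a_n\,\ar_S\,a_N$ for all $n\geq N$. The key elementary observation is that if $x\in S_\b$, $y\in S_\g$ and $x\leq_Sy$, then $\b\leq\g$ in $Y$: either $x=y$ and $\b=\g$ (by disjointness of the components), or $x=ys$ for some $s\in S_\delta$, so $x\in S_{\g\delta}$, giving $\b=\g\delta\leq\g$. Applying this in both directions to the relation $a_n\,\ar_S\,a_N$ yields $\a_n=\a_N$ for all $n\geq N$, so the chain stabilises.

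For the second conclusion, fix $\a\in Y$ and set $T=\bigcup_{\b\geq\a}S_\b$. Since $Y$ is a (meet-)semilattice, one has $\b_1\b_2\geq\a$ if and only if $\b_1,\b_2\geq\a$; hence $T$ is a subsemigroup of $S$. For $s\in S_\g\subseteq S\setminus T$ (so $\g\not\geq\a$) and any $t\in S_\delta$, the products $st$ and $ts$ lie in $S_{\g\delta}$, and $\g\delta\not\geq\a$ (for otherwise $\g\geq\g\delta\geq\a$, contradicting $\g\not\geq\a$), so $st,ts\in S\setminus T$. Thus $S\setminus T$ is an ideal of $S$, and Corollary~\ref{cor:comp_left_ideal} yields that $T$ is $\arn$. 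Since $\a\b=\a$ whenever $\b\geq\a$, the component $S_\a$ is an ideal of $T$, so Proposition~\ref{prop:ideal} gives that $S_\a$ is $\arn$.

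The only conceptual wrinkle is identifying $T$ as the correct intermediate subsemigroup: $S_\a$ need not be an ideal of $S$ itself, because a component $S_\g$ with $\g$ incomparable to $\a$ can multiply into $S_\a$ via a meet strictly below $\a$, so a direct appeal to Proposition~\ref{prop:ideal} fails. With $T$ in hand, however, the rest is a routine two-step application of the inheritance lemmas of Section~\ref{sec:prelim}.
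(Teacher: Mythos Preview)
Your proof is correct. The first part (stabilisation of $\ar$-witnessed chains) matches the paper's argument, with the added benefit that you spell out why $a_n\,\ar_S\,a_N$ forces $\a_n=\a_N$.

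For the second part you take a route dual to the paper's. The paper works with the \emph{down-set}: it sets $I=\bigcup_{\b\leq\a}S_\b$, observes that $I$ is an ideal of $S$ (so $I$ is $\arn$ by Proposition~\ref{prop:ideal}), and then that $I\setminus S_\a=\bigcup_{\b<\a}S_\b$ is an ideal of $I$ (so $S_\a$ is $\arn$ by Corollary~\ref{cor:comp_left_ideal}). You instead work with the \emph{up-set} $T=\bigcup_{\b\geq\a}S_\b$, showing first that its complement is an ideal (Corollary~\ref{cor:comp_left_ideal}) and then that $S_\a$ is an ideal of $T$ (Proposition~\ref{prop:ideal}). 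The two arguments are mirror images, applying the same pair of inheritance results in opposite order; neither is more economical than the other, and your closing remark about why $S_\a$ is not directly an ideal of $S$ applies equally to both approaches.
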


\begin{proof}
Consider an $\ar$-witnessed chain $$\a_1\leq\a_2\leq\cdots$$ in $Y.$  Then there exist $a_i\in S_{\a_i}$ ($i\in\N$) such that
$$a_1\leq_Sa_2\leq_S\cdots.$$  
Since $S$ is $\arn$, there exists $n\in\N$ such that $a_n\,\ar_S\,a_N$ for all $n\geq N.$  This implies that $\a_n=\a_N$ for all $n\geq N,$ as required.

Now let $\a\in Y,$ and let $I=\bigcup_{\b\leq\a}S_{\b}.$  Then $I$ is an ideal of $S,$ so it is $\arn$ by Proposition \ref{prop:ideal}.  Since $I\!\setminus\!S_{\a}$ is an ideal of $I,$ it follows from Corollary \ref{cor:comp_left_ideal} that $S_{\a}$ is $\arn$.
\end{proof}

The following example shows that $\S(Y,S_\a)$ may not be $\arn$ even if $Y$ is finite and each $S_\a$ is $\arn.$  In particular, the converse of Lemma \ref{lem:sos} does not hold.

\begin{ex}
Let $S$ be the disjoint union of (a copy of) the free monogenic semigroup $\{x\}^+$ and a set $N=\{a_i : i\in\Z\}\cup\{0\}.$
Define a multiplication on $S,$ extending that on $\{x\}^+$, by
$$x^ia_j=a_jx^i=a_{j-i}\;\text{ and }\;x^i0=0x^i=uv=0\;(i\in\N, j\in Z, u, v\in N).$$
This multiplication turns $N$ into a null semigroup.  It is straightforward to show that, under this multiplication, $S$ is a semilattice of semigroups with structure semilattice $Y=\{1>0\}$ and corresponding components $\{x\}^+$ and $N.$  Certainly $S$ and $N$ are $\arn$.  On the other hand, it is easy to see that we have an infinite strictly ascending chain $$a_0<_Sa_1<_S\cdots$$ in $S,$ so that $S$ is not $\arn$.
\end{ex}

The following result provides a condition under which the converse of Lemma \ref{lem:sos} {\em does} hold. 
For this, recall that a semigroup is {\em weakly right noetherian} if it satisfies the ascending chain condition on right ideals.

\begin{prop}
Let $S=\S(Y, S_\a),$ and suppose that for each $\a\in Y$ the semigroup $S_{\a}$ contains no infinite antichain of $\ar_{S_\a}$-classes.  Then the following are equivalent:
\begin{enumerate}
\item $S$ is $\arn$;
\item every $\ar$-witnessed chain in $Y$ eventually stabilises and each $S_\a$ is $\arn$;
\item every $\ar$-witnessed chain in $Y$ eventually stabilises and each $S_\a$ is weakly right noetherian.
\end{enumerate}
\end{prop}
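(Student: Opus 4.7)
The plan is to establish (1) $\Rightarrow$ (2) $\Leftrightarrow$ (3) $\Rightarrow$ (1). The first implication is exactly Lemma \ref{lem:sos}. The equivalence (2) $\Leftrightarrow$ (3) is immediate from the characterisation of weakly right noetherian semigroups recalled in the introduction (\cite[Theorem 3.2]{Miller:2021}): under the standing hypothesis that each $S_\a$ contains no infinite antichain of $\ar_{S_\a}$-classes, being $\arn$ and being weakly right noetherian coincide for $S_\a$. The substantive work is therefore the implication (3) $\Rightarrow$ (1).

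For (3) $\Rightarrow$ (1), I would take an arbitrary ascending chain $a_1 \leq_S a_2 \leq_S \cdots$ with $a_i \in S_{\a_i}$ and first reduce it to a single component of the semilattice decomposition. Since $S$ is a semilattice of semigroups, $a_i \in a_{i+1} S^1$ forces $\a_i \leq \a_{i+1}$ in $Y$, and the sequence $(a_i)$ witnesses this chain in $Y$; by hypothesis it stabilises at some index $N$, so $a_n \in S_{\a_N}$ for all $n \geq N$.

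The crucial step is then to descend the $\ar_S$-chain into $S_{\a_N}$ itself. The naive attempt fails: writing $a_n = a_{n+1} s$ with $s \in S_\b$, one knows only that $\b \geq \a_N$, so $s$ need not lie in $S_{\a_N}$ and one cannot directly conclude $a_n \leq_{S_{\a_N}} a_{n+1}$. To sidestep this I would introduce the auxiliary sets $R_n := a_n S^1 \cap S_{\a_N}$ for $n \geq N$. A short check using $S_\b S_\g \subseteq S_{\b\g}$ shows that each $R_n$ is a right ideal of $S_{\a_N}$ containing $a_n$, and $a_n \in a_{n+1} S^1$ gives $R_n \subseteq R_{n+1}$. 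Because $S_{\a_N}$ is weakly right noetherian, the chain $(R_n)$ stabilises at some $M \geq N$; then $a_n \in R_n = R_M \subseteq a_M S^1$ for $n \geq M$ yields $a_n \leq_S a_M$, and combining with the original chain gives $a_n \,\ar_S\, a_M$.

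The main obstacle is precisely this transfer from a stabilised component-sequence to stabilisation of the original chain, since a $\leq_S$-relation between two elements of $S_{\a_N}$ need not be witnessed inside $S_{\a_N}$. The right-ideal trace $R_n$ records exactly the part of the principal right ideal $a_n S^1$ visible inside $S_{\a_N}$, and this is the point at which one genuinely needs the full ACC on right ideals of $S_{\a_N}$ rather than just on principal ones — an upgrade that is free under the standing antichain hypothesis but which would not otherwise be available.
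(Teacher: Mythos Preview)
Your argument is correct. The implication (1)$\Rightarrow$(2) via Lemma~\ref{lem:sos} and the equivalence (2)$\Leftrightarrow$(3) via \cite[Theorem 3.2]{Miller:2021} match the paper exactly. Your proof of (3)$\Rightarrow$(1), however, takes a genuinely different route from the paper's way of closing the cycle.

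The paper establishes (2)$\Rightarrow$(1) directly by a contrapositive combinatorial argument: given a strictly ascending chain $a_1<_Sa_2<_S\cdots$ eventually confined to a single $S_\a$, it applies the no-infinite-antichain hypothesis to the set $\{a_n : n\geq N\}$ to find pairs comparable under $\leq_{S_\a}$, and then, multiplying by the witnesses $b_{i,j}$ of $a_i=a_jb_{i,j}$ (which satisfy $S_\a b_{i,j}\subseteq S_\a$), iteratively extracts an infinite strictly ascending chain inside $S_\a$, contradicting that $S_\a$ is $\arn$.

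Your approach instead passes through (3) and uses the full ascending chain condition on right ideals of $S_{\a_N}$ via the trace ideals $R_n=a_nS^1\cap S_{\a_N}$. This is shorter and more transparent: the intersection device cleanly bypasses the obstacle that $a_n\leq_S a_{n+1}$ need not be witnessed inside $S_{\a_N}$, and stabilisation of $(R_n)$ immediately yields $a_n\,\ar_S\,a_M$. The paper's route, by contrast, works only with principal right ideals and the antichain hypothesis, so it makes (2)$\Rightarrow$(1) self-contained without invoking the weakly right noetherian property as an intermediate step; the price is a more delicate inductive extraction.
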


\begin{proof}
(1)$\Rightarrow$(2) follows from Lemma \ref{lem:sos}.

(2)$\Rightarrow$(1).  Suppose that every $\ar$-witnessed chain in $Y$ eventually stabilises but that $S$ is not $\arn$.  We need to prove that some $S_\a$ is not $\arn$.

Since $S$ is not $\arn$, there exists an infinite strictly ascending chain 
$$a_1<_Sa_2<_S\cdots$$
in $S.$  Let $a_i\in S_{\a_i},$ and for $i<j$ let $b_{i,j}\in S_{\b_{i, j}}$ be such that $a_i=a_jb_{i, j}$; then $\a_i=\a_j\b_{i,j}.$  Now, we have an $\ar$-witnessed chain 
$$\a_1\leq_Y\a_2\leq_Y\cdots$$ 
in $Y.$  By assumption, there exists $N\in\N$ such that $\a_n=\a_N$ for all $n\geq N.$  Then, letting $\a=\a_N$, we have $\a\b_{i,j}=\a$ for all $i, j\in\N$ with $N\leq i<j.$
 
Consider the set $\{a_n : n\geq N\}$ of elements of $S_\a$.  By assumption, this set cannot form an infinite antichain under the $\ar$-preorder on $S_\a$.  Also, we cannot have $a_j\leq_{S_\a}a_i$ for any $N\leq i<j,$ since this would contradict the fact that $a_i<_Sa_j$.  It follows that there exist $i_1, j_1\geq N$ with $i_1<j_1$ such that $a_{i_1}<_{S_\a}a_{j_1}$.  Now, either $i_1=N$ or
$$a_N=a_{i_1}b_{N,i_1}\in a_{j_1}(S_\a b_{N,i_1})\subseteq a_{j_1}S_{\a},$$
so $a_N<_{S_{\a}}a_{j_1}.$
Now consider the infinite set $\{a_n : n\geq j_1\}.$  By the same argument as above, there exists $j_2>j_1$ such that $a_{j_1}<_{S_\a}a_{j_2}$.
Continuing in this way, we obtain an infinite strictly ascending chain 
$$a_N<_{S_\a}a_{j_1}<_{S_\a}a_{j_2}<_{S_\a}\cdots$$
in $S_\a$, as required.

(2)$\Leftrightarrow$(3) follows from the fact that a semigroup is weakly right noetherian if and only if it is $\arn$ and contains no infinite antichain of $\ar$-classes \cite[Theorem 3.2]{Miller:2021}.
\end{proof}

The converse of Lemma \ref{lem:sos} also holds in the case that each $S_\a$ is $\ar$-preserving in $S.$

\begin{thm}\label{thm:sos,R-preserving}
Let $S=\S(Y, S_{\a})$ where each $S_\a$ is $\ar$-preserving in $S.$  Then $S$ is $\arn$ if and only if every $\ar$-witnessed chain in $Y$ eventually stabilises and each $S_\a$ is $\arn$.
\end{thm}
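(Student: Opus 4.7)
The forward direction is already supplied by Lemma \ref{lem:sos}, so the whole content of the theorem lies in the converse. My plan is to take an ascending chain in $S$, project it to an ascending chain of indices in $Y$, use the two hypotheses in sequence, and then lift the eventual stabilisation back to $S$ via the $\ar$-preserving property.

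In more detail: suppose every $\ar$-witnessed chain in $Y$ stabilises and each $S_\a$ is $\arn$, and let
$$a_1\leq_Sa_2\leq_S\cdots$$
with $a_i\in S_{\a_i}$. For each $i$ either $a_i=a_{i+1}$ or $a_i=a_{i+1}s_i$ with $s_i\in S_{\g_i}$ for some $\g_i\in Y$; in the latter case $a_i\in S_{\a_{i+1}\g_i}$, and since the $S_\b$ are disjoint, $\a_i=\a_{i+1}\g_i\leq\a_{i+1}$. Either way $\a_i\leq\a_{i+1}$, so
$$\a_1\leq\a_2\leq\cdots$$
is a chain in $Y$ that is $\ar$-witnessed by the $a_i$. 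By hypothesis there exists $N\in\N$ with $\a_n=\a_N$ for all $n\geq N$; set $\a:=\a_N$.

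For $n\geq N$ all the $a_n$ now lie in $S_\a$. Because $S_\a$ is $\ar$-preserving in $S$, the inequalities $a_n\leq_Sa_{n+1}$ are the same as $a_n\leq_{S_\a}a_{n+1}$, giving an ascending chain in $S_\a$. Since $S_\a$ is $\arn$, there exists $N'\geq N$ such that $a_n\,\ar_{S_\a}\,a_{N'}$ for all $n\geq N'$. Applying the $\ar$-preserving property once more (now in the other direction), we obtain $a_n\,\ar_S\,a_{N'}$ for all $n\geq N'$, so $S$ is $\arn$.

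The argument is essentially routine once one notes that the semilattice decomposition forces the indices to descend under $\leq_S$; the only real leverage is the $\ar$-preserving hypothesis, used to translate the problem to a single component and back. Consequently I anticipate no genuine obstacle, only the small verification that an ascending chain in $S$ really does induce an $\ar$-witnessed chain in $Y$, which is immediate from $S_{\a_{i+1}}S_{\g_i}\subseteq S_{\a_{i+1}\g_i}$.
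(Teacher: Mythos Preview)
Your proof is correct and follows essentially the same route as the paper's: cite Lemma~\ref{lem:sos} for the forward direction, then for the converse project an ascending chain in $S$ to an $\ar$-witnessed chain in $Y$, stabilise the indices, and use the $\ar$-preserving hypothesis to reduce to a chain in a single $S_\a$. One small remark: the final step, passing from $a_n\,\ar_{S_\a}\,a_{N'}$ to $a_n\,\ar_S\,a_{N'}$, does not actually need the $\ar$-preserving hypothesis ``in the other direction''---the inclusion $\leq_{S_\a}\subseteq\leq_S$ holds for any subsemigroup---so you may simply drop that justification.
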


\begin{proof}
The forward implication follows immediately from Lemma \ref{lem:sos}
For the converse, consider an ascending chain $$a_1\leq_Sa_2\leq_S\cdots$$ in $S.$  Let $a_i\in S_{\a_i}.$  Then 
$$\a_1\leq_Y\a_2\leq_Y\cdots$$
is an $\ar$-witnessed chain in $Y.$  By assumption, there exists $N\in\N$ such that $\a_n=\a_N$ for all $n\geq N.$  Let $\a=\a_N.$  Then $a_n\in S_\a$ for all $n\geq N.$  Since $S_\a$ is $\ar$-preserving in $S,$ we have
$$a_N\leq_{S_\a}a_{N+1}\leq_{S_\a}\cdots.$$
Since $S_{\a}$ is $\arn$, there exists $N^{\prime}\geq N$ such that $a_n\,\ar_{S_\a}\,a_{N^{\prime}}$ for all $n\geq N^{\prime}$.  Then $a_n\,\ar_S\,a_{N^{\prime}}$ for all $n\geq N^{\prime}$.  This completes the proof.
\end{proof}

In what follows we consider certain situations where we have $S=\S(Y, S_{\a})$ with all the $S_\a$ being $\ar$-preserving in $S.$  
The first such situation is where every $S_{\a}$ has the property that each element has a local right identity (in $S_{\a}$).  Recall that this holds if each $S_{\a}$ is a monoid or regular semigroup.

\begin{prop}\label{prop:sos,lri}
Let $S=\S(Y,S_{\a})$ where, for each $\a\in Y,$ every element of $S_{\a}$ has a local right identity in $S_{\a}$.  Then $S$ is $\arn$ if and only if every $\ar$-witnessed chain in $Y$ eventually stabilises and each $S_\a$ is $\arn$.
\end{prop}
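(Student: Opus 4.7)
My plan is to deduce this proposition directly from Theorem \ref{thm:sos,R-preserving} by verifying that the local-right-identity hypothesis forces each component $S_\a$ to be an $\ar$-preserving subsemigroup of $S$. Once that reduction is made, the conclusion is immediate.

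The key step is therefore the following claim: if every element of $S_\a$ has a local right identity in $S_\a$, then $\leq_{S_\a}$ coincides with the restriction of $\leq_S$ to $S_\a\times S_\a$. One inclusion is automatic. For the other, suppose $a,b\in S_\a$ satisfy $a\leq_S b$. If $a=b$ there is nothing to show, so assume $a=bs$ for some $s\in S$. Writing $s\in S_\b$, the product $bs$ lies in $S_{\a\b}$, so $\a\b=\a$, i.e.\ $\b\geq\a$ in $Y$. Now pick a local right identity $e\in S_\a$ for $a$, so that $a=ae$. Then
$$a=ae=(bs)e=b(se),$$
and $se\in S_\b S_\a\subseteq S_{\b\a}=S_\a$. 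Hence $a\in bS_\a$, witnessing $a\leq_{S_\a}b$.

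With $\ar$-preservation of each $S_\a$ established, Theorem \ref{thm:sos,R-preserving} gives precisely the stated equivalence. I do not foresee a substantive obstacle: the only ingredient beyond the earlier theorem is the manipulation $a=ae=b(se)$, which is where the local right identity is used to absorb the external factor $s$ into a factor from the correct component. The hypothesis is used in exactly one place, namely to justify the existence of $e\in S_\a$; without it one cannot in general push $s\in S_\b$ down to $S_\a$ while preserving the equation $a=bs$.
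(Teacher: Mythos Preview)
Your proposal is correct and follows essentially the same route as the paper: both reduce to Theorem \ref{thm:sos,R-preserving} by showing each $S_\a$ is $\ar$-preserving via a local-right-identity trick. The only cosmetic difference is that the paper uses a local right identity $c$ of $b$ (writing $a=bs=(bc)s=b(cs)$ with $cs\in S_\a S_\b\subseteq S_\a$), whereas you use a local right identity of $a$; both variants work equally well.
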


\begin{proof}
We show that each $S_\a$ is $\ar$-preserving, and the result then follows from Theorem \ref{thm:sos,R-preserving}.  So, let $\a\in Y,$ and let $a, b\in S_\a$ be such that $a\leq_Sb.$  Then $a=bs$ for some $s\in S^1.$  If $s=1$ then $a=b.$  Suppose that $s\in S.$  Then $s\in S_\b$ for some $\b\in Y,$ and we have $\a\b=\a.$  Let $c\in S_{\a}$ be a local right identity of $b,$ so that $b=bc.$  Then we have 
$$a=bs=(bc)s=b(cs)\in bS_\a,$$
so $a\leq_{S_\a}b,$ as required.
\end{proof}



A semigroup is called {\em completely regular} if it is a union of groups.  A semigroup is completely regular if and only if it is a semilattice of completely simple semigroups \cite[Theorem 4.1.3]{Howie}.  Completely simple semigroups are certainly $\arn$ by Corollary \ref{cor:min_right_ideals}.  Thus, by Proposition \ref{prop:sos,lri}, we have:

\begin{cor}\label{cor:cr}
Let $S$ be a completely regular semigroup, and let $S=\S(Y,S_\a)$ be its decomposition into a semilattice of completely simple semigroups.  Then $S$ is $\arn$ if and only if every $\ar$-witnessed chain in $Y$ eventually stabilises.
\end{cor}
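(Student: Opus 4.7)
The plan is to reduce the statement to a direct application of Proposition \ref{prop:sos,lri}. To do so, I need to verify the two hypotheses of that proposition for the given decomposition $S=\S(Y,S_\a)$ into completely simple components: namely, that every element of each $S_\a$ has a local right identity in $S_\a$, and that each $S_\a$ is $\arn$.

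For the first hypothesis, I would argue as follows. Since $S$ is completely regular, it is a union of groups, and each group $H$-class of $S$ is contained in a single component $S_\a$ (because $S_{\a}S_{\b}\subseteq S_{\a\b}$ forces an idempotent's $H$-class to lie inside its own component). Hence each element $a\in S_\a$ lies in a group inside $S_\a$, and the identity of that group serves as a local right identity of $a$ in $S_\a$. Alternatively, one can appeal to the fact that a completely simple semigroup is regular, so every element has a local right identity within its component. For the second hypothesis, I would invoke Corollary \ref{cor:min_right_ideals}: every completely simple semigroup is $\arn$, so each $S_\a$ is automatically $\arn$.

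With both hypotheses verified, Proposition \ref{prop:sos,lri} tells us that $S$ is $\arn$ if and only if every $\ar$-witnessed chain in $Y$ eventually stabilises and each $S_\a$ is $\arn$. Since the latter condition is automatic in our setting, the corollary collapses to the stated equivalence.

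Since this is essentially an application of previously established results, I do not anticipate any real obstacle. The only point that requires slight care is pinning down why every element of a completely simple component has a local right identity \emph{in that component} (as opposed to somewhere else in $S$), but this is immediate from the fact that idempotents of $S$ sit inside their own $\S(Y,S_\a)$-components together with regularity of each $S_\a$.
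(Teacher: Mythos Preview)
Your proposal is correct and matches the paper's argument essentially line for line: the paper also observes that each completely simple component is $\arn$ by Corollary~\ref{cor:min_right_ideals}, notes (implicitly via regularity) that every element has a local right identity in its component, and then applies Proposition~\ref{prop:sos,lri}. Your write-up is slightly more explicit about why the local right identity lies in $S_\a$, but the approach is identical.
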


The {\em free semilattice} on a non-empty set $X,$ which we denote by $F_X$, is defined as the set of all finite non-empty subsets of $X$ under the operation of union.  Clearly, for any $U, V\in F_X,$ we have $U\leq_{F_X}V$ if and only if $V\subseteq U.$  It follows that $F_X$ is $\arn$.  Both the free band on $X$ and free completely regular semigroup on $X$ are semilattices of semigroups where the structure semilattice is $F_X$; see \cite[p.\ 120]{Howie} and \cite[Corollary 4.3]{Clifford:1979}, respectively.  Therefore, by Corollary \ref{cor:cr}, we have:

\begin{cor}
Let $X$ be a non-empty set.  Then the following semigroups are $\arn$: the free semilattice on $X,$ the free band on $X,$ and the free completely regular semigroup on $X.$
\end{cor}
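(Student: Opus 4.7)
The plan is to handle each of the three semigroups separately, in each case reducing to results already established. For the free semilattice $F_X$, the excerpt has essentially already done the work: since $U \leq_{F_X} V$ holds precisely when $V \subseteq U$, an infinite strictly ascending chain $U_1 <_{F_X} U_2 <_{F_X} \cdots$ would correspond to an infinite strictly descending chain of finite subsets of $X$ under containment, which is impossible. Hence $F_X$ is $\arn$.

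For the free band on $X$ and the free completely regular semigroup on $X$, I would invoke Corollary \ref{cor:cr}. Both are completely regular: the latter is so by definition, and any band is a union of (necessarily trivial) subgroups, one for each idempotent. By the references cited immediately before the corollary, both of these semigroups admit decompositions as semilattices of completely simple semigroups whose structure semilattice is precisely the free semilattice $F_X$. Since $F_X$ is $\arn$ by the first step, every chain in $F_X$ eventually stabilises, so a fortiori every $\ar$-witnessed chain in $F_X$ (with respect to either decomposition) eventually stabilises. Corollary \ref{cor:cr} then yields that both the free band and the free completely regular semigroup on $X$ are $\arn$.

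There is no real obstacle here: the substantive content has been packaged into Corollary \ref{cor:cr}, and the corollary in question is a direct application of it combined with the elementary observation that the poset of finite non-empty subsets of $X$ under reverse inclusion satisfies the ascending chain condition.
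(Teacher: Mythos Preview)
Your proposal is correct and follows essentially the same approach as the paper: establish that $F_X$ is $\arn$ via the observation that $U\leq_{F_X}V$ iff $V\subseteq U$, note that both the free band and the free completely regular semigroup on $X$ are completely regular with structure semilattice $F_X$, and then apply Corollary~\ref{cor:cr}. The paper's argument is packaged into the paragraph preceding the corollary rather than given as a formal proof, but the content is the same.
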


\begin{cor}\label{cor:som}
Let $S=\S(Y,S_\a)$ be a semilattice of monoids such that $1_\a1_\b=1_{\a\b}$ for all $\a,\b\in Y$ (where $1_\a$ denotes the identity of $S_\a$).  Then $S$ is $\arn$ if and only if $Y$ and all $S_\a$ are $\arn.$
\end{cor}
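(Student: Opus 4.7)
The plan is to reduce the corollary to Proposition \ref{prop:sos,lri} and then use the hypothesis $1_\a1_\b=1_{\a\b}$ to upgrade ``every $\ar$-witnessed chain in $Y$ eventually stabilises'' into the stronger statement ``$Y$ is $\arn$''. Since every element of a monoid trivially has a local right identity, Proposition \ref{prop:sos,lri} applies directly and tells us that $S$ is $\arn$ if and only if every $\ar$-witnessed chain in $Y$ stabilises and each $S_\a$ is $\arn$. So the only non-routine content of the corollary is showing that the first of these two conditions is equivalent to $Y$ itself being $\arn$.

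One direction is immediate: if $Y$ is $\arn$, then any ascending chain in $Y$ -- in particular any $\ar$-witnessed one -- stabilises. For the converse, I would take an arbitrary ascending chain $\a_1\leq\a_2\leq\cdots$ in $Y$ and exhibit it as an $\ar$-witnessed chain, using the identities $1_\a$ of the components. Set $a_i=1_{\a_i}\in S_{\a_i}$. Since $\a_i\leq\a_{i+1}$ in the semilattice order means $\a_{i+1}\a_i=\a_i$, the assumption $1_\a 1_\b=1_{\a\b}$ gives
$$a_{i+1}\cdot a_i\;=\;1_{\a_{i+1}}1_{\a_i}\;=\;1_{\a_{i+1}\a_i}\;=\;1_{\a_i}\;=\;a_i,$$
so $a_i\leq_S a_{i+1}$ (with $a_i$ literally serving as a right multiple of $a_{i+1}$). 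Thus $\a_1\leq\a_2\leq\cdots$ is $\ar$-witnessed by $(a_i)_{i\in\N}$.

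Consequently, under the standing hypothesis every ascending chain in $Y$ is $\ar$-witnessed, so the condition ``every $\ar$-witnessed chain in $Y$ eventually stabilises'' coincides with ``$Y$ is $\arn$'' (recall from the discussion preceding Lemma \ref{lem:sos} that a semilattice is $\arn$ precisely when its partial order satisfies ACC). Substituting this into Proposition \ref{prop:sos,lri} yields exactly the statement of the corollary. There is no real obstacle here; the only thing to be careful about is checking that the product formula $1_\a 1_\b=1_{\a\b}$ is used with the right orientation so that the witnessing relation $a_i=a_{i+1}a_i$ points the correct way for $\leq_S$, which is handled by the computation above.
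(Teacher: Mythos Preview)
Your proof is correct and follows essentially the same approach as the paper: invoke Proposition \ref{prop:sos,lri} and then observe that every ascending chain in $Y$ is $\ar$-witnessed via the identities $1_{\a_i}$, using the computation $1_{\a_{i+1}}1_{\a_i}=1_{\a_{i+1}\a_i}=1_{\a_i}$. The paper compresses this to a single sentence, but the content is identical.
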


\begin{proof}
Given Proposition \ref{prop:sos,lri}, it suffices to show that every ascending chain of elements of $Y$ is $\ar$-witnessed with respect to $S.$  This follows from the fact that if $\a\leq\b$ then $1_\a=1_{\b\a}=1_\b1_\a,$ and hence $1_\a\leq 1_\b$.
\end{proof}

A {\em Clifford semigroup} is an inverse completely regular semigroup, or, equivalently, a (strong) semilattice of groups.  By Corollary \ref{cor:som} we have:

\begin{cor}
Let $S$ be a Clifford semigroup, and let $S=\S(Y,G_\a)$ be its decompostion into a semilattice of groups.  Then $S$ is $\arn$ if and only if $Y$ is $\arn$.
\end{cor}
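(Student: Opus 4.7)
The plan is to derive this corollary as an immediate consequence of Corollary \ref{cor:som} applied to the Clifford decomposition $S = \S(Y, G_\a)$. Two things need to be verified in order to invoke that corollary.

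First, each $G_\a$ is a group and hence trivially $\arn$: a group has a unique $\ar$-class (the whole group), so its poset of principal right ideals has one element. So the side condition ``each $G_\a$ is $\arn$'' in Corollary \ref{cor:som} is automatic and we lose nothing by dropping it.

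Second, I would verify the identity-compatibility condition $1_\a 1_\b = 1_{\a\b}$ for all $\a, \b \in Y$. This is where I would use the classical fact that a Clifford semigroup is a \emph{strong} semilattice of its maximal subgroups, so we may write $S = \S(Y, G_\a, \phi_{\a,\b})$ with group homomorphisms $\phi_{\a, \b} : G_\a \to G_\b$ whenever $\a \geq \b$. Then for any $\a, \b \in Y$,
$$1_\a 1_\b \;=\; (1_\a \phi_{\a, \a\b})\,(1_\b \phi_{\b, \a\b}) \;=\; 1_{\a\b} \cdot 1_{\a\b} \;=\; 1_{\a\b},$$
because a group homomorphism sends identity to identity.

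With both hypotheses of Corollary \ref{cor:som} in place, I can conclude that $S$ is $\arn$ if and only if $Y$ is $\arn$ and every $G_\a$ is $\arn$; and, since each $G_\a$ is automatically $\arn$, this reduces to the stated equivalence. The only non-routine ingredient is the structural fact that a Clifford semigroup decomposes as a strong semilattice of groups with group-homomorphism connecting maps, so there is essentially no obstacle beyond citing the standard description.
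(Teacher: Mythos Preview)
Your proposal is correct and follows exactly the route the paper takes: the corollary is stated immediately after Corollary~\ref{cor:som} with the words ``By Corollary~\ref{cor:som} we have,'' and you have simply filled in the routine verifications (each $G_\a$ is $\arn$, and $1_\a 1_\b = 1_{\a\b}$ via the strong semilattice structure) that the paper leaves implicit.
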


We now turn our attention to strong semilattices of semigroups.  

\begin{prop}\label{prop:ssos}
Let $S=\S(Y,S_\a,\phi_{\a,\b}).$  Then $S$ is $\arn$ if and only if every $\ar$-witnessed chain in $Y$ eventually stabilises and each $S_\a$ is $\arn.$
\end{prop}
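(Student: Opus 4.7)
The forward implication is immediate from Lemma \ref{lem:sos}, so the work lies entirely in the converse. My plan is to reduce this to Theorem \ref{thm:sos,R-preserving} by showing that, in a \emph{strong} semilattice of semigroups, each component $S_\a$ is automatically $\ar$-preserving in $S$. Once this is established, Theorem \ref{thm:sos,R-preserving} delivers the conclusion at once.

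To verify the $\ar$-preserving property, I would take $\a\in Y$ and elements $a,b\in S_\a$ with $a\leq_S b$, and show that $a\leq_{S_\a}b$. By definition there exists $s\in S^1$ with $a=bs$. The case $s=1$ gives $a=b$, so assume $s\in S_\b$ for some $\b\in Y$. Using the defining multiplication of a strong semilattice,
$$a=bs=(b\phi_{\a,\a\b})(s\phi_{\b,\a\b})\in S_{\a\b}.$$
Since $a\in S_\a$, this forces $\a\b=\a$, i.e.\ $\a\leq\b$ in $Y$. Then $\phi_{\a,\a\b}=\phi_{\a,\a}=\mathrm{id}_{S_\a}$, so the identity above simplifies to $a=b\cdot(s\phi_{\b,\a})$ with $s\phi_{\b,\a}\in S_\a$. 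Hence $a\in bS_\a^1$, giving $a\leq_{S_\a}b$, as required.

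With each $S_\a$ being $\ar$-preserving in $S$, Theorem \ref{thm:sos,R-preserving} applies directly, completing the proof. There is no real obstacle here: the only subtlety is recognising that the connecting homomorphism $\phi_{\b,\a}$ (which exists precisely because $\a\leq\b$ is forced by the product landing in $S_\a$) is exactly the tool that lets us convert a witness $s\in S_\b$ for $a\leq_S b$ into a witness $s\phi_{\b,\a}\in S_\a$ for $a\leq_{S_\a}b$. This feature distinguishes strong semilattices from general semilattices of semigroups, where the earlier example shows the analogous statement may fail.
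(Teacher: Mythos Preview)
Your proof is correct and follows essentially the same route as the paper: you show each $S_\a$ is $\ar$-preserving in $S$ by using the connecting homomorphism $\phi_{\b,\a}$ to convert a witness in $S_\b$ into one in $S_\a$, and then invoke Theorem~\ref{thm:sos,R-preserving}. The paper's argument is identical, only slightly terser in deducing $\a\b=\a$.
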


\begin{proof}
We show that each $S_\a$ is $\ar$-preserving, and the result then follows from Theorem \ref{thm:sos,R-preserving}.  So, let $\a\in Y,$ and let $a, b\in S_\a$ be such that $a\leq_Sb.$  Then $a=bs$ for some $s\in S^1.$  If $s=1$ then $a=b.$  Suppose that $s\in S.$  Then $s\in S_\b$ for some $\b\in Y,$ and we have $\a\b=\a.$  Thus, we have $a=b(s\phi_{\b,\a})\in b S_{\a}$, and hence $a\leq_{S_\a}b,$ as required.
\end{proof}

\begin{cor}\label{cor:ssos:nolri}
Let $S=\S(Y,S_\a,\phi_{\a,\b})$ where, for each $\a\in Y,$ $S_\a$ has no element with a local right identity.  Then $S$ is $\arn$ if and only if each $S_\a$ is $\arn$.
\end{cor}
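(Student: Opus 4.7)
The plan is as follows. The forward implication is immediate from Lemma \ref{lem:sos}, so I would concentrate on the converse. Assuming each $S_\a$ is $\arn$ and has no element with a local right identity, by Proposition \ref{prop:ssos} it suffices to show that every $\ar$-witnessed chain in $Y$ eventually stabilises.

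Suppose for contradiction that some $\ar$-witnessed chain does not stabilise. After passing to a subsequence I may assume the chain is strictly ascending, $\a_1<\a_2<\cdots$, with witnesses $a_i\in S_{\a_i}$ satisfying $a_i\leq_S a_{i+1}$ for every $i$. Since the $\a_i$ are distinct, $a_i\neq a_{i+1}$, so there exist $s_i\in S_{\b_i}$ (for some $\b_i\in Y$) with $a_i=a_{i+1}s_i$. This forces $\a_i=\a_{i+1}\b_i$, so in particular $\a_1\leq\a_i\leq\b_i$ for all $i$.

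Next I would push everything down to $S_{\a_1}$. Set $b_i=a_i\phi_{\a_i,\a_1}$ and $c_i=s_i\phi_{\b_i,\a_1}$. Expanding $a_{i+1}s_i$ via the strong-semilattice multiplication, both factors land in $S_{\a_i}$, and then the composition law $\phi_{\cdot,\a_i}\phi_{\a_i,\a_1}=\phi_{\cdot,\a_1}$ yields the clean relation $b_i=b_{i+1}c_i$ in $S_{\a_1}$. Thus $b_1\leq_{S_{\a_1}} b_2\leq_{S_{\a_1}}\cdots$ is an ascending chain in $S_{\a_1}$. The key hypothesis now kicks in: since $S_{\a_1}$ has no element with a local right identity, $\ar_{S_{\a_1}}$ is the identity relation (as noted in Section \ref{sec:prelim}). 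Hence $S_{\a_1}$ being $\arn$ forces the $b_i$ to be eventually constant, $b_n=b_N$ for all $n\geq N$. But then $b_N=b_{N+1}c_N=b_Nc_N$ exhibits $c_N\in S_{\a_1}$ as a local right identity of $b_N$, contradicting the hypothesis.

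The main obstacle is essentially bookkeeping: verifying that applying $\phi_{\a_i,\a_1}$ to the equation $a_i=a_{i+1}s_i$ really produces a relation that lives inside $S_{\a_1}$, with $c_i$ genuinely an element of $S_{\a_1}$. Once the strong-semilattice product is unpacked and the associativity of the transition maps is invoked, the rest of the argument is a short chase combining $\arn$-ness of $S_{\a_1}$ with the $\ar$-triviality forced by the absence of local right identities.
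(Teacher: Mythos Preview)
Your proposal is correct and follows essentially the same approach as the paper: both invoke Proposition~\ref{prop:ssos} to reduce to showing that every $\ar$-witnessed chain in $Y$ stabilises, then push the witnesses down to $S_{\a_1}$ via the transition maps, use that $S_{\a_1}$ is $\ar$-trivial and $\arn$ to force the images to be eventually constant, and extract a local right identity contradiction. The only cosmetic difference is that you first pass to a strictly ascending subsequence of the $\a_i$, whereas the paper keeps the original chain and handles the case $a_i=a_{i+1}$ separately; both routes yield the same contradiction.
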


\begin{proof}
Given Proposition \ref{prop:ssos}, it suffices to prove that if each $S_\a$ is $\arn$ then every $\ar$-witnessed chain in $Y$ eventually stabilises.  So, consider an $\ar$-witnessed chain $$\a_1\leq_Y\a_2\leq_Y\cdots.$$  Then there exist $a_i\in S_{\a_i}$ ($i\in\N$) such that
$$a_1\leq_Sa_2\leq_S\cdots.$$
By the definition of the product in $S,$ for each $i\in\N$ we have either $a_i=a_{i+1}$ or $a_i=(a_{i+1}\phi_{\a_{i+1},\a_i})s_i$ for some $s_i\in S_{\a_i}$.  If $a_i=a_{i+1}$ then $a_i\phi_{\a_i,\a_1}=a_{i+1}\phi_{\a_{i+1},\a_1}$, and if $a_i=(a_{i+1}\phi_{\a_{i+1},\a_i})s_i$ then
$$a_i\phi_{\a_i,\a_1}=(a_{i+1}\phi_{\a_{i+1},\a_1})(s_i\phi_{\a_i,\a_1}).$$ 
Thus, letting $T=S_{\a_1},$ we have an ascending chain $$a_1=a_1\phi_{\a_1,\a_1}\leq_Ta_2\phi_{\a_2,\a_1}\leq_T\cdots$$
in $T.$  Since $T$ is $\ar$-trivial (as it has no element with a local right identity) and $\arn$, there exists $N\in\N$ such that $a_n\phi_{\a_n,\a_1}=a_N\phi_{\a_N,\a_1}$ for all $n\geq N.$  We claim that $\a_n=\a_N$ for all $n\geq N.$  Indeed, suppose not. Then there exists $n\geq N$ such that $\a_n\neq\a_{n+1}$, and hence $a_n\neq a_{n+1}.$  Then we have $$a_n\phi_{\a_n,\a_1}=(a_{n+1}\phi_{\a_{n+1},\a_1})(s_n\phi_{\a_n,\a_1}).$$  But then $$a_N\phi_{\a_N,\a_1}=(a_N\phi_{\a_N,\a_1})(s_n\phi_{\a_n,\a_1})\in(a_N\phi_{\a_N,\a_1})T,$$ contradicting the fact that $T$ has no element with a local right identity.  This completes the proof.
\end{proof}

We conclude this section with an example of a strong semilattice of semigroups that is $\arn$ but whose structure semilattice is not $\arn$. 

\begin{ex}
Let $Y$ be the semilattice $(\N,\text{min}).$  Then $Y$ is not $\arn.$  For each $i\in\N,$ let $S_i$ be the semilattice $\{i\}\times\N$ with multiplication 
$$(i,j)(i,k)=\big(i,\max(j,k)\big)\:\text{for all}\:j,k\in\N.$$
Then each $S_i$ is isomorphic to $(\N,\text{max}),$ which is certainly $\ar$-noetherian.
For each $i,j\in\N$ with $i\geq j,$ define a map 
$$\phi_{i,j} : S_i\to S_j, (i,n)\mapsto(j,n+i-j).$$
For any $m,n\in\N,$ we have
\begin{align*}
\big((i,m)(i,n)\big)\phi_{i,j}&=\big(i,\max(m,n)\big)\phi_{i,j}=\big(j,\max(m,n)+i-j\big)\\
&=\big(j,\max(m+i-j,n+i-j)\big)=(j,m+i-j)(j,n+i-j)\\
&=\big((j,m)\phi_{i,j}\big)\big((j,n)\phi_{i,j}\big),
\end{align*}
so $\phi_{i,j}$ is a homomorphism.  Let $S$ be the strong semilattice of semilattices $\S(Y,S_i,\phi_{i,j}).$  Then $S=\N\times\N,$ and for any $(i,m),(j,n)\in S$ we have
\begin{align*}
(i,m)(j,n)&=\big((i,m)\phi_{i,\min(i,j)}\big)\big((j,n)\phi_{j,\min(i,j)}\big)\\
&=\big(\min(i,j),m+i-\min(i,j)\big)\big(\min(i,j),n+j-\min(i,j)\big)\\
&=\big(\min(i,j),\max(m+i-\min(i,j),n+j-\min(i,j))\big).
\end{align*}
It is easy to see that this product is commutative.  Therefore, since each element of $S$ is an idempotent, we have that $S$ is a semilattice.  We claim that $S$ is $\arn$.  So, consider an ascending chain $$(i_1,n_1)\leq_S(i_2,n_2)\leq_S\cdots$$ in $S.$  Since $S$ is a semilattice, for each $j\in\N$ we have $(i_j,n_j)=(i_{j+1},n_{j+1})(i_j,n_j).$  Thus, we have $i_j=\min(i_{j+1},i_j)$ and $n_j=\max(i_{j+1}+n_{j+1}-i_j, n_j).$
It follows that
$$i_1\leq i_2\leq\cdots\qquad\text{and}\qquad n_1\geq n_2\geq\cdots,$$
and if $i_j<i_{j+1}$ then $n_j>n_{j+1}$.  We conclude that there exists $N\in\N$ such that $i_m=i_N$ and $n_m=n_N$ for all $m\geq N,$ and hence $(i_m,n_m)=(i_N,n_N)$ for all $m\geq N,$ as required.
\end{ex}

\section*{Funding}
This work was supported by the Engineering and Physical Sciences Research Council [EP/V002953/1].

\section*{Acknowledgements}
The author thanks the referee for a careful reading of the paper and for a number of helpful suggestions, especially those that led to Theorem \ref{thm:sos,R-preserving} and Proposition \ref{prop:sos,lri}.


\vspace{1em}


\begin{thebibliography}{99}
\bibitem{Aubert}
K. Aubert.  On the ideal theory of commutative semi-groups.  \textit{Math. Scandinavica}, 1:39-54, 1953.
\bibitem{Bruck}
R. Bruck.  \textit{A Survey of Binary Systems}, Springer, Berlin, 1958.
\bibitem{Clifford:1979}
A. Clifford.  The free completely regular semigroup on a set.  \textit{J. Algebra}, 59:434-451, 1979.
\bibitem{C&P}
A. Clifford and G. Preston.  \textit{The Algebraic Theory of Semigroups: Volume 2}.  American Math. Soc., 1967.
\bibitem{Davvaz}
Bijan Davvaz and Zahra Nazemian.  Chain conditions on commutative monoids.  \textit{Semigroup Forum}, 100:732-742.
\bibitem{Howie}
J. Howie.  \textit{Fundamentals of Semigroup Theory}.  Oxford University Press, 1995.
\bibitem{Jespers}
E. Jespers and J. Okni{\'n}ski.  Noetherian semigroup algebras.  \textit{J. Algebra}, 218:543-564, 1999.
\bibitem{Liu}
Z. Liu.  The ascending chain condition for principal ideals of rings of generalized power series.  \textit{Comm. Algebra}, 32:3305-3314, 2004.
\bibitem{Margolis}
S. Margolis and J. E. Pin.  Expansions, free inverse semigroups, and Sch{\"u}tzenberger product.  \textit{J. Algebra}, 110:298-305, 1987.
\bibitem{Mazurek}
R. Mazurek and M. Ziembowski.  The ascending chain condition for principal left or right ideals of skew generalized power series rings.  \textit{J. Algebra}, 322:983-994, 2009.
\bibitem{Miller:2021} 
C. Miller.  Semigroups whose right ideals are finitely generated.  \textit{Proc. Edinburgh Math. Soc.} 64:848-883, 2021.
\bibitem{Miller:2023}
C. Miller.  Ascending chain conditions on right ideals of semigroups.  \textit{Comm. Algebra}, accepted.
\bibitem{Munn}
W. Munn.  On simple inverse semigroups.  \textit{Semigroup Forum}, 1:63-74, 1970.
\bibitem{Noether}
E. Noether.  Idealtheorie in Ringbereichen.  \textit{Math. Annalen}, 83:24-66, 1921.
\bibitem{Pin}
J. E. Pin.  Algebraic tools for the concatenation product.  \textit{Theoret. Comp. Sci.}, 292:317–342, 2003.
\bibitem{Reilly}
N. Reilly.  Bisimple $\omega$-semigroups.  \text{Proc. Glasgow Math. Association}, 7:160-167, 1966.
\bibitem{Schutz}
M. Sch{\"u}tzenberger.  On finite monoids having only trivial subgroups.  \textit{Information and Control,} 8:190-194, 1965.
\bibitem{Stopar}
N. Stopar.  Ascending chain conditions on principal left and right ideals for semidirect products of semigroups.  \textit{Semigroup Forum}, 85:322-336, 2012.
\end{thebibliography}
\end{document}